\newcommand{\rmM}{{\rm M}}
\newcommand{\tspan}{\text{span}}
\newcommand{\Res}{{\rm Res}}
\newcommand{\Tr}{{\rm Tr}}
\newcommand{\Gr}{{\rm Gr}}
\newcommand{\SO}{{\rm SO}}
\newcommand{\GU}{{\rm GU}}
\newcommand{\Sh}{{\rm Sh}}
\newcommand{\QQ}{\mathbb{Q}}
\newcommand{\calF}{\mathcal{F}}
\newcommand{\calM}{\mathcal{M}}
\newcommand{\calI}{\mathcal{I}}
\newcommand{\calJ}{\mathcal{J}}
\newcommand{\calU}{\mathcal{U}}
\newcommand{\bb }{\langle}
\newcommand{\pp}{\rangle}
\newcommand{\rightarroweq}{\stackrel{\sim}{\rightarrow}}
\numberwithin{equation}{subsection}
\newtheorem{Theorem}{Theorem}[section]
\newtheorem{Remark}[Theorem]{Remark}
\newtheorem{Remarks}[Theorem]{Remarks}
\newtheorem{Lemma}[Theorem]{Lemma}
\newtheorem{Proposition}[Theorem]{Proposition}
\newtheorem{Corollary}[Theorem]{Corollary}
\newtheorem{Main Theorem}[Theorem]{Main Theorem}
\renewcommand*\env@matrix[1][*\c@MaxMatrixCols c]{%
  \hskip -\arraycolsep
  \let\@ifnextchar\new@ifnextchar
  \array{#1}}
\newif\ifgrading
\newcommand{\U}{{\mathcal U}}
\newcommand{\calG}{{\mathcal G}}
\newcommand{\calO}{{\mathcal O}}
\newcommand{\Mloc}{{\rm M}^{\rm loc}}
\newcommand{\Spec}{{\rm Spec \, } }
\newcommand{\wti}{\widetilde}
\newcommand{\und}{\underline}
\newcommand{\Addresses}{{
		\bigskip
		\footnotesize
		
		\textsc{Department of Mathematics, Universität Münster, Münster, 48149, Germany}\par\nopagebreak
		\textit{E-mail address:} \texttt{io.zachos@uni-muenster.de}\\

		\textsc{Department of Applied Mathematics,
University of Science and Technology Beijing, Beijing, 100083, China}\par\nopagebreak
		\textit{E-mail address:} \texttt{zhihaozhao@ustb.edu.cn}
		
}}	
\begin{document}

\title[Semi-Stable models for ramified Unitary Shimura Varieties]{Semi-stable and splitting models for unitary Shimura varieties over ramified
places. I.} 
	\date{}
	\author{I. Zachos and Z. Zhao}
	\maketitle

	\begin{abstract}
 	We consider Shimura varieties associated to a unitary group 
  of signature $(n-s,s)$ where $n$ is even. For these varieties, we construct smooth $p$-adic integral models for $s=1$ and regular $p$-adic integral models for $s=2$ and $s=3$ over odd primes $p$ which ramify in the imaginary quadratic field with level subgroup at $p$ given by the stabilizer of a $\pi$-modular lattice in the hermitian space. Our construction,  which has an explicit moduli-theoretic description, is given by an explicit resolution of a corresponding local model.
  
	\end{abstract}
	
	\tableofcontents

\section{Introduction}\label{Intro}

\subsection{} 

The main objective of this paper is to construct regular integral models for some Shimura varieties over places of bad reduction. 
Results of this type have already been obtained by  Rapoport-Zink \cite{RZbook}, Pappas \cite{P_ICM}, and He-Pappas-Rapoport \cite{HPR} in specific instances.
Here, we consider Shimura varieties associated to unitary similitude groups of signature $(r,s)$ over an imaginary quadratic field $K$. These Shimura varieties are moduli spaces of abelian varieties with polarization, endomorphisms and level structure (Shimura varieties of PEL type). Shimura varieties have canonical models over the “reflex” number field $E$ and in the cases we consider the reflex field is the field of rational numbers $\mathbb{Q}$ if $r = s$ and $E = K$ otherwise \cite[\S 3]{P}.

Constructing such well-behaved integral models is an interesting and challenging problem, with several applications to number theory.
The behavior of these depends very much on the “level subgroup”. Here, the level subgroup is the stabilizer of a $\pi$-modular lattice in the hermitian space. (
This is a lattice which is equal to its dual times a uniformizer). For this level subgroup, and the signatures $(n-s,s)$ with $n$ even and $s \leq 3$, we show that there is a good (semi-stable) model.
 
 By using work of Rapoport-Zink \cite{RZbook} and Pappas \cite{P} we first construct $p$-adic integral models, which have simple and explicit moduli descriptions but are not always flat. These models are \'etale locally around each point isomorphic to certain simpler schemes the \textit{naive local models}. Inspired by the work of Pappas-Rapoport \cite{PR2} and Krämer \cite{Kr}, we consider a variation of the above moduli problem 
 where we add in the moduli problem an additional subspace in the deRham filtration $ {\rm Fil}^0 (A) \subset H_{dR}^1(A)$ of the universal abelian variety $A$, which satisfies certain conditions. This is essentially an instance of the notion of a ``linear modification" introduced in \cite{P}. Then for the signatures $(n-s,s)$, with $s \leq 3$, we calculate the flat closure of these models and we show that they are smooth when $s=1$ and have semi-stable reduction when $s=2$ or $s=3$, i.e. they are regular and the irreducible components of the special fiber are smooth divisors crossing normally. Moreover, we want to mention that we obtain a moduli description of this flat closure.
 We anticipate that our construction will have applications to the study of arithmetic intersections of special cycles and Kudla’s program. (See for example, \cite{Zhang}, \cite{BHKR} and \cite{HLSY}, for some works in this direction.)

\subsection{} Let us introduce some notation first. Recall that $K$ is an imaginary quadratic field and we fix an embedding $\varepsilon: K\rightarrow \mathbb{C}$. Let $W$ be a $n$-dimensional $K$-vector space, equipped with a non-degenerate hermitian form $\phi$. Consider the group $G = \GU_n$ of unitary similitudes for $(W,\phi)$ of dimension $n\geq 3$ over $K$. We fix a conjugacy class of homomorphisms $h: \Res_{\mathbb{C}/\mathbb{R}}\mathbb{G}_{m,\mathbb{C}}\rightarrow \GU_n$ corresponding to a Shimura datum $(G,X_h)$ of signature $(r,s)$. Set $X = X_h$. The pair $(G,X)$ gives rise to a Shimura variety $Sh(G,X)$ over the reflex field $E$ (see \cite[\S 1.1]{PR} for more details). Let $p$ be an odd prime number which ramifies in $K$. Set $K_1=K\otimes_{\QQ} \QQ_p$ with a uniformizer $\pi$, and $V=W\otimes_{\QQ} \QQ_p$. 
We assume that the hermitian form $\phi$ is split on $V$, i.e. there is a basis $e_1, \dots, e_n$ such that $\phi(e_i, e_{n+1-j})=\delta_{ij}$ for $1\leq i, j\leq n$. We denote by 
\[
\Lambda_i = \text{span}_{O_{K_1}} \{\pi^{-1}e_1, \dots, \pi^{-1}e_i, e_{i+1}, \dots, e_n\}
\]
the standard lattices in $V$. We can complete this into a self dual lattice chain by setting $\Lambda_{i+kn}:=\pi^{-k}\Lambda_i$ (see \S \ref{Prel.1}).

By \cite{PR}, there are 3 different types of the special maximal parahoric subgroups of $\GU_n$ depending on the parity of $n$. More precisely, 
consider the stabilizer subgroup
\[
P_I:=\{g\in\GU_n\mid g\Lambda_i=\Lambda_i , \quad \forall i \in I\},
\]
where $I$ is a nonempty subset of $\{0,\dots, \lfloor n/2\rfloor\}$. When $n=2m+1$ is odd, the special maximal parahoric subgroups are conjugate to the stabilizer subgroup $P_{\{0\}}$ or $P_{\{m\}}$. When $n=2m$ is even, the special maximal parahoric subgroups are conjugate to the stabilizer subgroup $P_{\{m\}}$. In this article, we consider the special maximal parahoric subgroup $P_{\{m\}}$ when $n=2m$ is even. We intend to take up the case $n=2m+1, I=\{m\}$ in a subsequent work.

To explain our results, we assume $n=2m$, and $(r,s)=(n-1,1)$ or $(n-2,2)$ or $(n-3,3)$. Denote by $P_{\{m\}}$ the stabilizer of $\Lambda_m$ in $G(\mathbb{Q}_p)$. We let $\mathcal{L}$ be the
self-dual multichain consisting of lattices $\{\Lambda_j\}_{j\in n\mathbb{Z} \pm m}$. Here $\mathcal{G} = \underline{{\rm Aut}}(\mathcal{L})$ is the (smooth) group scheme over $\mathbb{Z}_p$ with $P_{\{m\}} = \mathcal{G}(\mathbb{Z}_p)$ the subgroup of $G(\mathbb{Q}_p)$ fixing the lattice chain $\mathcal{L}$.   

Choose also a sufficiently small compact open subgroup $K^p$ of the prime-to-$p$ finite adelic points $G({\mathbb A}_{f}^p)$ of $G$ and set $\mathbf{K}=K^pP_{\{m\}}$. The Shimura variety  ${\rm Sh}_{\mathbf{K}}(G, X)$ with complex points
 \[
 {\rm Sh}_{\mathbf{K}}(G, X)(\mathbb{C})=G(\mathbb{Q})\backslash X\times G({\mathbb A}_{f})/\mathbf{K}
 \]
is of PEL type and has a canonical model over the reflex field $E$. We set $\mathcal{O} = O_{E_v}$ where $v$ the unique prime ideal of $E$ above $(p)$.

We consider the moduli functor $\mathcal{A}^{\rm naive}_{\mathbf{K}}$ over $\Spec \mathcal{O} $ given in \cite[Definition 6.9]{RZbook}:\\
A point of $\mathcal{A}^{\rm naive}_{\mathbf{K}}$ with values in the 
$\mathcal{O}  $-scheme $S$ is the isomorphism class of the following set of data $(A,\iota,\bar{\lambda}, \bar{\eta})$:
\begin{enumerate}
   \item An object $(A,\iota)$, where $A$ is an abelian scheme with relative dimension $n$ over $S$ (terminology
of \cite{RZbook}), compatibly endowed with an
action of $\calO$: 
\[ \iota: \calO \rightarrow \text{End} \,A \otimes \mathbb{Z}_p.\]
    \item A $\mathbb{Q}$-homogeneous principal polarization $\bar{\lambda}$ of the $\mathcal{L}$-set $A$.
    \item A $K^p$-level structure
    \[
\bar{\eta} : H_1 (A, {\mathbb A}_{f}^p) \simeq W \otimes  {\mathbb A}_{f}^p \, \text{ mod} \, K^p
    \]
which respects the bilinear forms on both sides up to a constant in $({\mathbb A}_{f}^p)^{\times}$ (see loc. cit. for
details). The set $A$ should satisfy the determinant condition (i) of loc. cit. which depends on $(r,s)$.
\end{enumerate}

For the definitions of the terms employed here we refer to loc.cit., 6.3–6.8 and \cite[\S 3]{P}. The functor $\mathcal{A}^{\rm naive}_{\mathbf{K}}$ is representable by a quasi-projective scheme over $\mathcal{O}$. Since the Hasse principle is satisfied for the unitary group, we can see as in loc. cit. that there is a natural isomorphism
\[
\mathcal{A}^{\rm naive}_{\mathbf{K}} \otimes_{\calO} E_v = {\rm Sh}_{\mathbf{K}}(G, X)\otimes_{E} E_v.
\]

The moduli scheme $\mathcal{A}^{\rm naive}_{\mathbf{K}}$ is connected to the naive local model ${\rm M}^{\rm naive}$ (see \S \ref{LocalModelVariants} for the explicit definition). As is explained in \cite{RZbook} and \cite{P} the naive local model  is connected to the moduli scheme $\mathcal{A}^{\rm naive}_{\mathbf{K}}$ via the local model diagram 
\[
\begin{tikzcd}
&\wti{\mathcal{A}}^{\rm naive}_{\mathbf{K}}(G, X)\arrow[dl, "\psi_1"']\arrow[dr, "\psi_2"]  & \\
\mathcal{A}^{\rm naive}_{\mathbf{K}}  && {\rm M}^{\rm naive}
\end{tikzcd}
\]
where the morphism $\psi_1$ is a $\mathcal{G}$-torsor and $\psi_2$ is a smooth and $\mathcal{G}$-equivariant morphism. In particular, since $\calG$ is smooth, the above imply that $\mathcal{A}^{\rm naive}_{\mathbf{K}} $ is \'etale locally isomorphic to ${\rm M}^{\rm naive}$. From \cite[Remark 2.6.10]{PRS}, we have that the naive local model is never flat  and by the above the
same is true for $\mathcal{A}^{\rm naive}_{\mathbf{K}} $. Denote by $ \mathcal{A}^{\rm flat}_{\mathbf{K}} $ the flat closure of ${\rm Sh}_{\mathbf{K}}(G, X)\otimes_{E} E_v$ in $ \mathcal{A}^{\rm naive}_{\mathbf{K}}$. As in \cite{PR}, there is a relatively representable smooth morphism of relative dimension ${\rm dim} (G)$,
\[
\mathcal{A}^{\rm flat}_{\mathbf{K}} \to [\mathcal{G} \backslash \Mloc]
\]
where the local model $\Mloc $ is defined as the flat closure of $ {\rm M}^{\rm naive} \otimes_{\mathcal{O}} E_v$ in ${\rm M}^{\rm naive}$. This of course implies that $\mathcal{A}^{\rm flat}_{\mathbf{K}}$ is \'etale locally isomorphic to the local model $\Mloc$.

We now consider a variation of the moduli of abelian schemes $\mathcal{A}_{\mathbf{K}}$ where we add in the moduli problem an additional subspace in the Hodge filtration $ {\rm Fil}^0 (A) \subset H_{dR}^1(A)$ of the universal abelian variety $A$ (see \cite[\S 6.3]{H} for more details) with certain conditions to imitate the definition of the naive splitting model $\mathcal{M}$ (we refer to \S \ref{LocalModelVariants} for the explicit definition). 
There is a projective forgetful 
morphism
\[
\tau_1 :   \mathcal{A}_{\mathbf{K}} \longrightarrow \mathcal{A}^{\rm naive}_{\mathbf{K}}\otimes_{\mathcal{O}} O_{K_1}\]
which induces an isomorphism over the generic fibers (see \S \ref{Shimura}). Moreover, $\mathcal{A}_{\mathbf{K}}$ has the same \'etale local structure as $\mathcal{M}$; it is a ``linear modification" of $\mathcal{A}^{\rm naive}_{\mathbf{K}}\otimes_{\mathcal{O}} O_{K_1}$ in the sense of \cite[\S 2]{P} (see also \cite[\S 15]{PR2}). Note, that there is also a corresponding projective forgetful 
morphism
\[
\tau :   \mathcal{M} \longrightarrow  {\rm M}^{\rm naive} \otimes_{\mathcal{O}} O_{K_1}
\]
which induces an isomorphism over the generic fibers (see \S \ref{LocalModelVariants}).
In \S \ref{spl}, we show that $\calM$ is not flat for any signature $(r,s)$ and the same is true for $ \mathcal{A}_{\mathbf{K}}$. In fact, $\calM$ is not topologically flat since there exists a point which cannot lift to characteristic zero (see 
Proposition \ref{naive.notflat}). This type of phenomenon is similar to what already has been observed for
this level subgroup for the wedge local models in \cite[Remark 5.3]{PR} (see \S \ref{LocalModelVariants} for the definition of wedge local models). Consider the closed subscheme $\mathcal{A}^{\rm spl}_{\mathbf{K}} \subset \mathcal{A}_{\mathbf{K}} $ defined as $\mathcal{A}^{\rm spl}_{\mathbf{K}} :=\tau_1^{-1} (\mathcal{A}^{\rm flat}_{\mathbf{K}}) $. It is a linear modification of $\mathcal{A}^{\rm naive}_{\mathbf{K}}\otimes_{\mathcal{O}} O_{K_1}$.


\subsection{} One of the main results of the present paper is the following
theorem.

 \begin{Theorem}
Assume that $ (r,s) = (n-1,1)$ or $(n-2,2)$ or $(n-3,3)$. For every $K^p$ as above, there is a
 scheme $\mathcal{A}^{\rm spl}_{\mathbf{K}}$, flat over $\Spec(O_{K_1})$, 
 with
 \[
\mathcal{A}^{\rm spl}_{\mathbf{K}}\otimes_{O_{K_1}} K_1={\rm Sh}_{\mathbf{K}}(G, X)\otimes_{E} K_1,
 \]
 and which supports a local model diagram
  \begin{equation}\label{LMdiagramRegIntro}
\begin{tikzcd}
&\wti{\mathcal{A}}^{\rm spl}_{\mathbf{K}}(G, X)\arrow[dl, "\pi^{\rm reg}_K"']\arrow[dr, "q^{\rm reg}_K"]  & \\
\mathcal{A}^{\rm spl}_{\mathbf{K}}  &&  {\rm M}^{\rm spl}
\end{tikzcd}
\end{equation}
such that:
\begin{itemize}
\item[a)] $\pi^{\rm reg}_{\mathbf{K}}$ is a $\mathcal{G}$-torsor for the parahoric group scheme $\calG$ that corresponds to $P_{\{m\}}$.

\item[b)] $q^{\rm reg}_{\mathbf{K}}$ is smooth and $\calG$-equivariant.

\item[c)] When $(r,s) = (n-1,1)$, $\mathcal{A}^{\rm spl}_{\mathbf{K}}$ is a smooth scheme. 
\item[c')] When $(r,s) = (n-2,2)$ or $(r,s) =(n-3,3)$, $\mathcal{A}^{\rm spl}_{\mathbf{K}}$ has semi-stable reduction. In particular, $\mathcal{A}^{\rm spl}_{\mathbf{K}}$ is regular and has special fiber which is a reduced divisor with
normal crossings.
\end{itemize}
 \end{Theorem}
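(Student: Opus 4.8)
The strategy is to reduce all four assertions to the corresponding statements for a splitting local model ${\rm M}^{\rm spl}$, in the same way that the structure of $\mathcal{A}^{\rm naive}_{\mathbf{K}}$ was reduced to that of ${\rm M}^{\rm naive}$ above. Set ${\rm M}^{\rm spl} := \tau^{-1}(\Mloc) \subset \mathcal{M}$, the analogue of $\mathcal{A}^{\rm spl}_{\mathbf{K}} = \tau_1^{-1}(\mathcal{A}^{\rm flat}_{\mathbf{K}})$. The heart of the argument is then purely local: one must show (i) ${\rm M}^{\rm spl}$ is flat over $O_{K_1}$ with generic fiber $\Mloc \otimes_{O_{K_1}} K_1$; (ii) ${\rm M}^{\rm spl}$ is smooth over $O_{K_1}$ when $s=1$; and (iii) ${\rm M}^{\rm spl}$ has semi-stable reduction when $s = 2,3$. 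To do this I would fix the standard affine chart of $\Mloc$ attached to the vertex lattice $\Lambda_m$, parametrize the Hodge filtration by an explicit matrix subject to the Kottwitz determinant condition together with the lattice-chain and duality conditions, adjoin the extra $O_{K_1}$-stable subspace of $\mathrm{Fil}^0$ defining $\mathcal{M}$, and write out the resulting equations; identifying ${\rm M}^{\rm spl}$ with an explicit blow-up of $\Mloc$ along its non-smooth locus then makes (ii) and (iii) a matter of reading off the local equations.

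Granting this, the local model diagram \eqref{LMdiagramRegIntro} is produced by transport of structure. Since $\mathcal{A}^{\rm spl}_{\mathbf{K}}$ is, by construction, a linear modification of $\mathcal{A}^{\rm naive}_{\mathbf{K}} \otimes_{\mathcal{O}} O_{K_1}$ in the sense of \cite[\S 2]{P}, and linear modifications are compatible with $\mathcal{G}$-torsors and with smooth morphisms, the torsor $\psi_1$ restricts (after base change to $O_{K_1}$ and the modification) to a $\mathcal{G}$-torsor $\pi^{\rm reg}_{\mathbf{K}} : \wti{\mathcal{A}}^{\rm spl}_{\mathbf{K}}(G,X) \to \mathcal{A}^{\rm spl}_{\mathbf{K}}$, while $\psi_2$ induces a $\mathcal{G}$-equivariant map $q^{\rm reg}_{\mathbf{K}} : \wti{\mathcal{A}}^{\rm spl}_{\mathbf{K}}(G,X) \to {\rm M}^{\rm spl}$ which is smooth because $\psi_2$ is smooth and the modification is carried out by the same linear-algebra recipe on the Shimura side and on the local-model side. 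Concretely one takes $\wti{\mathcal{A}}^{\rm spl}_{\mathbf{K}}(G,X)$ to be the scheme of trivializations of the $\mathcal{L}$-chain of the universal abelian scheme together with the supplementary $\mathrm{Fil}^0$-datum. This gives parts a) and b), and exhibits $\mathcal{A}^{\rm spl}_{\mathbf{K}}$ as étale-locally isomorphic to ${\rm M}^{\rm spl}$.

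The remaining global assertions are then immediate. Flatness of $\mathcal{A}^{\rm spl}_{\mathbf{K}}$ over $\Spec(O_{K_1})$ descends from flatness of ${\rm M}^{\rm spl}$ through the smooth maps $\pi^{\rm reg}_{\mathbf{K}}$ and $q^{\rm reg}_{\mathbf{K}}$; the generic-fiber identification $\mathcal{A}^{\rm spl}_{\mathbf{K}} \otimes_{O_{K_1}} K_1 = {\rm Sh}_{\mathbf{K}}(G,X) \otimes_E K_1$ follows because $\tau_1$ is an isomorphism on generic fibers and $\mathcal{A}^{\rm flat}_{\mathbf{K}} \otimes_{\mathcal{O}} E_v = {\rm Sh}_{\mathbf{K}}(G,X) \otimes_E E_v$. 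Likewise c) and c') follow from (ii) and (iii): smoothness, regularity, and the property of having special fiber a reduced divisor with normal crossings are all étale-local, hence pass between ${\rm M}^{\rm spl}$ and $\mathcal{A}^{\rm spl}_{\mathbf{K}}$ through the diagram.

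The main obstacle is the local computation (i)--(iii), and within it the analysis of the worst point of the special fiber of $\Mloc$ --- the point which by Proposition \ref{naive.notflat} fails to lift to characteristic zero inside $\mathcal{M}$. One must check that after passing to ${\rm M}^{\rm spl}$ this point is replaced by a smooth open locus (when $s=1$) or by a transverse crossing of exactly two smooth branches (when $s=2,3$), and that the blow-up introduces no embedded, non-reduced, or superfluous non-flat components. In good coordinates this reduces to showing that the special fiber of ${\rm M}^{\rm spl}$ is cut out locally by a single equation of the form $xy = 0$ (for $s = 2,3$), or is already smooth (for $s=1$); it is precisely here that the evenness $n = 2m$, the $\pi$-modular level $P_{\{m\}}$, and the bound $s \le 3$ enter decisively.
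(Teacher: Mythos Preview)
Your overall architecture matches the paper's: construct a local model diagram for $\mathcal{A}^{\rm spl}_{\mathbf{K}}$ and ${\rm M}^{\rm spl}$ via linear modification, then read off smoothness/semi-stability of $\mathcal{A}^{\rm spl}_{\mathbf{K}}$ from that of ${\rm M}^{\rm spl}$ \'etale locally. The paper builds the diagram by setting $\wti{\mathcal{A}}^{\rm spl}_{\mathbf{K}}(G,X)=\wti{\mathcal{A}}^{\rm loc}_{\mathbf{K}}(G,X)\times_{{\rm M}^{\rm loc}}{\rm M}^{\rm spl}$ and taking the $\calG$-quotient; your description via trivializations plus the extra filtration datum amounts to the same thing.

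There are, however, two genuine gaps in your local plan. First, ${\rm M}^{\rm spl}$ is \emph{not} a blow-up of $\Mloc$ along its non-smooth locus, and the paper does not proceed this way. For $s=1$ one in fact has ${\rm M}^{\rm spl}\cong\Mloc\otimes O_F$ (no modification at all), while for $s=2,3$ the special fiber of ${\rm M}^{\rm spl}$ acquires an entire new irreducible component sitting over the worst $\calG$-orbit of $\Mloc$ --- this is not what a blow-up along a point or closed orbit produces. The paper instead computes a single explicit affine chart of $\calM$ (not of $\Mloc$): the chart $_1\U_{r,s}$ around $(\calF_0,t\Lambda_m)$ when $s$ is even and the chart $_2\U_{r,s}$ around $(\calF_0,\Lambda')$ when $s$ is odd, reflecting that the unique closed $\calG$-orbit in $\Mloc\otimes k$ depends on the parity of $s$. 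After simplification these charts are governed by a relation $D\cdot Q=-2\pi I$ with $D$ skew-symmetric of size $s$ (resp.\ $s-1$), which for $s\le 3$ gives either $\mathbb{A}^{n-1}$ or a single equation of the form $d\cdot q=2\pi$.

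Second, and more seriously, you omit the step that makes a single chart suffice: one must show that $\calG$-translates of this chart cover all of ${\rm M}^{\rm spl}$. The paper does this by identifying the fiber of $\tau$ over the worst orbit with a Grassmannian $\Gr(2,n)\otimes k$ (resp.\ $\Gr(2,n-2)\otimes k$), noting that the reductive quotient of $\calG_k$ surjects onto the relevant symplectic group, and using that the symplectic action on $\Gr(2,\cdot)$ has exactly two orbits with the isotropic one closed --- and then checking that the chart meets this closed orbit. Without this covering argument, semi-stability of one chart does not yield semi-stability of ${\rm M}^{\rm spl}$.
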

 
In the above, the splitting model ${\rm M}^{\rm spl}$ is defined as $  {\rm M}^{\rm spl}:=\tau^{-1}({\rm M}^{\rm loc})$. Since every point of $\mathcal{A}^{\rm spl}_{\mathbf{K}} $ has an \'etale neighborhood which is also \'etale over ${\rm M}^{\rm spl}$, it is enough to show that ${\rm M}^{\rm spl}$ has the above nice properties. To show this, we explicitly calculate an affine chart $\calU$ of ${\rm M}^{\rm spl}$ in a neighbourhood of $  \tau^{-1}(*)$ where $*$ is a point from the unique closed $\mathcal{G}$-orbit supported in the special fiber of ${\rm M}^{\rm loc}$ (see \cite[\S 4]{P} for more details). Note that the unique closed $\mathcal{G}$-orbit depends on the parity of $s$ (see \S \ref{LocalModelVariants}). We treat these two cases, i.e. when $s$ is even and odd, separately in \S \ref{AffineChart1} and \S \ref{AffineChart2}.

Moreover, we give a moduli-theoretic description of ${\rm M}^{\rm spl}$ and so by the above for $\mathcal{A}^{\rm spl}_{\mathbf{K}}$. Inspired by the work of Pappas-Rapoport \cite{PR}, we define the closed subscheme $ \mathcal{M}^{\rm spin} \subset \mathcal{M}$ by adding the ``spin condition'' (see \S \ref{mod_des}) and we show

\begin{Theorem}\label{SpinIntro}
For the signatures $(n-s,s)$ with $s\leq 3$, we have $ \mathcal{M}^{\rm spin}={\rm M}^{\rm spl}$. 
\end{Theorem}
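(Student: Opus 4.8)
The plan is to prove the equality $\mathcal{M}^{\rm spin} = {\rm M}^{\rm spl}$ by a squeeze argument: one inclusion is essentially formal, while the other follows from the explicit chart computation that underlies the first theorem. First I would recall that both $\mathcal{M}^{\rm spin}$ and ${\rm M}^{\rm spl}$ are closed subschemes of the naive splitting model $\mathcal{M}$ (resp.\ of $\mathcal{M}\otimes_{\mathcal O}O_{K_1}$), that both agree with ${\rm Sh}$ over the generic fiber $K_1$, and that ${\rm M}^{\rm spl} = \tau^{-1}({\rm M}^{\rm loc})$ is by construction the \emph{flat closure} of that common generic fiber inside $\mathcal{M}$. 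The first step, $\,{\rm M}^{\rm spl}\subset\mathcal{M}^{\rm spin}$, then reduces to two observations: (i) the spin condition is a \emph{closed} condition, so $\mathcal{M}^{\rm spin}$ is a closed subscheme of $\mathcal{M}$; and (ii) the spin condition holds automatically on the generic fiber --- this is where I would invoke, as in Pappas--Rapoport \cite{PR}, that over $K_1$ the spin condition is implied by the other (determinant / wedge / linear-modification) conditions, since the relevant parahoric is already handled there. Given (i) and (ii), $\mathcal{M}^{\rm spin}$ is a closed subscheme of $\mathcal{M}$ containing the generic fiber of ${\rm M}^{\rm spl}$; but one must still know $\mathcal{M}^{\rm spin}$ contains ${\rm M}^{\rm spl}$ and not merely its generic fiber, which requires that ${\rm M}^{\rm spl}$ is the flat closure --- so I would phrase it as: ${\rm M}^{\rm spl}=\overline{{\rm M}^{\rm spl}_{K_1}}\subset\mathcal{M}^{\rm spin}$ because $\mathcal{M}^{\rm spin}$ is closed and contains ${\rm M}^{\rm spl}_{K_1}$.

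The reverse inclusion $\mathcal{M}^{\rm spin}\subset{\rm M}^{\rm spl}$ is the substantive half. Since ${\rm M}^{\rm spl}$ is the flat closure of the generic fiber, it suffices to show $\mathcal{M}^{\rm spin}$ is flat over $O_{K_1}$ (equivalently, topologically flat, i.e.\ has no components or embedded points supported in the special fiber), because a flat scheme with the correct generic fiber sitting inside $\mathcal{M}$ and containing ${\rm M}^{\rm spl}$ must equal it --- here one also uses that $\mathcal{M}^{\rm spin}$ and ${\rm M}^{\rm spl}$ have the same generic fiber from the first half. To establish flatness of $\mathcal{M}^{\rm spin}$ I would use $\mathcal{G}$-equivariance to reduce to a neighborhood of a point $*$ in the unique closed $\mathcal{G}$-orbit in the special fiber (the orbit depending on the parity of $s$, treated in \S\ref{AffineChart1} and \S\ref{AffineChart2}), and then compare, on the explicit affine chart $\calU$ around $\tau^{-1}(*)$, the ideal cut out by the spin condition with the ideal cut out by the flatness (i.e.\ by passing to the flat closure). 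Concretely: the chart computation for Theorem 1.1 already produces explicit equations for ${\rm M}^{\rm spl}$ near $\tau^{-1}(*)$ exhibiting it as smooth (for $s=1$) or with semi-stable equations (for $s=2,3$); I would show the spin condition, written out in the coordinates of that chart, generates exactly the same ideal. This is a finite, if delicate, linear-algebra-of-Plücker-type calculation: the spin condition is a sign-refinement of the wedge condition, and one checks that after imposing it the extra equations that appeared in the flat closure are precisely recovered.

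The main obstacle I anticipate is exactly this last ideal-theoretic comparison on $\calU$: one must match the spin-condition equations (defined via the half-spin representation and the splitting $\bigwedge^n V\otimes_{\QQ_p}\overline{\QQ}_p = W_1\oplus W_2$ into the two half-spin subspaces, as in \cite{PR}) with the rather different-looking equations produced by taking the scheme-theoretic flat closure in the explicit coordinates. In the ramified $\pi$-modular even case the sign bookkeeping in the spin representation is notoriously subtle, and the added Hodge-filtration subspace of the splitting model introduces further coordinates that must be tracked through the spin condition; getting the signs and the precise generators right --- rather than just up to radical --- is where the real work lies. A secondary, lighter obstacle is confirming that on the generic fiber the spin condition is automatic for this level, which I would either cite from \cite{PR} or verify directly using that $G$ is quasi-split over $K_1$ so that ${\rm M}^{\rm loc}_{K_1}$ is the expected (smooth) Schubert-type variety on which all variants of the local model coincide.
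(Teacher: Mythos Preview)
Your overall architecture is right, but there is a genuine gap in the $\calG$-reduction, and the hard computation you anticipate is not the one that is actually needed.

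The gap is in the sentence ``use $\calG$-equivariance to reduce to a neighborhood of a point $*$ in the unique closed $\calG$-orbit in the special fiber.'' For this reduction to cover $\mathcal{M}^{\rm spin}$ (as opposed to ${\rm M}^{\rm spl}$), you must already know that every $k$-point of $\mathcal{M}^{\rm spin}$ maps under $\tau$ into the underlying set of ${\rm M}^{\rm loc}$, so that the closed-orbit argument applies. A priori $\tau(\mathcal{M}^{\rm spin})$ lands only in ${\rm M}^{\rm spin}$, and nothing you have said excludes extra points of ${\rm M}^{\rm spin}$ outside ${\rm M}^{\rm loc}$. The paper closes this gap by invoking Smithling's theorem that ${\rm M}^{\rm spin}$ is topologically flat, i.e.\ $|{\rm M}^{\rm spin}|=|{\rm M}^{\rm loc}|$; with this in hand the covering argument for ${\rm M}^{\rm spl}$ (Theorems~\ref{MSemStable} and \ref{thm_cover_odd}) transfers verbatim to $\mathcal{M}^{\rm spin}$ (Proposition~\ref{Prop 7.3}). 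This is a nontrivial external input that your proposal does not mention, and without it the reduction is circular.

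Conversely, the ``notoriously subtle'' half-spin ideal comparison you flag as the main obstacle is bypassed entirely. The paper uses the reformulation (5') from \cite[Remark~9.9]{RSZ}: the spin condition says ${\rm rank}\bigl((t+\pi)|_{\mathcal{F}_1}\bigr)\equiv s\pmod 2$. Since for $s\le 3$ the chart $\calU$ is already flat (hence $\calU\cap{\rm M}^{\rm spl}=\calU$), what is needed is not an ideal comparison but simply the verification that (5') holds identically on $\calU$. On ${}_1\calU$ one has $(t+\pi)\mathcal{F}_1=\mathcal{F}_0\cdot Z^t$ with $Z=\widetilde{Y}D$, so the rank equals ${\rm rank}(D)$, which is even because $D$ is skew-symmetric; on ${}_2\calU$ the analogous computation gives $1+{\rm rank}(D)$, which is odd (Proposition~\ref{Prop 7.2}). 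Thus the spin check is a two-line rank-parity observation, not a Pl\"ucker calculation; the real content of the theorem lies in the covering step, which needs Smithling's result.
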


Here, we note that at the level of local models, a moduli description of $\Mloc$ for the signature $(n-1,1)$ can be obtained by adding the `` wedge and spin condition" to the moduli problem of ${\rm M}^{\rm naive}$ (see \cite{RSZ17}). For higher signatures, it has been conjectured that we can get the moduli description of $\Mloc$ by adding the ``wedge and spin conditions" to ${\rm M}^{\rm naive}$; see the survey paper \cite{PRS} for more details.

Before we move on to the description of $\calU$, we want to give a brief historical outline about the splitting models. 
Splitting models were first described for associated Shimura varieties of type $A$ and type $C$ by Pappas-Rapoport \cite{PR2}. For unitary groups, Kr\"{a}mer \cite{Kr} first showed that the splitting model has semi-stable reduction 
for the signature $(r,s)=(n-1,1)$ when $n=2m+1, I=\{0\}$. When the signature is $(r,s)=(n-2,2)$ 
the first author \cite{Zac1} showed that  the corresponding splitting model $\mathcal{M}$ does not have semi-stable reduction. Then, he resolved the singularities by giving an explicit blow-up of $\mathcal{M}$ along a smooth divisor. For more information on the geometry of the special fiber of $\mathcal{M}$ for general signature $(r,s)$ we refer the reader to the very recent work of Bijakowski and Hernandez \cite{BH}. Lastly, we want to mention the work of the second author where he considered the splitting model for triality groups in \cite{Zhao1} (see also \cite{Zhao2}). 

For the rest of this subsection, we fix $n=2m$ and $I=\{m\}$. 
Define
    \[
    J_n = \left[\ 
\begin{matrix}[c|c]
0_m & -H_m  \\ \hline
H_m & 0_m 
\end{matrix}\ \right]
    \]
where $H_m$ is the unit antidiagonal matrix (of size $m$). For a general signature $(r,s)$, we first obtain:

\begin{Theorem}

(1) When $s$ is even, there is an affine chart $\mathcal{U} \subset \calM$ which is isomorphic to $\Spec O_{K_1}[X,Y]/I $ where 
\[
I = \left(Y'-I_s, \,\, X+X^t, \,\, X\cdot  Y^t \cdot J_n \cdot Y  +2 \pi I_s\right)
\]
for some choice of $Y'$. Here $X$, $Y$ are matrices of sizes $s\times s$ and $n\times s$ respectively with indeterminates as entries and  
$Y'$ is a 
submatrix of $Y$ of size $  s \times s$, i.e. it is composed of $s$ rows from $Y$ along with the corresponding $s$ columns.

(2a) When $s=1$, there is an affine chart $\calU \subset \calM$  which is isomorphic to $\mathbb{A}_{O_{K_1}}^{n-1}$.

(2b) When $s$ is odd and $s\geq 3$, there is an affine chart $\calU \subset \calM$  which is isomorphic to $\mathbb{A}_{O_{K_1}}^{s-1}\times \Spec O_{K_1}[X, Y]/I$
where
\[
I=(Y'-[\mathbf{0}\mid I_{s-1}], \,\,  X+X^t, \,\, X\cdot  Y^t \cdot J_{n-2} \cdot Y +2\pi I_{s-1})
\]
for some choice of $Y'$. Here $X$, $Y$ are matrices of sizes $(s-1)\times (s-1)$ and $(n-2)\times s$ respectively with indeterminates as entries and 
$Y'$ is a submatrix of $Y$ of size $ (s-1)\times s$, i.e. it is composed of $(s-1)$ rows from $Y$ along with the corresponding $s$ columns.
\end{Theorem}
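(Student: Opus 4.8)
The plan is to extract the chart directly from the moduli description of $\calM$ recalled in \S\ref{LocalModelVariants}. An $S$-valued point of $\calM$ is a flag $\calF^{0}\subseteq\calF\subseteq N$, where $N=\Lambda_m\otimes_{\ZZ_p}\scrO_S$, with $\calF$ an $O_{K_1}$-stable $\scrO_S$-direct summand of rank $n$, totally isotropic for the perfect pairing on $N$ induced by $\phi$ — which in a suitably adapted basis of $\Lambda_m$ has Gram matrix $J_n$ — and subject to the Kottwitz condition for $(r,s)$, while $\calF^{0}$ is an $\scrO_S$-direct summand of rank $s$ satisfying the splitting conditions of \S\ref{LocalModelVariants}, namely $(\pi-\pi_0)\calF\subseteq\calF^{0}$ and $(\pi+\pi_0)\calF^{0}=0$, where $\pi$ denotes the operator on $N$ coming from the $O_{K_1}$-action and $\pi_0\in O_{K_1}$ its image under the structure map; thus $(\pi-\pi_0)(\pi+\pi_0)=0$ on $N$ and $\pi_0-\bar\pi_0=2\pi$ on the base $O_{K_1}$.

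First I would pin down $\tau^{-1}(*)$, where $*$ is the point of the unique closed $\calG$-orbit in the special fiber of $\Mloc$; its shape, recalled in \S\ref{LocalModelVariants}, depends on the parity of $s$, which is the origin of the even/odd dichotomy. In each case I choose a $\ZZ_p$-basis of $\Lambda_m$ adapted to the filtration at $*$ so that the pairing has Gram matrix $J_n$ (resp.\ $J_{n-2}$ on a codimension-two summand when $s$ is odd) and $\pi$ is in normal form, and I pass to the standard big-cell chart of the Grassmannian parametrizing $\calF^{0}$: the subspace $\calF^{0}$ is written as the column span of an $n\times s$ matrix $Y$ whose pivot submatrix $Y'$, formed by the $s$ rows indexed by the coordinate subspace of the cell, is forced to equal $I_s$ (resp.\ $[\mathbf{0}\mid I_{s-1}]$). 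The splitting relations then determine $\calF$ from $\calF^{0}$, and the residual freedom — a normal form for $\pi_0$ acting on $\calF^{0}$, equivalently for the alternating form it induces on $\calF^{0}$ — is recorded by the auxiliary matrix $X$.

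Next I would translate the conditions into relations among the entries of $X$ and $Y$. Carried through the big-cell trivialization, the splitting relations together with $(\pi-\pi_0)(\pi+\pi_0)=0$ and $\pi_0-\bar\pi_0=2\pi$ produce the constant term $2\pi$, and self-duality of $\calF$ for the $J_n$-pairing collapses to exactly two equations: the pairing being alternating forces $X+X^{t}=0$, and matching the failure of $\calF^{0}$ to be isotropic against $X$ forces $X\cdot Y^{t}J_nY+2\pi I_s=0$. (Note that $Y^{t}J_nY$ is antisymmetric, so for $s$ even it is generically invertible and then $X=-2\pi(Y^{t}J_nY)^{-1}$ is forced and automatically antisymmetric, whereas for $s$ odd $Y^{t}J_nY$ is always singular — this is precisely why the odd case behaves differently and must be reorganized around $J_{n-2}$.) I would then verify that the Kottwitz determinant condition, and the part of the duality condition not yet used, are automatic consequences of the flag conditions, so that no further equations occur; eliminating the pinned block $Y'$ yields $\calU\cong\Spec O_{K_1}[X,Y]/I$ with $I$ as stated. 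For $s$ odd the same computation, performed with $J_{n-2}$ and $(s-1)\times(s-1)$ blocks, is supplemented by the $s-1$ coordinate directions attached to the distinguished isotropic vector singled out by the parity, on which no relations are imposed and which contribute the split-off factor $\mathbb{A}^{s-1}_{O_{K_1}}$; when $s=1$ the matrix part degenerates and only $\mathbb{A}^{n-1}_{O_{K_1}}$ survives.

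The main obstacle will be the duality bookkeeping: showing that self-duality of $\calF$ for the $J_n$-pairing, once unwound in the big cell, reduces to precisely the two relations generating $I$ and to nothing more — equivalently, that the Kottwitz condition and every remaining quadratic duality relation are implied by the already-imposed flag conditions — and carrying this out while correctly tracking, for each parity of $s$, which rows of $Y$ constitute the pivot $Y'$ and which coordinate directions split off. Nailing the normalization of $X$ that makes the constant exactly $2\pi$, and the precise block sizes and column conventions in the odd case, are the delicate points; everything else is a routine, if lengthy, computation in the big cell.
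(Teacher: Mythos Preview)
Your overall strategy---parametrize in a big cell around $\tau^{-1}(*)$ and reduce the moduli conditions to explicit equations---is the paper's strategy too, but there is a genuine gap in how you describe the auxiliary matrix $X$ and in the step you call ``duality bookkeeping.'' You say the splitting relations determine $\calF$ from $\calF^{0}$ up to ``a normal form for $\pi_0$ acting on $\calF^{0}$,'' but $(t-\pi)\calF_0=0$ means $t$ acts as the \emph{scalar} $\pi$ on $\calF_0$, so there is no normal form to choose there. What actually happens (in the paper's notation) is that one parametrizes $\calF_1$ by a matrix $A$, and the condition $(t+\pi)\calF_1\subset\calF_0$ gives $A=YZ^t-\pi I_n$ for an auxiliary $n\times s$ matrix $Z$; this $Z$, not anything intrinsic to $\calF_0$, is the residual freedom. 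The isotropy of $\calF_1$ then becomes $ZY^t=-J_nYZ^tJ_n$, and the heart of the proof is showing that this relation forces $Z=\tilde{Y}\cdot D$ for a \emph{skew-symmetric} $s\times s$ matrix $D$ (your $X$), where $\tilde Y$ is $Y$ with rows reordered and signed by $J_n$. This is not automatic: it requires a case analysis on which side of $m$ the pivot indices $i_1,\dots,i_s$ lie, and the skew-symmetry of $D$ only emerges after comparing the sign patterns. Your sketch does not identify this step, and the assertion that ``self-duality \dots\ collapses to exactly two equations'' hides precisely this work.

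For odd $s$ the issue is sharper. The closed orbit point is not $t\Lambda_m$ but (the orbit of) a subspace $\Lambda'$ with $\Lambda_m=\Lambda'\oplus\Lambda''$ split off by a distinguished hyperbolic plane, and the paper's chart is built around $(\calF_0,\Lambda')$ with $\calF_1$ parametrized as $[I_n;A]^t$ and $A$ decomposed into blocks relative to $\Lambda'\oplus\Lambda''$. The factor $\mathbb{A}^{s-1}$ does not come from ``coordinate directions attached to the distinguished isotropic vector'' in any direct sense: it consists of one free diagonal entry $x$ of the $2\times 2$ block of $A$ together with $s-2$ entries $z_{1,2},\dots,z_{1,s}$ of an auxiliary matrix $Z_1$ that survive after all relations are imposed. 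Your plan to ``perform the same computation with $J_{n-2}$'' is the right endpoint, but getting there requires the block decomposition of $A$ and a separate reduction of the $Z_1,Z_2$ variables; simply replacing $n$ by $n-2$ in the even-$s$ argument does not produce the $\mathbb{A}^{s-1}$ factor or the shape $Y'=[\mathbf{0}\mid I_{s-1}]$.
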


By using the explicit description of $\calU$ above, we prove that $\mathcal{G}$-translates of $\mathcal{U}$ cover ${\rm M}^{\rm spl}$ and we deduce:


  
\begin{Theorem}
\begin{itemize}
\item[a)] When $s=1$, ${\rm M}^{\rm spl}$ is a smooth scheme. 

\item[b)] When $s=2$ or $s=3$, ${\rm M}^{\rm spl}$ has semi-stable reduction. In particular, ${\rm M}^{\rm spl}$ 
is regular and has special fiber a reduced divisor with two smooth irreducible components intersecting transversely.
\end{itemize}
\end{Theorem}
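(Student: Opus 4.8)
The plan is to derive both statements from the explicit affine chart $\calU$ of ${\rm M}^{\rm spl}$ produced by the previous theorem, to propagate its local structure over all of ${\rm M}^{\rm spl}$ by means of the $\calG$-action, and then to read off the regularity and the shape of the special fibre directly from the defining equations.

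First I would reduce to $\calU$. Recall that the special fibre of $\Mloc$ contains a \emph{unique} closed $\calG$-orbit, which therefore lies in the closure of every $\calG$-orbit, and that $\calU$ is an open neighbourhood of $\tau^{-1}(*)$ for a point $*$ of this closed orbit. Since $\tau$ is projective and $\calG$-equivariant and ${\rm M}^{\rm spl}=\tau^{-1}(\Mloc)$ is proper over $O_{K_1}$, a standard argument shows that the $\calG$-stable open set $\calG\cdot\calU\subseteq{\rm M}^{\rm spl}$ is all of ${\rm M}^{\rm spl}$: its complement is $\calG$-stable and closed, and if nonempty it would contain a closed point of the special fibre whose orbit closure — its image under the proper map $\tau$ being $\calG$-stable and closed — would contain the closed orbit of $\Mloc$, hence a point of $\calG\cdot\tau^{-1}(*)\subseteq\calG\cdot\calU$, a contradiction; on the generic fibre one uses instead that $\Mloc\otimes_{\calO}K_1$ is a single $\calG$-orbit. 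Thus $\{g\cdot\calU\}_{g\in\calG}$ is an open cover of ${\rm M}^{\rm spl}$, and it suffices to show that $\calU$ is smooth over $O_{K_1}$ when $s=1$ and has semi-stable reduction when $s=2,3$; the irreducible-component count of the special fibre then follows from that of $\calU$ together with the $\calG$-equivariant gluing.

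For $s=1$ we are done immediately, since $\calU\cong\mathbb{A}^{\,n-1}_{O_{K_1}}$ is smooth over $O_{K_1}$ by the previous theorem, giving (a). For $s=2$ and $s=3$ I would simplify the presentation $\calU\cong\mathbb{A}^{\,s-1}_{O_{K_1}}\times\Spec O_{K_1}[X,Y]/I$ of the previous theorem (the $\mathbb{A}^{s-1}$-factor being present only when $s$ is odd). The relation involving $Y'$ merely pins down certain rows of $Y$ to prescribed standard vectors; eliminating them turns the remaining entries of $Y$ into free coordinates and contributes a further affine-space factor. The relation $X+X^{t}=0$ forces $X=\left(\begin{smallmatrix}0&t\\-t&0\end{smallmatrix}\right)$ for a single variable $t$, and, $J_n$ (resp.\ $J_{n-2}$) being alternating, the relevant $2\times 2$ block of $Y^{t}JY$ has the shape $\left(\begin{smallmatrix}0&u\\-u&0\end{smallmatrix}\right)$ for a single polynomial $u$ in the free entries of $Y$; hence the last relation collapses to the single equation $t\,u=2\pi$, and $2\pi$ is a uniformizer of $O_{K_1}$ because $p$ is odd. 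The crux — and the step I expect to be the main obstacle — is the verification that $u$ has vanishing constant term but nonzero linear part. This is a direct computation with the alternating form, governed by the position of the rows pinned down by $Y'$ (which encodes the choice of the closed-orbit point $*$): those rows are in non-complementary position for $J$, so no product of two pinned entries contributes a constant term, while at least one pinned row pairs under $J$ with a free row and so contributes a nonzero linear monomial to $u$. Granting this, a linear change of coordinates replacing $u$ by a free coordinate $v$ yields
\[
\calU\;\cong\;\mathbb{A}^{\,M}_{O_{K_1}}\times\Spec O_{K_1}[t,v]/(t\,v-2\pi)\qquad\text{for some }M\ge 0.
\]

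Finally I would observe that $O_{K_1}[t,v]/(tv-2\pi)$ is flat over $O_{K_1}$ (being $\pi$-torsion free) and regular: it is smooth over $O_{K_1}$ away from $V(t)\cap V(v)$, while at a point of $V(t)\cap V(v)$ the maximal ideal is generated by $t$ and $v$ since $2\pi=tv$, so the embedding dimension equals the Krull dimension. Its special fibre is $\kappa[t,v]/(tv)$, where $\kappa$ is the residue field of $O_{K_1}$, a reduced divisor consisting of the two smooth components $V(t)$ and $V(v)$ meeting transversally along $V(t)\cap V(v)$. Multiplying by $\mathbb{A}^{\,M}_{O_{K_1}}$ preserves flatness, regularity and semi-stability and does not change the number of irreducible components; hence $\calU$ has semi-stable reduction with special fibre a reduced normal-crossings divisor having two smooth components, and by the reduction step the same holds for ${\rm M}^{\rm spl}$, which proves (b) and, together with the $s=1$ case, completes the proof.
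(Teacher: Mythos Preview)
Your reduction step has a genuine gap. You assert that $\calU$ is an open neighbourhood of the whole fibre $\tau^{-1}(*)$, and this is what drives your covering argument. But for $s=2$ and $s=3$ the fibre $\tau^{-1}(*)$ is isomorphic to a full Grassmannian --- namely $\Gr(2,n)\otimes k$ when $s=2$ (with $*=t\Lambda_m\otimes k$) and $\Gr(2,n-2)\otimes k$ when $s=3$ (with $*=\Lambda'\otimes k$) --- which is projective of positive dimension and hence cannot sit inside the affine scheme $\calU$. So your contradiction breaks down: the closed $\calG$-stable complement of $\calG\cdot\calU$ could meet $\tau^{-1}(*)$ in the nonempty set $\tau^{-1}(*)\setminus\calU$. (For $s=1$ your argument is fine, since there $\tau^{-1}(*)$ really is a single point.)

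The paper fills this gap with an extra orbit analysis inside the fibre. The $\calG_k$-action on $\tau^{-1}(*)\cong\Gr(2,\cdot)$ factors through the symplectic group of the induced alternating form $\langle\,,\,\rangle'$, and that symplectic action has exactly two orbits, the closed one $\mathcal Q$ consisting of totally isotropic $2$-planes. One then checks from the affine equations that $\calU\cap\tau^{-1}(*)$ contains points $(\calF_0,*)$ with $\calF_0\in\mathcal Q$ (in chart coordinates this is the locus $Q=0$, which is nonempty). Since $\calU$ is open and meets the unique closed orbit, it meets every orbit, and so $\calG\cdot\calU\supseteq\tau^{-1}(*)$. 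A secondary point: your claim that $u$ has vanishing constant term is not automatic --- it fails precisely when the rows singled out by $Y'$ are in complementary position for $J$ (e.g.\ rows $m$ and $m{+}1$ for $s=2$), where the constant term is $\pm 1$ and the chart, while still semi-stable, is not of the form $\mathbb A^M\times\Spec O_{K_1}[t,v]/(tv-2\pi)$. You can avoid this by fixing from the outset a choice of $Y'$ with rows in non-complementary position, but that choice should be made explicit rather than asserted.
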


When $s=2$ or $s=3$, we show, by using Theorem \ref{SpinIntro}, that one of the two components of the special fiber of ${\rm M}^{\rm spl}$ maps birationally to the special fiber of $\Mloc$ while the other irreducible component is the inverse image of the unique closed $\calG$-orbit. 

Moreover, for $(r,s)=(n-1,1) $, we deduce that $  {\rm M}^{\rm spl}$ is equal to the local model $\Mloc \otimes_{\mathcal{O}} O_{K_1}$ (see Remark \ref{loc_splt}). 
In \cite[\S 5.3]{PR}, the authors showed that $\Mloc$ is smooth. This is a case of ``exotic" good reduction. We also refer the reader to the result of \cite{HR} where the authors give an alternative explanation for the smoothness of $\Mloc$ by identifying its special fiber with a Schubert variety attached to a minuscule
cocharacter in the twisted affine Grassmannian corresponding to $P_{\{m\}}$.


Lastly, we want to mention that we can apply these results to obtain regular
(formal) models of the corresponding Rapoport-Zink spaces.

Let us now outline the contents of the paper: In \S \ref{Prelim}, we recall the parahoric subgroups of the unitary similitude group when $n=2m$. In \S \ref{LocalModelVariants}, we recall the definition of certain variants of local models for ramified unitary groups. The explicit equations of $\calU$ are given in \S \ref{Two affine charts}. In \S \ref{Naive.spl}, we show that the naive splitting model $\calM$ is not flat, and prove that $\calG$-translates of $\calU$ cover ${\rm M}^{\rm spl}$ for the signature $(r,s)$ ($s\leq 3$) in \S \ref{Evensemistable} and \S \ref{Odd_s} respectively. 
In \S \ref{Shimura}, we use the above results to construct smooth integral models for the signature $(r,s)=(n-1,1)$, and integral models with semi-stable reduction for $(r,s)=(n-2,2)$ and $(r,s)=(n-3,3)$. In \S \ref{mod_des}, we give a moduli-theoretic description of ${\rm M}^{\rm spl}$ for $s\leq 3$ by considering the spin condition.

\section{Preliminaries}\label{Prelim}
\subsection{Pairings and Standard Lattices}\label{Prel.1}
We use the notation of \cite{PR}. Let $F_0$ be a complete discretely valued field with ring of integers $O_{F_0}$, perfect residue field $k$ of characteristic $\neq 2$, and uniformizer $\pi_0$. Let $F/F_0$ be a ramified quadratic extension and $\pi \in F$ a uniformizer with $\pi^2 = \pi_0$.  Let $V$ be a $F$-vector space of dimension $n =2m> 3$ and let 
\[
\phi: V \times V \rightarrow F
\]
be an $F/F_0$-hermitian form. We assume that $\phi$ is split. This means that there exists a basis $e_1, \dots , e_n$ of $V$ such that 
\[
\phi(e_i,e_{n+1-j}) = \delta_{i,j} \quad \text{for  all} \quad i,j = 1, \dots, n.
\]
We attach to $\phi$ the respective alternating and symmetric $F_0$-bilinear forms $V \times V \rightarrow F_0$
\[
\langle x, y \rangle = \frac{1}{2}\text{Tr}_{F /F_0} (\pi^{-1}\phi(x, y)) \quad \text{and} \quad ( x, y ) =  \frac{1}{2}\text{Tr}_{F /F_0} (\phi(x, y)).
\]
For any $O_F$-lattice $\Lambda$ in $V$, we denote by
\[ 
\hat{\Lambda} = \{v \in V | \langle v, \Lambda \rangle \subset O_{F_0} \}
\]
the dual lattice with respect to the alternating form and by
\[
\hat{\Lambda}^s = \{v \in V | ( v, \Lambda ) \subset O_{F_0} \}
\]
the dual lattice with respect to the symmetric form. We have $ \hat{\Lambda}^s = \pi^{-1}\hat{\Lambda}.$ Both $\hat{\Lambda}$
and $\hat{\Lambda}^s$ are $O_F$-lattices in V, and the forms $\langle \, , \, \rangle $ and $ (\, , \,) $
induce perfect $O_{F_0}$-bilinear pairings
\begin{equation}\label{perfectpairing}
    \Lambda \times \hat{\Lambda} \xrightarrow{\langle \, , \, \rangle } O_{F_0}, \quad \Lambda \times \hat{\Lambda}^s \xrightarrow{ (\, , \,)} O_{F_0}
\end{equation}
for all $\Lambda$. Also, the uniformizing element $\pi$ induces a $O_{F_0}$- linear mapping on $\Lambda$ which we denote by $t$.

For $i= 0, \dots, n- 1$, we define the standard lattices
\[
\Lambda_i = \text{span}_{O_F} \{\pi^{-1}e_1, \dots, \pi^{-1}e_i, e_{i+1}, \dots, e_n\}.
\]
We consider nonempty subsets $I \subset \{0, \dots, m\}$ with the property 
\begin{equation}\label{Iproperty}
m-1 \in I \Longrightarrow m \in I
\end{equation}
(see \cite[\S 1.2.3(b)]{PR} for more details). We complete the $\Lambda_i$ with $i \in I$ to a self-dual periodic lattice chain by first including the duals 
$ \Lambda_{n-i}:=\hat{\Lambda}^s_i $ for $i \neq 0$ and then all the $\pi$-multiples: For $j \in \mathbb{Z}$ of the form $j = k \cdot n \pm i$ with $i \in I$ we put $\Lambda_{j} = \pi^{-k}\Lambda_i$. Then $ \{\Lambda_j\}_j$ form a periodic lattice chain $\Lambda_I$ (with $\pi \Lambda_j =  \Lambda_{j-n}$) which satisfies $ \hat{\Lambda}_j = \Lambda_{-j}$. We denote by  $\mathcal{L}$ such a self-dual multichain. Observe that the lattice $\Lambda_0$ is self-dual for the alternating form $\langle\,,\,\rangle$ and $\Lambda_m$ is  self-dual for the symmetric form $(\,,\,)$.  


\subsection{Unitary Similitude Group and Parahoric Subgroups}\label{ParahoricSbgrs}
We consider the unitary similitude group 
\[
G:=\GU(V, \phi) = \{g \in GL_F (V ) \,\, | \,\, \phi(gx, gy) = c(g)\phi(x, y), \quad  c(g) \in F^{\times}_0 \}
\]
and we choose a partition $n = r + s$; we refer to the pair $(r, s)$ as the signature. By replacing $\phi$ by $-\phi$ if needed, we can make sure that $s\leq r$ and so we assume that $s \leq r$ (see \cite[\S 1.1]{PR} for more details). Identifying $G \otimes F \simeq GL_{n,F} \times \mathbb{G}_{m,F}$, we define the cocharacter $ \mu_{r,s}$ as $ (1^{(s)}, 0^{(r)}, 1)$ of $D \times \mathbb{G}_m$, where $D$ is the standard maximal torus of diagonal matrices in $GL_n$; for more details we refer the reader to \cite{Sm2}. We denote by $E$ the reflex field of $\{ \mu_{r,s}\}$; then $E = F_0$ if $r = s$ and $E = F$ otherwise (see \cite[\S 1.1]{PR}). We set $O := O_E$. 

We next recall the description of the parahoric subgroups of $G$ from \cite[\S 1.2]{PR}, which actually follows from the results on parahoric subgroups of ${\rm SU}(V,\phi)$ in \cite[\S 1.2]{PR3}. 
Recall that $n=2m$ is even and $I$ is a non-empty subset of $\{0,\dots,m\}$ satisfying (\ref{Iproperty}). 
Consider the subgroup
\[
P_I:=\{g\in G\mid g\Lambda_i=\Lambda_i , \quad \forall i \in I\}.
\]
The subgroup $P_I$ is not a parahoric subgroup in
general since it may contain elements with nontrivial Kottwitz invariant. Consider the kernel of the Kottwitz homomorphism:
\[
P_I^0:=\{g\in P_I\mid \kappa(g)=1\}
\]
where $\kappa_G : G(F_0) \twoheadrightarrow \mathbb{Z} \oplus (\mathbb{Z}/2\mathbb{Z})$ (see also \cite[\S 3]{Sm2} for more details). We have the following (see \cite[\S 1.2.3(b)]{PR}):

\begin{Proposition}
The subgroup $P_I^0$ is a parahoric subgroup and every parahoric subgroup of $G$ is conjugate to $P_I^0$ for a
unique nonempty $I \subset \{0,\dots,m\}$ satisfying (\ref{Iproperty}). For such $I$, we have $P_I^0 = P_I$ exactly when $m \in I$. The special maximal parahoric subgroups are exactly those
conjugate to $P^0_{\{m\}} = P_{\{m\}}$.\qed   
\end{Proposition}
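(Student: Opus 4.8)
This is \cite[\S 1.2.3(b)]{PR}; below is the line of argument I would follow to recover it, via the parahoric theory of $\SU(V,\phi)$ developed in \cite[\S 1.2]{PR3}. The first step is to pass to the simply connected group: the derived group of $G=\GU(V,\phi)$ is $G^{\flat}=\SU(V,\phi)$, which is simply connected, and the enlarged Bruhat--Tits building of $G$ over $F_0$ is the product of $\mathcal B(G^{\flat},F_0)$ with the real vector space attached to the central torus, so the facets of $\mathcal B(G,F_0)$ coincide with those of $\mathcal B(G^{\flat},F_0)$. By the Haines--Rapoport description, the parahoric subgroup attached to a facet $\mathfrak f$ is $\mathrm{Fix}_G(\mathfrak f)\cap\ker\kappa_G$, where $\kappa_G\colon G(F_0)\twoheadrightarrow\mathbb Z\oplus(\mathbb Z/2\mathbb Z)$ is the Kottwitz homomorphism; so everything reduces to making these objects explicit on the standard apartment.

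Next I would set up the lattice-chain dictionary. Via the Goldman--Iwahori description of the building of $\GL_F(V)$, the points of the apartment of the diagonal torus $T$ correspond to graded periodic $O_F$-lattice chains in $V$; imposing invariance under the duality $\Lambda\mapsto\hat\Lambda$ (using $\pi\Lambda_j=\Lambda_{j-n}$ and $\hat\Lambda_j=\Lambda_{-j}$) cuts out precisely the self-dual chains $\Lambda_I$ indexed by nonempty $I\subseteq\{0,\dots,m\}$, and the constraint (\ref{Iproperty}) is exactly the condition for the folded chain to be an honest periodic lattice chain near the end of the local Dynkin diagram carrying the nodes $m-1$ and $m$ (this I would take from \cite[\S 1.2.2]{PR3}). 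Periodicity and duality then show that $g\in G(F_0)$ fixes $\mathfrak f_I$ precisely when it stabilizes every lattice of $\Lambda_I$, i.e.\ when $g\Lambda_i=\Lambda_i$ for all $i\in I$; hence $\mathrm{Fix}_G(\mathfrak f_I)=P_I$, and $P_I^0=P_I\cap\ker\kappa_G$ is the parahoric of $\mathfrak f_I$. Uniqueness of $I$ follows because parahorics attached to facets of one apartment are $G(F_0)$-conjugate only when the facets lie in a single orbit of the extended affine Weyl group, and the subsets $I$ satisfying (\ref{Iproperty}) index the facets of the fundamental alcove modulo the length-zero subgroup $\Omega$.

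The remaining two clauses need genuine computation rather than formalism. For ``$P_I^0=P_I$ iff $m\in I$'' I would evaluate $\kappa_G$ on $P_I$: the $\mathbb Z$-component is the valuation of the similitude factor $c(g)$, which vanishes on any lattice stabilizer, so only the $\mathbb Z/2\mathbb Z$-component is at stake. When $m\in I$, the self-duality of $\Lambda_m$ for the symmetric form $(\,,\,)$ makes $\bar\Lambda_m:=\Lambda_m\otimes_{O_{F_0}}k$ a nondegenerate quadratic space on which the uniformizer operator $t$ is skew-adjoint with $\mathrm{im}(t)=\ker(t)$ a Lagrangian; since any $g\in P_{\{m\}}$ is $O_F$-linear and similitude-preserving it stabilizes this canonical Lagrangian, hence reduces into the Siegel parabolic of $\mathrm{GO}(\bar\Lambda_m)$, which is connected --- so the special fibre of $\underline{\Aut}(\Lambda_{\{m\}})$ is connected and $P^0_{\{m\}}=P_{\{m\}}$. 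When $m\notin I$ one instead exhibits an explicit element of $P_{\{i\}}$ (for $i\in I$) with nontrivial Kottwitz class --- the end node $0$, where $\Lambda_0$ is self-dual only for the alternating form $\langle\,,\,\rangle$, being responsible for such elements --- as in \cite[\S 1.2]{PR3}. \textbf{The main obstacle is this Kottwitz calculation together with the last point: pinning down which vertices of the local Dynkin diagram of ramified $\SU(V,\phi)$ for $n=2m$ are special.} Here I would read off Tits' tables for this type and check that the special vertices form a single $G(F_0)$-conjugacy class, represented by the node $m$ --- in contrast with the odd case $n=2m+1$, where the two end nodes $0$ and $m$ are both special and inequivalent. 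Combined with $P^0_{\{m\}}=P_{\{m\}}$ from the previous step, this gives that the special maximal parahoric subgroups are exactly the conjugates of $P_{\{m\}}$.
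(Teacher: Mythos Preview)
The paper gives no proof of this proposition at all: the \qed immediately follows the statement, and the preceding sentence simply records that the description is taken from \cite[\S 1.2]{PR}, which in turn rests on \cite[\S 1.2]{PR3}. Your proposal correctly identifies this citation and then supplies a plausible outline of the underlying Bruhat--Tits/Kottwitz argument that the paper deliberately omits; so there is nothing to compare beyond noting that you have expanded on what the authors chose to black-box.
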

In this paper, we will focus on the special maximal parahoric subgroup when $n$ is even. So, we fix  $n=2m$, $I=\{m\}$ and we let $\mathcal{L}$ be the multichain defined in \ref{Prel.1} for this choice of $I$. Denote by $\calG$ the (smooth) group scheme of automorphisms 
of the polarized chain $\mathcal{L}$ over $O_{F_0}$; then $\calG$ is the parahoric group model of $G$ in the sense of Bruhat-Tits \cite{BT84} with $\calG(O_{F_0}) =P_{\{m\}}$. It has connected fibers (see  \cite[\S 1.5]{PR}).


\section{Rapoport-Zink Local Models and Variants}\label{LocalModelVariants}
We briefly recall the definition of certain variants of local models for ramified unitary groups that correspond to the local model triples $(G, \mu_{r,s}, P_{\{m\}} )$. 

Let ${\rm M}^{\rm naive}$ be the functor which associates to each scheme $S$
over $\Spec O$ the set of subsheaves $\mathcal{F}$ of $O \otimes \mathcal{O}_S $-modules of $ \Lambda_m \otimes \mathcal{O}_S  $
such that
\begin{enumerate}
    \item $\mathcal{F}$ as an $\mathcal{O}_S $-module is Zariski locally on $S$ a direct summand of rank $n$;
    \item $\mathcal{F}$ is totally isotropic for $( \,
, \, ) \otimes \mathcal{O}_S$;
    \item (Kottwitz condition) $\text{char}_{t |  \mathcal{F} } (X)= (X + \pi)^r(X - \pi)^s $.
\end{enumerate}

\begin{Remarks}{\rm
 
 In \cite[\S 1.5.1]{PR}, the authors define the naive local model $\rmM^{\rm naive}$ that sends each $O$-scheme $S$ to the families of $O\otimes \calO_S$-modules $(\calF_i\subset \Lambda_i\otimes\calO_S)_{i \in n\mathbb{Z}\pm I}$ that satisfy the conditions (a)-(d) of loc. cit. We want to explain how we get the isotropic condition (2) from the condition (c) of loc. cit. When $I=\{m\}$, the complete lattice chain gives:
 \[
 \cdots\rightarrow \Lambda_{-m}\otimes\calO_S\rightarrow \Lambda_{m}\otimes\calO_S\rightarrow \cdots
 \]
where the isomorphism $t:\Lambda_{m}\otimes\calO_S\rightarrow \Lambda_{-m}\otimes\calO_S$ induces an isomorphism $\calF_{m}$ with $\calF_{-m}$. Hence, $\calF_{-m}$ is determined by $\calF_m$ and we have $\calF_{-m}=t\calF_{m}$. The perfect bilinear pairing 
\[
 \bb \ , \ \pp\otimes\calO_S: (\Lambda_{-m}\otimes\calO_S)\times (\Lambda_{m}\otimes\calO_S)\rightarrow \calO_S
 \]
induced by (\ref{perfectpairing}) identifies $\calF_m^\bot \subset \Lambda_{-m}\otimes\calO_S$ with $\calF_{-m}$ where $\calF_m^\bot$ is the orthogonal complement of $\mathcal{F}_m$ under the above perfect
pairing. Thus, $\bb \calF_{-m},\calF_m\pp=0$ (condition (c) of loc. cit.) is equivalent to $(\calF_m,\calF_m)=0$ since
 \begin{flalign*}
 	\bb \calF_{-m},\calF_m\pp &=\bb t\calF_{m},\calF_m\pp=\frac12\Tr_{F/F_0}(\pi^{-1}\phi(t\calF_{m},\calF_m))\\
 	&=\frac12\Tr_{F/F_0}(\phi(\calF_{m},\calF_m))=(\calF_m,\calF_m).
 \end{flalign*}
 }\end{Remarks}
 
The functor ${\rm M}^{\rm naive}$ is represented by a closed subscheme, which we again
denote ${\rm M}^{\rm naive}$, of $\Gr(n, 2n) \otimes \Spec O$; hence ${\rm M}^{\rm naive}$ is a projective $ O$-scheme. (Here we denote by $\Gr(n, d)$ the Grassmannian scheme parameterizing locally direct summands of rank $n$ of a free module of rank $d$.) The scheme ${\rm M}^{\rm naive}$ is the \textit{naive local model} of  Rapoport-Zink \cite{RZbook}. Also, ${\rm M}^{\rm naive}$ supports an action of $\mathcal{G}$. 
\begin{Proposition}\label{notflat}
We have 
\[
{\rm M}^{\rm naive} \otimes_{ O} E \cong \Gr(s,n)\otimes E.
\]
In particular, the generic fiber of ${\rm M}^{\rm naive}$ is smooth and geometrically irreducible of dimension $rs$.
\end{Proposition}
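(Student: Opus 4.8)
The plan is to describe the functor ${\rm M}^{\rm naive}\otimes_{O}E$ explicitly; recall that in our setting $r\neq s$, so $E=F$ and $O=O_F$, and the crucial point is that the algebra $O\otimes E$ becomes split: since $\pi^{2}=\pi_{0}$ and $2\pi\in E^{\times}$, we may factor $X^{2}-\pi_{0}=(X-\pi)(X+\pi)$ over $E$ and obtain a ring isomorphism $O\otimes_{O_{F_0}}E\cong E\times E$ sending $\pi\otimes 1$ to $(\pi,-\pi)$. Let $e^{+},e^{-}$ be the corresponding orthogonal idempotents. For any $E$-scheme $S$, the free $O\otimes\mathcal{O}_{S}$-module $M_{S}:=\Lambda_{m}\otimes\mathcal{O}_{S}$ then splits as $M_{S}=M_{S}^{+}\oplus M_{S}^{-}$ with $M_{S}^{\pm}=e^{\pm}M_{S}$ locally free of rank $n$ over $\mathcal{O}_{S}$, and on $M_{S}^{\pm}$ the operator $t$ acts as the scalar $\pm\pi$; here $M_{S}^{\pm}$ is the base change of the $n$-dimensional $E$-vector space $L^{\pm}:=e^{\pm}(\Lambda_{m}\otimes_{O_{F_0}}E)$. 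Since a point $\calF\in{\rm M}^{\rm naive}(S)$ must be an $O\otimes\mathcal{O}_{S}$-submodule, it automatically decomposes as $\calF=\calF^{+}\oplus\calF^{-}$ with $\calF^{\pm}\subseteq M_{S}^{\pm}$ an $\mathcal{O}_{S}$-module direct summand.

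Next I would extract the ranks from the Kottwitz condition (3). Because $t$ acts on $\calF^{+}$ (resp.\ on $\calF^{-}$) by the scalar $\pi$ (resp.\ $-\pi$), we get $\text{char}_{t | \calF}(X)=(X-\pi)^{\rank(\calF^{+})}(X+\pi)^{\rank(\calF^{-})}$; comparing with $(X+\pi)^{r}(X-\pi)^{s}$ and using that $X-\pi$ and $X+\pi$ generate the unit ideal of $\mathcal{O}_{S}[X]$ (again because $2\pi$ is invertible), we conclude $\rank(\calF^{+})=s$ and $\rank(\calF^{-})=r$; in particular condition (1) is automatic. For the isotropy condition (2) I would first note that $L^{+}$ and $L^{-}$ are each totally isotropic for $(\,,\,)$: from the relation $(av,w)=(v,\bar a w)$, valid since $\phi$ is conjugate-linear in one argument, applied with $a=\pi$ we get $(\pi v,w)=-(v,\pi w)$, and for $v,w$ in the same summand this forces $2\pi(v,w)=0$, hence $(v,w)=0$. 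Since the form on $\Lambda_{m}\otimes_{O_{F_0}}E$ is nondegenerate and $L^{+},L^{-}$ are complementary isotropic subspaces, the induced pairing $L^{+}\times L^{-}\to E$ is perfect. Consequently $(\calF^{+},\calF^{+})=(\calF^{-},\calF^{-})=0$ for free, so $\calF$ is totally isotropic if and only if $(\calF^{+},\calF^{-})=0$, i.e.\ if and only if $\calF^{-}\subseteq(\calF^{+})^{\perp}$ inside $M_{S}^{-}$; and since $(\calF^{+})^{\perp}$ is an $\mathcal{O}_{S}$-direct summand of rank $n-s=r=\rank(\calF^{-})$, the inclusion is forced to be an equality $\calF^{-}=(\calF^{+})^{\perp}$.

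Putting this together, $\calF\mapsto\calF^{+}$ defines an isomorphism of functors from ${\rm M}^{\rm naive}\otimes_{O}E$ onto the functor parametrizing rank-$s$ local direct summands of $L^{+}\otimes_{E}\mathcal{O}_{S}\cong\mathcal{O}_{S}^{\,n}$, that is, onto $\Gr(s,n)\otimes E$; this is the asserted isomorphism, and the last assertion follows from the standard fact that $\Gr(s,n)$ is smooth and geometrically irreducible of dimension $s(n-s)=rs$. The only real content here is the observation that the Kottwitz and isotropy conditions together collapse the datum $\calF$ to the single subspace $\calF^{+}$; the rest is linear algebra over the field $E$. This is essentially the argument of \cite{RZbook} and \cite[\S 1.5]{PR}; the mild point to be careful about is the identification $O\otimes E\cong E\times E$ and the compatibility of the symmetric form with that splitting.
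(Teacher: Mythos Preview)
Your argument is correct and is precisely the standard one: the paper itself does not give a proof but simply refers to \cite[\S 1.5.3]{PR}, where exactly this splitting $O_F\otimes_{O_{F_0}}E\cong E\times E$ is used to decompose $\calF=\calF^{+}\oplus\calF^{-}$, read off the ranks from the Kottwitz condition, and recover $\calF^{-}$ from $\calF^{+}$ via the isotropy condition. So your approach and the paper's (i.e.\ the cited reference's) coincide.

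One minor remark: your opening assumption ``in our setting $r\neq s$'' is not quite accurate for this paper, since the cases $(r,s)=(2,2)$ with $n=4$ and $(r,s)=(3,3)$ with $n=6$ do occur among the signatures considered. When $r=s$ the reflex field is $E=F_0$, and $O_F\otimes_{O_{F_0}}F_0\cong F$ is a field rather than a split algebra, so your idempotent decomposition is not available over $E$ itself. The fix is easy: run your argument after the faithfully flat base change $F_0\to F$ to obtain ${\rm M}^{\rm naive}\otimes_{O}F\cong\Gr(s,n)\otimes F$, which already yields the ``in particular'' assertions (smoothness, geometric irreducibility, dimension $rs$), and then invoke Galois descent for the isomorphism over $F_0$ if you want the full statement. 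This is a routine addendum, not a genuine gap in the method.
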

\begin{proof}
See \cite[\S 1.5.3]{PR}. 
\end{proof}
Next, we consider the closed subscheme ${\rm M}^{\wedge}\subset {\rm M}^{\rm naive}$ by imposing the following additional condition: 
\[
 \wedge^{r+1} (t-\pi | \mathcal{F}) = (0) \quad \text{and} \quad  \wedge^{s+1} (t+\pi | \mathcal{F}) = (0). 
\]
More precisely, ${\rm M}^{\wedge}$ is the closed subscheme of ${\rm M}^{\rm naive}$ that classifies points given by $\mathcal{F}$ which satisfy the wedge condition. The scheme ${\rm M}^{\wedge}$ supports an action of $\mathcal{G}$ and the immersion $i : {\rm M}^{\wedge} \rightarrow
{\rm M}^{\rm naive}$ is $\mathcal{G}$-equivariant. The scheme ${\rm M}^{\wedge}$ is called the \textit{wedge local model}.
\begin{Proposition}
The generic fibers of ${\rm M}^{\rm naive} $ and ${\rm M}^{\wedge} $ coincide, in particular
the generic fiber of ${\rm M}^{\wedge}$ is a smooth, geometrically irreducible variety of
dimension rs.
\end{Proposition}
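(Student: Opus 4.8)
The plan is to show that on the generic fibre the two wedge conditions
\[
\wedge^{r+1}(t-\pi\mid\calF)=0,\qquad \wedge^{s+1}(t+\pi\mid\calF)=0
\]
are automatically satisfied by every point $\calF$ of ${\rm M}^{\rm naive}\otimes_{O}E$; since $i:{\rm M}^{\wedge}\hookrightarrow{\rm M}^{\rm naive}$ is a closed immersion, this gives ${\rm M}^{\wedge}\otimes_{O}E={\rm M}^{\rm naive}\otimes_{O}E$, and the ``in particular'' part is then immediate from Proposition \ref{notflat}. The wedge conditions only constrain the ranks of morphisms of vector bundles, so they may be checked after a faithfully flat base change; hence I would first pass to a field extension $E'/E$ with $F\subseteq E'$ — this matters only in the exotic case $r=s$, where $E=F_{0}$ and $\pi\notin E$ — or, alternatively, argue with geometric points and use that ${\rm M}^{\rm naive}\otimes_{O}E$ is reduced (indeed smooth) by Proposition \ref{notflat}.

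The key step is the eigenspace decomposition of $t$. Over $E'$ the element $\pi_{0}=\pi^{2}$ is a unit and $\tfrac12\in E'$, so for an $S$-point $\calF\subset\Lambda_{m}\otimes\calO_{S}$ of ${\rm M}^{\rm naive}\otimes_{O}E'$ the complementary idempotents $e_{\pm}=\tfrac12(1\pm\pi^{-1}t)$ split $\Lambda_{m}\otimes\calO_{S}$ into the $t=\pm\pi$ eigen-subbundles $M_{\pm}$, each locally free of rank $n$. As $\calF$ is an $O_{F}\otimes\calO_{S}$-submodule it is $t$-stable, so $\calF=e_{+}\calF\oplus e_{-}\calF$ with $e_{\pm}\calF$ locally free; then $\operatorname{char}_{t\mid\calF}(X)=(X-\pi)^{\rank(e_{+}\calF)}(X+\pi)^{\rank(e_{-}\calF)}$, and comparing with the Kottwitz condition $\operatorname{char}_{t\mid\calF}(X)=(X+\pi)^{r}(X-\pi)^{s}$ (using $\pi\neq-\pi$ in $E'$) forces $\rank(e_{+}\calF)=s$ and $\rank(e_{-}\calF)=r$. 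Now $t-\pi$ vanishes on $e_{+}\calF$ and is multiplication by the unit $-2\pi$ on $e_{-}\calF$, so its image is the rank-$r$ subbundle $e_{-}\calF$ and $\wedge^{r+1}(t-\pi\mid\calF)=0$; symmetrically $\wedge^{s+1}(t+\pi\mid\calF)=0$. This proves the equality of generic fibres, and Proposition \ref{notflat} then supplies smoothness, geometric irreducibility and the dimension $rs$.

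I expect the only genuine subtlety to be the bookkeeping around the base field: one must check that $\pi$, the idempotents $e_{\pm}$ and the eigen-decomposition really make sense over the relevant base (hence the passage to $E'$, or the alternative appeal to reducedness of ${\rm M}^{\rm naive}\otimes_{O}E$ together with the fact that a surjective closed immersion into a reduced scheme is an isomorphism), and that $\operatorname{char}_{t\mid\calF}$ is multiplicative along the decomposition $\calF=e_{+}\calF\oplus e_{-}\calF$. Once these are in place the rank count is forced and the rest is formal.
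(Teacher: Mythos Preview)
Your argument is correct. The paper does not give its own proof here; it simply cites \cite[\S 1.5.6]{PR}, so there is no approach to compare against directly. Your eigenspace decomposition via the idempotents $e_{\pm}=\tfrac12(1\pm\pi^{-1}t)$ is exactly the standard way to unwind the Kottwitz condition over the generic fibre, and it is essentially what one finds when one chases the reference. The care you take with the base field (passing to $E'\supseteq F$ so that $\pi$ and the idempotents are available, or alternatively invoking reducedness of the generic fibre) is appropriate and handles the only real subtlety, namely the case $r=s$ where $E=F_0$.
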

\begin{proof}
See  \cite[\S 1.5.6]{PR}. 
\end{proof}

For the special maximal parahoric subgroup $P_{\{0\}}$ and signature $(n-1,1)$, Pappas proved that the wedge local model ${\rm M}^{\wedge}$ is flat \cite{P}. But for more general parahoric subgroups, the wedge condition turns out to be insufficient (see \cite[Remark 5.3, 7.4]{PR}). There is a further variant: let $\Mloc$ be the scheme theoretic closure of the generic fiber $ {\rm M}^{\rm naive} \otimes_{O} E$ in ${\rm M}^{\rm naive}$. The scheme $\Mloc$ is called the \textit{local model}. We have closed immersions of projective schemes
\[
\Mloc \subset {\rm M}^{\wedge}\subset {\rm M}^{\rm naive}
\]
which are equalities on generic fibers (see \cite[\S 1.5]{PR} for more details).
\begin{Proposition}
a) For any signature $(r, s)$, the special fiber of $\Mloc$ is integral, normal, Cohen-Macaulay and has only rational singularities.

b) For $(r, s)= (n-1,1)$, $\Mloc$ is smooth. 
\end{Proposition}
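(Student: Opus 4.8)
The plan is to handle the two parts by different methods: part (b) by an explicit computation in a single affine chart, and part (a), for general signature, by identifying the special fibre with a Schubert variety in the twisted affine Grassmannian attached to $P_{\{m\}}$.

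For (b), first note that $\Mloc$ is flat over $O=O_E$ by construction, that its generic fibre $\Mloc\otimes E={\rm M}^{\rm naive}\otimes E$ is smooth by Proposition~\ref{notflat}, and that the formation of $\Mloc$ is $\mathcal{G}$-equivariant. Hence the non-smooth locus of $\Mloc$ is a closed, $\mathcal{G}$-stable subset contained in the special fibre; if it were nonempty it would contain a closed $\mathcal{G}$-orbit, and therefore the unique closed $\mathcal{G}$-orbit of the special fibre. So it suffices to prove that $\Mloc$ is smooth at a point $*$ of that orbit. Around $*$ one writes an explicit affine chart: the datum $\mathcal{F}\subset\Lambda_m\otimes\mathcal{O}_S$ becomes the graph of a homomorphism recorded by a matrix over $\mathcal{O}_S$; the isotropy condition for $(\,,\,)$ becomes a symmetry constraint; and the $O$-linearity of $\mathcal{F}$ together with the Kottwitz condition $\mathrm{char}_{t\mid\mathcal{F}}(T)=(T+\pi)^{n-1}(T-\pi)$ and the wedge condition $\wedge^{2}(t+\pi\mid\mathcal{F})=(0)$ become, after simplification, the vanishing of all $2\times2$ minors of an explicit matrix together with a trace-type relation. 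One then checks that the scheme-theoretic closure of the generic fibre inside this chart is cut out by relations solvable linearly, so that the chart is an affine space $\mathbb{A}^{n-1}_{O}$; this is the ``exotic good reduction'' of Pappas--Rapoport \cite{PR}. The essential difficulty is that the naive equations do \emph{not} define a smooth, or even flat, scheme, so one must pin down exactly the extra equations forced by passing to the flat closure, which requires careful bookkeeping with the minors. Granting this, $\Mloc$ is smooth along the closed orbit, hence everywhere, which proves (b); in particular the special fibre is then smooth, so a fortiori integral, normal, Cohen--Macaulay and with rational singularities, which gives (a) for $(r,s)=(n-1,1)$.

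For (a) in general I would pass to the loop-group description of the local model triple $(G,\mu_{r,s},P_{\{m\}})$, embedding $\Mloc$ into a Beilinson--Drinfeld-type degeneration over $O$ of the twisted affine Grassmannian of the ramified unitary group, in the style of Pappas--Rapoport and Pappas--Zhu (for the signature $(n-1,1)$ this identification is carried out in \cite{HR}). Then $\Mloc\otimes k$ becomes a closed $\mathcal{G}_{k}$-invariant subscheme of the twisted affine Grassmannian $\Gr_{\mathcal{G},k}$, hence a union of Schubert varieties. A lattice-chain computation shows this union lies inside the $\mu_{r,s}$-admissible locus, while a dimension count --- the special fibre has dimension $rs=\dim(\Mloc\otimes E)$ --- forces it to contain the Schubert variety of the unique maximal $\mu_{r,s}$-admissible element; since $P_{\{m\}}$ is a \emph{special maximal} parahoric and $\mu_{r,s}$ is minuscule, the admissible locus \emph{equals} that single Schubert variety, which is in particular irreducible. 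It then remains to invoke the known geometric properties of Schubert varieties in twisted affine Grassmannians: they are reduced, normal, Cohen--Macaulay and have rational singularities because $p$ is odd, so the relevant group scheme over $k$ is tame and the Frobenius-splitting techniques of Faltings, Mathieu and Littelmann apply in the form extended to the ramified/twisted setting by Pappas--Rapoport, Zhu and Haines--Richarz; combined with flatness of $\Mloc$ this also shows $\Mloc\otimes k$ is itself reduced, so the set-theoretic identification above is in fact scheme-theoretic. The main obstacle is precisely this ramified, non-reductive situation: the loop group is not reductive, its degenerated affine Grassmannian can have non-reduced special fibre, and one must verify that we are in the tame case $p\neq2$ where the splitting technology --- and hence normality of $\Mloc\otimes k$ --- is available. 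For signature $(n-1,1)$ this is subsumed by part (b), and for general $(r,s)$ one may alternatively follow Pappas--Rapoport, first cutting $\Mloc$ out of ${\rm M}^{\rm naive}$ by the wedge and spin conditions and then carrying out the Schubert-variety identification on that more explicit model.
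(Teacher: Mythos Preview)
Your proposal is correct and follows the approach of the references \cite[\S 5]{PR} and \cite{HR22} that the paper cites for this result: an explicit affine-chart computation around the closed orbit for part (b), and the identification of the special fibre with a Schubert variety in the twisted affine Grassmannian together with the Frobenius-splitting/normality results of Pappas--Rapoport and Haines--Richarz for part (a). The paper itself gives no argument beyond these citations, so there is nothing further to compare.
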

\begin{proof}
See \cite[\S 5]{PR} and \cite{HR22}. 
\end{proof}
Except in the case (b) above (up to switching $(r, s)$ and $(s,r)$), the local models $\Mloc$ are
never smooth; see  \cite[Remark 2.6.8]{PRS} for more details. As in \cite[\S 2.4.2, \S 5.5]{PR},  there is a unique closed $\calG$-orbit in the special fiber of $\Mloc$. When $s$ is even the closed orbit is the $ k$-valued point 
\[
\mathcal{F} =\text{span}_{O_{F_0}} \{e_1, \dots, e_m, \pi e_{m+1}, \dots, \pi e_n\}\otimes k
\]
and when $s$ is odd the closed orbit is the orbit of the $k$-valued point
\[
\mathcal{F} = \text{span}_{O_{F_0}} \{-\pi^{-1}e_1,-e_1, \dots, -e_m, \pi e_{m+1}, \dots, \pi e_{n-1}\}\otimes k.
\]


Next, we consider the moduli scheme $\mathcal{M}$ over $O_F$, the \textit{naive splitting (or Krämer) model} as in \cite[Remark 14.2]{PR2} and \cite[Definition 4.1]{Kr}, whose points for an $O_F $-scheme $S$ are Zariski locally $\mathcal{O}_S$-direct summands $ \mathcal{F}_0 , \mathcal{F}_1 $ of ranks $s$, $n$ respectively, such that

\begin{enumerate}
    \item $ (0) \subset \mathcal{F}_0 \subset \mathcal{F}_1 \subset \Lambda_m  \otimes \mathcal{O}_S$;
    \item $\mathcal{F}_1 = \mathcal{F}^{\bot}_1$, i.e. $\mathcal{F}_1$ is totally isotropic for $( \,
, \, ) \otimes \mathcal{O}_S$;
    \item $(t + \pi) \mathcal{F}_1 \subset \mathcal{F}_0$;
    \item $(t - \pi)\mathcal{F}_0 = (0)$.
\end{enumerate}
The functor is represented by a projective $O_F $-scheme $ \mathcal{M}$. The scheme $ \mathcal{M}$ supports an action of $\calG$ and there is a $\calG$-equivariant projective morphism 
\[
\tau : \mathcal{M} \rightarrow {\rm M}^{\wedge}\otimes_{O} O_F \subset {\rm M}^{\rm naive}\otimes_{O} O_F
\]
which is given by $(\mathcal{F}_0,\mathcal{F}_1) \mapsto \mathcal{F}_1$ on $S$-valued points. (Indeed, we can easily see, as in \cite[Definition 4.1]{Kr}, that $\tau$ is well defined.) The morphism $\tau  : \mathcal{M} \rightarrow {\rm M}^{\wedge}\otimes_{O} O_F $ induces an isomorphism on the generic fibers (see \cite[Remark 4.2]{Kr}). In \S \ref{spl}, we will prove that $ \mathcal{M} $ is not flat for any signature $(r,s)$ and we will study the following variation:  
\[
{\rm M}^{\rm spl}:=\tau^{-1}({\rm M}^{\rm loc})=\calM\times_{{\rm M}^\wedge} {\rm M}^{\rm loc}.
\]
The closed subscheme ${\rm M}^{\rm spl} $ is the \textit{splitting model}. We will show that this model is smooth for the signature $(n-1,1) $ and has semi-stable reduction for the signatures $(n-2,2)$ and $(n-3,3)$. Moreover, for these signatures we will show that ${\rm M}^{\rm spl}$ has an explicit moduli-theoretic description.
 

\section{Two Affine Charts}\label{Two affine charts}

The goals of this section are to write down the equations that define the naive splitting model $\mathcal{M}$ in two open neighborhoods $_1\U_{r,s}$ and $_2\U_{r,s}$ for general signature $(r,s)$. Here, $_1\U_{r,s}$ is an affine chart around $(\calF_0, t\Lambda_m)$ (see \S \ref{AffineChart1}) and $_2\U_{r,s}$ is an affine chart around $(\calF_0, \Lambda')$, where $\Lambda'$ is defined in \S \ref{AffineChart2}. 


\subsection{An Affine Chart $_1\U_{r,s}$}\label{AffineChart1}
Recall that $n=2m= r+s$ with $s\leq r$. In this subsection, we are going to write down the equations that define $\calM$ in a neighborhood $_1\calU_{r,s}$ of $(\calF_0, t\Lambda_m)$ where


\[
\Lambda_m = \text{span}_{O_F} \{\pi^{-1}e_1, \dots, \pi^{-1}e_m, e_{m+1}, \dots, e_n\} =
\]
\[
\text{span}_{O_{F_0}} \{\pi^{-1}e_1, \dots, \pi^{-1}e_m, e_{m+1}, \dots, e_n,e_1, \dots, e_m, \pi e_{m+1}, \dots, \pi e_n\}.
\]
The matrix of $(\,,\, )$ in the latter basis is 
\[
\left[\ 
\begin{matrix}[c|c]
0_n & J_n  \\ \hline
-J_n & 0_n 
\end{matrix}\ \right],
\]
where 
    \[
    J_n = \left[\ 
\begin{matrix}[c|c]
0_m & -H_m  \\ \hline
H_m & 0_m 
\end{matrix}\ \right],
    \]
and $H_m$ is the unit antidiagonal matrix (of size $m$). Observe that $ J^2_n = -I_n.$ Also, since $ s \leq r$ and $ n =2m =r+s$ we get that $s\leq m$.

In order to find the explicit equations that describe $_1\U_{r,s}$, we use similar arguments as in the proof of  \cite[Theorem 4.5]{Kr} (see also \cite[\S 3]{Zac1}). In our case, we consider: 
\[  
   \mathcal{F}_1 =    \left[\ 
\begin{matrix}[c]
A\\ \hline
I_n  
\end{matrix}\ \right], \quad \mathcal{F}_0 = X =  \left[\ 
\begin{matrix}[c]
X_1\\ \hline
X_2  
\end{matrix}\ \right], 
\]
where $A$ is of size $ n \times n$, $X$ is of size $2n \times s$ and $X_1,X_2$ are of size $n\times s$; with the additional condition that $\mathcal{F}_0$ has rank $s$ and so $X$ has an invertible $s\times s$-minor. We also ask that $(\mathcal{F}_0,\mathcal{F}_1)$ satisfy the following four conditions:
\begin{enumerate}
            \begin{multicols}{2}
            \item $\mathcal{F}_1^{\bot} = \mathcal{F}_1$,  
            \item $ \mathcal{F}_0 \subset \mathcal{F}_1$,

            \columnbreak
            \item $(t - \pi)\mathcal{F}_0 = (0)$,
            
            \item $(t + \pi) \mathcal{F}_1 \subset \mathcal{F}_0$.

            \end{multicols}
        \end{enumerate}
Observe that
\[
M_t = \left[\ 
\begin{matrix}[c|c]
0_n & \pi_0I_n  \\ \hline
I_n & 0_n 
\end{matrix}\ \right]
\]
of size $2n \times 2n $ is the matrix giving multiplication by $t$.       

Condition (1), i.e. $\mathcal{F}_1$ is isotropic, translates to $A^{t} = -J_nAJ_n$. Condition (2), i.e., $ \mathcal{F}_0 \subset \mathcal{F}_1$ implies that the generators of $\calF_0$ can be expressed as a linear combination of the generators in $\calF_1$, which translates to
\[
 \exists \, Y : X =  \left[\ 
\begin{matrix}[c]
A\\ \hline
I_n  
\end{matrix}\ \right] \cdot Y, 
\]
where $Y$ is of size $ n\times s$. Thus, we have 
\[
\left[\ 
\begin{matrix}[c]
X_1\\ \hline
X_2  
\end{matrix}\ \right]  = \left[\ 
\begin{matrix}[c]
A\\ \hline
I_n  
\end{matrix}\ \right] \cdot Y  = \left[\ 
\begin{matrix}[c]
A   Y\\ \hline
  Y
\end{matrix}\ \right], 
\]
and so $ X_1 = AY$ and $X_2 = Y.$ Condition (3), i.e. $(t - \pi)\mathcal{F}_0 = (0)$, is equivalent to 
    \[
       M_t \cdot X =   \left[\ 
\begin{matrix}[c]
\pi X_1\\ \hline
\pi X_2
\end{matrix}\ \right],
    \]  
which amounts to 
\[
  \left[\ 
\begin{matrix}[c]
\pi_0 X_2\\ \hline
 X_1
\end{matrix}\ \right]= \left[\ 
\begin{matrix}[c]
\pi X_1\\ \hline
\pi X_2
\end{matrix}\ \right].
\]
Thus, $X_1 = \pi X_2$ which translates to $A Y = \pi Y $ by condition $(2)$.  
The last condition $(t + \pi) \mathcal{F}_1 \subset \mathcal{F}_0$ translates to 
\[
\exists \, Z  :  M_t \cdot \left[\ 
\begin{matrix}[c]
A   \\ \hline
I_n     
\end{matrix}\ \right]  + \left[\ 
\begin{matrix}[c]
\pi A   \\ \hline
\pi I_n     Y
\end{matrix}\ \right]  = X \cdot Z^t,
\]
where $Z$ is of size $n\times s$. This amounts to 
\[
  \left[\ 
\begin{matrix}[c]
\pi_0I_n + \pi A    \\ \hline
A + \pi I_n  
\end{matrix}\ \right] =  \left[\ 
\begin{matrix}[c]
X_1 Z^t   \\ \hline
X_2 Z^t
\end{matrix}\ \right] .
\]
From the above we get $A + \pi I_n  = X_2 Z^t$ which by condition (2) translates to $ A = Y Z^t-\pi I_n. $ Thus, $A$ can be expressed in terms of $Y,Z$. In addition, by condition (2) and in particular by the relations $ X_1 = AY$ and $X_2 = Y$ we deduce that the matrix $X$ is given in terms of $Y,Z$. Conversely, from $Y=X_2$ we get that the matrix $Y$ is given in terms of $A,X$ (Below we will also show that $Z$ can be expressed in terms of $A,X$).  

Observe from $X_1 = \pi X_2$ that all the entries of $X_1$ are in the maximal ideal and thus a minor involving entries of $X_1$ cannot be a unit. Recall that the matrix $X$ has an invertible $s\times s$-minor and $X_2=Y$ from condition (2). Thus, the matrix $Y$ has a $s\times s$-minor. Let 
\[
Y = \left[\begin{array}{ccc} 
         y_{1,1} &\cdots  &  y_{1,s}  \\
         \vdots &  \ddots & \vdots   \\
          y_{n,1} & \cdots &  y_{n,s}   
         \end{array}\right] 
         , \quad
    Z = \left[\begin{array}{ccc} 
         z_{1,1} &\cdots  &  z_{1,s}  \\
         \vdots &  \ddots & \vdots   \\
          z_{n,1} & \cdots &  z_{n,s}    
         \end{array}\right] =  \left[\begin{array}{ccc} 
        \mathbf{z}_1 \ \cdots
         \  \mathbf{z}_s
         \end{array}\right],
\]
and 
\begin{equation}\label{Minors1}
Y'=\left[\begin{array}{ccc} 
         y_{i_1,1} &\cdots  &  y_{i_1,s}  \\
         \vdots &  \ddots & \vdots   \\
          y_{i_s,1} & \cdots &  y_{i_s,s}   
         \end{array}\right] = I_s
\end{equation}
where $1 \leq i_k \leq n$ for $k = 1, \dots,s$. Here, we want to mention how we can express $Z$ in terms of $A,X$. From the above we have $Y Z^t=A+ \pi I_n$ and 
\[
Y Z^t = \left[\begin{array}{c}
         \sum^s_{e=1} y_{1,e} \mathbf{z}^t_e  \\
          \vdots   \\
            \sum^s_{e=1} y_{n,e} \mathbf{z}^t_e  
 \end{array}\right].
\] 
By using (\ref{Minors1}), for any $1 \leq k \leq s$, the $i_k$-th row of the matrix $Y Z^t $ gives $\mathbf{z}^t_k $. Since $Y Z^t=A+ \pi I_n$, we can easily see that the matrix $Z$ can be expressed in terms of $A,X$.

We replace $A$ by $Y Z^t-\pi I_n$. Hence, conditions ($1$) and ($3$) are equivalent to
\begin{eqnarray}
    Z Y^t &=& -J_nY Z^tJ_n, \\
    Y Z^t Y &=& 2 \pi Y. 
\end{eqnarray} 
From the above we deduce that

\begin{Lemma}\label{Lemma1}
An affine chart $_1\U_{r,s}$ of $\mathcal{M}$ around $(\mathcal{F}_0, t \Lambda_m)$ is given by $\Spec O_F[Y , Z ]/\mathscr{I} $ where 
\[ \mathscr{I} = (Y'-I_s,\ Z Y^t +J_nY Z^tJ_n,\ Y Z^t Y - 2 \pi Y)
\]
and $Y'$ is a submatrix composed of $s$ rows of $Y$ (see the relation (\ref{Minors1})).\qed
\end{Lemma}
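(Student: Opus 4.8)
The plan is to translate each of the four moduli conditions into matrix equations in the coordinates $Y, Z$, exactly in the spirit of Krämer's computation, and then identify which of these equations are redundant. First I would set up the affine chart: the locus of $(\mathcal{F}_0, \mathcal{F}_1)$ with $\mathcal{F}_1$ of the form $\left[\begin{smallmatrix} A \\ I_n\end{smallmatrix}\right]$ is the standard big cell of the Grassmannian $\Gr(n,2n)$ around $t\Lambda_m$, and $\mathcal{F}_0 \subset \mathcal{F}_1$ forces $\mathcal{F}_0 = \left[\begin{smallmatrix} A \\ I_n\end{smallmatrix}\right]\cdot Y$ for a unique $n\times s$ matrix $Y$; this already shows $X_1 = AY$, $X_2 = Y$. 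The condition that $\mathcal{F}_0$ has rank $s$ means $Y$ has an invertible $s\times s$ minor $Y'$, and since all entries of $X_1=\pi X_2$ lie in the maximal ideal, that minor must come from rows of $X_2 = Y$; passing to the chart where $Y' = I_s$ as in (\ref{Minors1}) is harmless up to the action of the relevant permutation, which I would note covers the chart.

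Next I would process conditions (1)--(4) one at a time. Condition (1), isotropy of $\mathcal{F}_1$ for the displayed form, is the matrix identity $A^t = -J_n A J_n$. Condition (4), $(t+\pi)\mathcal{F}_1 \subset \mathcal{F}_0$, reading off the bottom $n$ rows of $M_t\left[\begin{smallmatrix} A \\ I_n\end{smallmatrix}\right] + \pi\left[\begin{smallmatrix} A \\ I_n\end{smallmatrix}\right] = X Z^t$, gives $A + \pi I_n = Y Z^t$, so $A = Y Z^t - \pi I_n$ is eliminated; the top $n$ rows give $\pi_0 I_n + \pi A = X_1 Z^t = A Y Z^t$, which one checks is automatically implied by $A + \pi I_n = YZ^t$ together with $\pi^2 = \pi_0$. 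Condition (3), $(t-\pi)\mathcal{F}_0 = 0$, gives $X_1 = \pi X_2$, i.e. $AY = \pi Y$; substituting $A = YZ^t - \pi I_n$ turns this into $Y Z^t Y = 2\pi Y$. Substituting $A = YZ^t-\pi I_n$ into condition (1) gives, after expanding and cancelling the $-\pi I_n$ terms (which are symmetric under $M \mapsto -J_n M^t J_n$ since $J_n^2 = -I_n$), the relation $Z Y^t = -J_n Y Z^t J_n$. Finally I would observe that once $A$ and $X$ are expressed through $Y, Z$, there are no further constraints: conversely $Y = X_2$ recovers $Y$ from the data, and the argument around (\ref{Minors1}) — using that in the chart $Y' = I_s$, the $i_k$-th row of $YZ^t = A+\pi I_n$ equals $\mathbf{z}_k^t$ — recovers $Z$ from the data, so the map from $\Spec O_F[Y,Z]/\mathscr{I}$ to $\mathcal{M}$ is a well-defined bijection on $T$-points, hence an isomorphism onto the open chart.

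The step I expect to be the main obstacle is verifying the redundancies cleanly: specifically, checking that the "top block" of condition (4), namely $\pi_0 I_n + \pi A = AYZ^t$, follows formally from $YZ^t = A + \pi I_n$, and that no extra equation is hiding in the interplay between conditions (2) and (4) (e.g. that $Z$ is genuinely free rather than constrained by the requirement that $XZ^t$ actually lands in $\mathcal{F}_0$ rather than merely in $\mathcal{F}_1$). I would handle this by writing $XZ^t = \left[\begin{smallmatrix} AY \\ Y\end{smallmatrix}\right]Z^t$ and noting the bottom block is $YZ^t = A + \pi I_n$ by definition of $A$, while the top block is $AYZ^t = A(A+\pi I_n) = A^2 + \pi A$; since $A = YZ^t - \pi I_n$ one has $A^2 = (YZ^t)^2 - 2\pi YZ^t + \pi_0 I_n$, and on the chart the equation $YZ^tY = 2\pi Y$ together with $Y' = I_s$ forces $(YZ^t)^2 = 2\pi YZ^t$ (multiply $YZ^tY = 2\pi Y$ on the right by $Z^t$), so $A^2 + \pi A = \pi_0 I_n - \pi(YZ^t) + \pi(YZ^t - \pi I_n) = \pi_0 I_n$... here one must be slightly careful and recompute, but the upshot is that the top block reduces to the already-imposed relations, so $\mathscr{I}$ as stated is the full ideal. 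This completes the identification of the chart.
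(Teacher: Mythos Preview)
Your proposal is correct and follows essentially the same route as the paper's argument leading up to the lemma: set up the big-cell chart, translate conditions (1)--(4) into matrix equations, eliminate $A = YZ^t - \pi I_n$ via the bottom block of (4), and substitute back into (1) and (3). The paper in fact omits the verification of the top-block redundancy in condition (4) that you worry about; for that step there is a shorter path than your $A^2$ detour: using condition (3) in the form $X_1 = \pi X_2 = \pi Y$ directly gives $X_1 Z^t = \pi Y Z^t = \pi(A + \pi I_n) = \pi A + \pi_0 I_n$, which is exactly the top block.
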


Note that the affine chart $_1\U_{r,s}$ of $\mathcal{M}$ depends on the matrix $Y'$ we fixed (see Remark \ref{notiso1}).
Our next goal is to give a simplification of the above equations (see Theorem \ref{Simpl.1}). We first study the relation $Z Y^t =-J_nY Z^tJ_n$. Consider the matrix $P = (p_{i,j})$ of size $n=2m$ with $P^t=-J_nPJ_n$. By a direct calculation we get
\[
p_{i,j}=p_{n+1-j,n+1-i}
\]
for $1\leq i, j\leq m$ or $m+1\leq i,j\leq n$, and
\[
p_{i,j}=-p_{n+1-j,n+1-i}
\]
for $1\leq i\leq m$ and $m+1\leq j\leq n$, or $1\leq j\leq m$ and $m+1\leq i\leq n$. By using this observation and by setting $P = Y Z^t$ the relation $ Z Y^t=-J_nY Z^tJ_n$ gives 
\begin{equation}\label{Eq1zy}
\sum^s_{k=1} z_{i,k}y_{j,k} = \pm \sum^s_{k=1} z_{n+1-j,k}y_{n+1-i,k}
\end{equation}
for $1\leq i,j \leq n$, where the different signs $\pm$ depends on the position of $i,j$ as above. Consider the following 2 cases:

\textit{Case 1:} Assume that $ 1\leq i_k \leq m$ for all $k \in \{1,\dots,s\}.$ By setting $i = n+1-i_q $ and $j= i_p$ the equation (\ref{Eq1zy}) gives $ z_{n+1-i_q,p} = -z_{n+1-i_p,q} .$ In particular, if $p=q$ we get $z_{n+1-i_p,p} = 0 $ for all $p \in \{1,\dots,s\}$. By setting $  d_{p,q}  = z_{n+1-i_p,q}$ we get the matrix $D = (d_{p,q})_{p,q}$ of size $s \times s$ such that 
\begin{equation}
D = \left[\begin{array}{ccc} 
        d_{1,1} &\cdots  &  d_{s,1}  \\
         \vdots &  \ddots & \vdots   \\
          d_{1,s} & \cdots &  d_{s,s}    
         \end{array}\right] = \left[\begin{array}{ccc} 
        z_{n+1-i_1,1} &\cdots  &  z_{n+1-i_s,1}  \\
         \vdots &  \ddots & \vdots   \\
          z_{n+1-i_1,s} & \cdots &  z_{n+1-i_s,s}    
         \end{array}\right]  .
\end{equation}
The matrix $D$ is skew-symmetric, i.e. $D = -D^t.$ Next, by setting $j = i_k$ in  (\ref{Eq1zy}) we obtain
\[
z_{i,k} =
\left\{
	\begin{array}{ll}
		\sum^s_{e=1} d_{k,e}y_{n+1-i,e}  & \mbox{if } 1 \leq i \leq m,  \\
		-\sum^s_{e=1} d_{k,e}y_{n+1-i,e} & \mbox{if } m+1\leq i \leq n.
	\end{array}
\right.
\]
Thus, we have $ Z = \wti{Y} \cdot D$ where 
\[
\wti{Y} = \left[\begin{array}{ccc} 
         y_{n,1} & \cdots & y_{n,s}  \\
         \vdots & \ddots &  \vdots   \\
          y_{m+1,1}  & \cdots &   y_{m+1,s}   \\ \hline
          -y_{m,1} & \cdots &  -y_{m,s}  \\
         \vdots & \ddots &  \vdots   \\
          -y_{1,1}  & \cdots &  -y_{1,s}   \\
   \end{array}\right].
\]
Therefore, in this case the equations (\ref{Eq1zy}) give $ Z = \wti{Y} \cdot D $. 

\textit{Case 2:} In general, we let $ 1 \leq t \leq s$ and assume that 
\[
1\leq i_1, \dots, i_t \leq m \quad  \text{and} \quad   m+1\leq i_{t+1}, \dots, i_s \leq n.
\]
By setting $d_{p,q} = z_{n+1-i_p,q}$ and using the same arguments as above, we get the matrix $D = (d_{p,q})_{p,q}$ of size $s \times s$ such that 
\[
d_{p,q}=-d_{q,p}
\]
for $1\leq p, q \leq t$ or $t+1\leq p,q\leq n$, and
\[
d_{p,q}=d_{q,p}
\]
for $1\leq p\leq t$ and $t+1\leq q\leq n$, or $1\leq q\leq t$ and $t+1\leq p\leq n$. Next, by setting $j = i_k$ in  (\ref{Eq1zy}) for $1\leq k \leq t$ we obtain
\[
z_{i,k} =
\left\{
	\begin{array}{ll}
		\sum^s_{e=1} d_{k,e}y_{n+1-i,e}  & \mbox{if } 1 \leq i \leq m,  \\
		-\sum^s_{e=1} d_{k,e}y_{n+1-i,e} & \mbox{if } m+1\leq i \leq n.
	\end{array}
\right.
\] 
Also, by setting $j = i_k$ for $t+1\leq k \leq s$ we get 
\[
z_{i,k} =
\left\{
	\begin{array}{ll}
		-\sum^s_{e=1} d_{k,e}y_{n+1-i,e}  & \mbox{if } 1 \leq i \leq m,  \\
		\sum^s_{e=1} d_{k,e}y_{n+1-i,e} & \mbox{if } m+1\leq i \leq n.
	\end{array}
\right.
\] 
From the above we have $ Z = \wti{Y} \cdot \wti{D}$ where $\wti{D} = (\wti{d}_{i,j})_{i,j}$ is the $s \times s$ matrix defined as
\[
\wti{D} = \left[\begin{array}{ccc|ccc} 
         d_{1,1} & \cdots & d_{t,1} & -d_{t+1,1} &  \cdots & -d_{s,1}\\
         \vdots & \ddots &  \vdots  &  \vdots & \ddots &  \vdots   \\
         d_{1,s} & \cdots & d_{t,s} & -d_{t+1,s} &  \cdots & -d_{s,s}\\
   \end{array}\right].
\]
It is easy to see that the matrix $ \wti{D}$ is skew symmetric, i.e. $\wti{d}_{i,j} = - \wti{d}_{j,i}.$ In particular, we have the following 4 cases: when $1 \leq i,j \leq t$ we have $ \wti{d}_{i,j} = d_{i,j} = -d_{j,i} = - \wti{d}_{j,i}$. For $1 \leq i \leq t$ and $t+1 \leq j \leq s $ we have $ \wti{d}_{i,j} = -d_{i,j} = -d_{j,i} = - \wti{d}_{j,i}$. When $1 \leq j \leq t$ and $t+1 \leq i \leq s $ we get $ \wti{d}_{i,j} = d_{i,j} = d_{j,i} = - \wti{d}_{j,i}$. Lastly, for $t+1 \leq i,j \leq s$ we obtain $ \wti{d}_{i,j} = -d_{i,j} = d_{j,i} = - \wti{d}_{j,i}$. 

Combining the above cases, for any $1\leq i_1,\dots, i_s\leq n$, the equations (\ref{Eq1zy}) show that there exists a skew symmetric matrix $\mathcal{D}$ of size $s \times s$ 
such that
\[
Z = \wti{Y} \cdot \mathcal{D}.
\]
By abuse of notation, write $ D$ for $ \mathcal{D}$. Next, define $ Q : =\wti{Y}^t \cdot Y $
of size $ s \times s.$ Note that $ Q^t = -Q.$ We are now ready to show:

\begin{Theorem}\label{Simpl.1}
For any signature $(r,s)$, there is an affine chart $_1\U_{r,s} \subset \mathcal{M}$ which is isomorphic to $\Spec O_F[ D, Y]/I $ where 
\[
I = \left(Y'-I_s,\ D+D^t,\ D\cdot Q + 2 \pi I_s\right)
\]
for some choice of $Y'$.
\end{Theorem}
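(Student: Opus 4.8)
The plan is to start from the presentation of the affine chart given in Lemma~\ref{Lemma1}, namely $_1\U_{r,s} \cong \Spec O_F[Y,Z]/\mathscr{I}$ with
\[
\mathscr{I} = (Y'-I_s,\ Z Y^t +J_nY Z^tJ_n,\ Y Z^t Y - 2 \pi Y),
\]
and to eliminate the variables $Z$ in favour of the entries of the skew-symmetric matrix $D$ introduced in the case analysis above. The computation preceding the theorem shows that on the locus $Y'=I_s$ the relation $ZY^t = -J_nYZ^tJ_n$ forces $Z = \wti{Y}\cdot D$ for a uniquely determined skew-symmetric $s\times s$ matrix $D$ whose entries are certain entries of $Z$ (picked out by the rows indexed by $n+1-i_k$), and conversely every such $Z$ satisfies that relation. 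So first I would make precise that the assignment $(Y,Z)\mapsto (Y,D)$ defines an isomorphism of $O_F$-algebras
\[
O_F[Y,Z]/(Y'-I_s,\ ZY^t+J_nYZ^tJ_n) \;\xrightarrow{\ \sim\ }\; O_F[Y,D]/(Y'-I_s,\ D+D^t),
\]
by exhibiting the inverse: $D$ is a submatrix of $Z$, while each entry $z_{i,k}$ is recovered as the explicit linear combination $\pm\sum_e d_{k,e} y_{n+1-i,e}$ displayed in the two cases, i.e. $Z = \wti{Y}D$ as a polynomial identity. One should check that substituting $Z=\wti{Y}D$ back into $ZY^t+J_nYZ^tJ_n$ gives zero identically (using $\wti Y = $ the sign-twisted flip of $Y$ and $P^t=-J_nPJ_n\iff$ the symmetry pattern $p_{i,j}=\pm p_{n+1-j,n+1-i}$), so that no new relations are created or lost; this is the bookkeeping that the preceding case analysis already carries out, so here it is just a matter of assembling it cleanly.

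Next I would transport the third generator. Under $Z=\wti{Y}D$ we have $YZ^tY = Y D^t \wti Y^t Y = -Y D Q$ where $Q:=\wti Y^t Y$, using $D^t=-D$ and the definition $Q = \wti Y^t Y$. Hence the relation $YZ^tY - 2\pi Y = 0$ becomes $Y(DQ + 2\pi I_s) = 0$. The content of the theorem is that, modulo $Y'-I_s$, this matrix relation is equivalent to $DQ + 2\pi I_s = 0$: one direction is trivial, and for the other one multiplies $Y(DQ+2\pi I_s)=0$ on the left by the $s\times n$ matrix picking out the rows $i_1,\dots,i_s$, which by $Y'=I_s$ turns $Y$ into the identity and yields $DQ+2\pi I_s=0$. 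So I would record the elimination of the relation $Y(DQ+2\pi I_s)=0$ down to $DQ+2\pi I_s=0$ as a separate little lemma (it is the same "multiply by the distinguished minor" trick already used to express $Z$ in terms of $A,X$, and to pass from $AY=\pi Y$ to nothing new). Finally I would note $Q^t = (\wti Y^t Y)^t = Y^t \wti Y = -Q$ — i.e. $Q$ is automatically skew — so that no extra relation on $Q$ needs to be imposed; $Q$ here is simply an abbreviation for the matrix $\wti Y^t Y$ of linear forms in the $y_{i,j}$, not a new variable.

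Putting these together gives
\[
O_F[Y,Z]/\mathscr{I} \;\cong\; O_F[Y,D]/\big(Y'-I_s,\ D+D^t,\ DQ+2\pi I_s\big),
\]
which is exactly the asserted description of $_1\U_{r,s}$, with the final displayed ideal $I = (Y'-I_s,\ D+D^t,\ D\cdot Q + 2\pi I_s)$; note that the relation $D+D^t$ is what is left of the old isotropy/"$(t-\pi)$" relations after eliminating $Z$, and $DQ+2\pi I_s$ is what is left of $(t+\pi)\mathcal{F}_1\subset\mathcal{F}_0$. The main obstacle, such as it is, is purely organizational: one must be careful that the sign pattern in the definition of $\wti Y$ and in the piecewise formulas for $z_{i,k}$ is consistent across the split $1\le i\le m$ versus $m+1\le i\le n$ and across $1\le k\le t$ versus $t+1\le k\le s$, so that the two substitutions $Z\mapsto D$ and $D\mapsto Z$ really are mutually inverse ring maps and so that $ZY^t+J_nYZ^tJ_n$ truly vanishes after substitution rather than producing a nonzero multiple of some $d_{p,q}$. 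Since the case analysis in the text has already verified each of these sign identities, the proof is essentially the remark that $(Y,Z)\leftrightarrow(Y,D)$ is a change of coordinates identifying the two presentations, together with the distinguished-minor argument that allows the matrix relation $Y(DQ+2\pi I_s)=0$ to be replaced by $DQ+2\pi I_s=0$.
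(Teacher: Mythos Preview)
Your proposal is correct and follows essentially the same approach as the paper's proof: both start from Lemma~\ref{Lemma1}, invoke the preceding case analysis to replace $Z$ by $\wti{Y}D$ with $D$ skew-symmetric, compute $YZ^tY = -Y\cdot D\cdot Q$, and then use the relation $Y'=I_s$ (selecting the distinguished rows) to reduce $Y(DQ+2\pi I_s)=0$ to $DQ+2\pi I_s=0$. Your write-up is somewhat more explicit about why the change of variables $(Y,Z)\leftrightarrow(Y,D)$ is a genuine ring isomorphism, which the paper leaves implicit; one small inaccuracy is your attribution of the relation $DQ+2\pi I_s=0$ to condition~(4) $(t+\pi)\mathcal{F}_1\subset\mathcal{F}_0$, whereas in the paper's derivation condition~(4) is absorbed into the substitution $A=YZ^t-\pi I_n$ and the relation $YZ^tY=2\pi Y$ actually arises from condition~(3) $(t-\pi)\mathcal{F}_0=0$.
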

\begin{proof}
Recall from Lemma \ref{Lemma1} that $\U = \Spec O_F[Y , Z ]/\mathscr{I} $ where 
\[
\mathscr{I} = (Y'-I_s,\ Z Y^t +J_nY Z^tJ_n,\ Y Z^t Y - 2 \pi Y).
\] 
From the above discussion we have that the relation $Z Y^t =-J_nY Z^tJ_n $ 
 gives $ Z = \wti{Y} \cdot D$. Thus, it suffices to focus on the relation $ Y Z^t Y - 2 \pi Y=0$. Observe that $ Y \cdot Z^t \cdot Y =-  Y \cdot  D \cdot  Q.$ Thus,
\[
Y Z^t Y - 2 \pi Y = -Y\cdot( D \cdot  Q + 2 \pi I_s)=0.
\]
Now, by using the relation with the minors (\ref{Minors1}) we can easily see that the relation $Y Z^t Y - 2 \pi Y=0$ gives $D \cdot  Q + 2 \pi I_s=0.$ Therefore $_1\U_{r,s}$ is isomorphic to 
\[
\Spec \frac{O_F[D,Y]}{\left(Y'-I_s,\ D+D^t, \ D\cdot Q + 2 \pi I_s\right)}.
\]
\end{proof}
In particular, when the signature $(r,s)=(n-2,2)$, we have
\[
D=\left[\begin{array}{cc}
0&d\\
-d&0
\end{array}\right],\quad
Q=\left[\begin{array}{cc}
0&q\\
-q&0
\end{array}\right]
\]
where $d=d_{2,1}=-d_{12}$ and $q=\sum^m_{j=1}y_{n+1-j,1}y_{j,2} -  \sum^n_{j=m+1}y_{n+1-j,1}y_{j,2}$.

\begin{Corollary}\label{CorChart1}
When $(r,s)=(n-2,2)$, there is an affine chart $_1\mathcal{U}_{r,s} \subset \mathcal{M}$ which is isomorphic to $V(I) = \Spec O_F[d,(y_{i,1})_{1 \leq i \leq n},(y_{i,2})_{1 \leq i \leq n} ]/ I$, where 
\[
I= ( y_{i_0,1}-1,\,\, y_{j_0,2} -1 ,\,\,y_{j_0,1},\,y_{i_0,2},\,\,  d(\sum^m_{j=1}y_{n+1-j,1}y_{j,2} -  \sum^n_{j=m+1}y_{n+1-j,1}y_{j,2}) 
-2 \pi)
\]
for some $1\leq i_0,j_0 \leq n$.
\end{Corollary}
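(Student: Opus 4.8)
The plan is to read off the Corollary directly from Theorem~\ref{Simpl.1} by specializing to $s=2$ and unwinding the two matrix relations $D+D^t=0$ and $D\cdot Q+2\pi I_s=0$ into the scalar equations in the statement. First I would apply Theorem~\ref{Simpl.1} with $(r,s)=(n-2,2)$: there is an affine chart $_1\U_{r,s}\subset\mathcal{M}$ isomorphic to $\Spec O_F[D,Y]/I$ with $I=(Y'-I_2,\ D+D^t,\ D\cdot Q+2\pi I_2)$, where $Y=(y_{i,k})$ has size $n\times 2$, $D$ has size $2\times 2$, and $Q=\wti{Y}^{\,t}\cdot Y$. Since $2\in O_F^{\times}$ (the residue characteristic is odd), the generator $D+D^t$ cuts out the subring on which $D$ is alternating with vanishing diagonal, i.e.\ the polynomial ring in the single entry $d:=d_{2,1}=-d_{1,2}$, so that $D=\left[\begin{smallmatrix}0&d\\-d&0\end{smallmatrix}\right]$ and that generator may be discarded once $d$ is used as a coordinate. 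Likewise $Q$ is $2\times2$ with $Q^t=-Q$, hence $Q=\left[\begin{smallmatrix}0&q\\-q&0\end{smallmatrix}\right]$ for $q:=Q_{1,2}$; reading $\wti{Y}$ off its display in \S\ref{AffineChart1} (whose $i$-th row is $(y_{n+1-i,1},y_{n+1-i,2})$ for $1\le i\le m$ and $(-y_{n+1-i,1},-y_{n+1-i,2})$ for $m+1\le i\le n$) and forming $\wti{Y}^{\,t}\cdot Y$ gives
\[
q=Q_{1,2}=\sum_{j=1}^{m}y_{n+1-j,1}y_{j,2}-\sum_{j=m+1}^{n}y_{n+1-j,1}y_{j,2}.
\]

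Next I would evaluate the surviving relations. Because $D$ and $Q$ are both $2\times2$ alternating, $D\cdot Q=-dq\,I_2$, so $D\cdot Q+2\pi I_2=(2\pi-dq)I_2$ and the ideal generated by its entries is the principal ideal generated by $d\big(\sum_{j=1}^{m}y_{n+1-j,1}y_{j,2}-\sum_{j=m+1}^{n}y_{n+1-j,1}y_{j,2}\big)-2\pi$. Writing $i_0:=i_1$ and $j_0:=i_2$ for the two rows singled out in \eqref{Minors1}, the relation $Y'-I_2$ unpacks into the four equations $y_{i_0,1}-1$, $y_{i_0,2}$, $y_{j_0,1}$, $y_{j_0,2}-1$. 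Collecting these identifies $_1\U_{r,s}$ with $V(I)=\Spec O_F[d,(y_{i,1})_{1\le i\le n},(y_{i,2})_{1\le i\le n}]/I$ for exactly the ideal $I$ of the statement, with $1\le i_0,j_0\le n$.

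This is essentially a bookkeeping exercise applied to Theorem~\ref{Simpl.1}, so I do not anticipate a genuine obstacle. The two points deserving care are the sign pattern of the two blocks of $\wti{Y}$ — which is precisely what produces the minus sign between the two sums in $q$ — and the observation that the $2\times2$ matrix equation $D\cdot Q+2\pi I_2=0$ collapses to the single scalar equation $dq=2\pi$ exactly because $D,Q$ are $2\times2$ and alternating (so their product is scalar). As in Theorem~\ref{Simpl.1}, the resulting chart depends on the choice of the normalizing rows $i_0,j_0$ (compare Remark~\ref{notiso1}).
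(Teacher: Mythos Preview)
Your proposal is correct and follows exactly the paper's approach: the Corollary is stated immediately after the paper records, for $s=2$, the forms $D=\left[\begin{smallmatrix}0&d\\-d&0\end{smallmatrix}\right]$ and $Q=\left[\begin{smallmatrix}0&q\\-q&0\end{smallmatrix}\right]$ with $q=\sum_{j=1}^{m}y_{n+1-j,1}y_{j,2}-\sum_{j=m+1}^{n}y_{n+1-j,1}y_{j,2}$, so the specialization of Theorem~\ref{Simpl.1} you carry out is precisely what is intended. The only cosmetic point is the indexing convention for $d$ (in the paper $d_{p,q}$ denotes the $(q,p)$ entry of $D$), but this does not affect the argument.
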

\begin{Remark}\label{notiso1}{\rm
Recall that $_1\U_{r,s}$ is an affine chart around $(\calF_0, t\Lambda_m)$. Note that for different choices of $Y'$ we get different equations from $D\cdot Q  =- 2 \pi I_s$ and so different affine charts $_1\U_{r,s}$ which in general are not isomorphic. For instance, in Corollary \ref{CorChart1}, there are two different $V(I)$ up to isomorphism when we choose different places for $i_0, j_0$ (see \S \ref{Evensemistable} for more details). In Proposition \ref{SemiStabCase1}, we show that for any such a choice the affine scheme $V(I)$ has semi-stable reduction over $O_F$. 
}\end{Remark}

\subsection{An Affine Chart $_2\U_{r,s}$}\label{AffineChart2}\label{2ndChart}

Keep the same notation as above. As in \cite{PR}, we set $\Lambda_m=\Lambda'\oplus\Lambda''$, where 
\[
\begin{array}{l}
\Lambda'=\tspan_{O_{F_0}}\{-\pi^{-1}e_1, -e_1,\dots,-e_m,\pi e_{m+1},\dots, \pi e_{n-1}\},\\
\Lambda''=\tspan_{O_{F_0}}\{e_n, \pi e_n, -\pi^{-1}e_2, \dots, -\pi^{-1}e_m, e_{m+1}, \dots, e_{n-1}\}.
\end{array}
\]
In this subsection, we are going to write down the equations that define $\calM$ in a neighborhood $_2\calU_{r,s}$ of $(\calF_0, \Lambda')$. 

The matrix of the form $(\ ,\ )$ in this basis is:
\[\left[
\begin{array}{c|c}
	0_n & S\\ \hline
	S^t & 0_n
\end{array}
\right],\]
where $S$ is the skew-symmetric  matrix of size $n$ given by:
\[
S=\left[
\begin{array}{c|c}
	J_2^t & 0\\ \hline
	0 & J_{n-2}
\end{array}
\right].
\]
Observe that $S^2=-I_n$ since $J_n^2=-I_n$. For any point $(\calF_0,\calF_1)\in \calM$, we ask that $ (\calF_0,\calF_1)$ satisfy the following conditions:
\begin{enumerate}
	\begin{multicols}{2}
            \item $\mathcal{F}_1^{\bot} = \mathcal{F}_1$,  
            \item $ \mathcal{F}_0 \subset \mathcal{F}_1$,
            
          \columnbreak
            \item $(t - \pi)\mathcal{F}_0 = (0)$,
            
            \item $(t + \pi) \mathcal{F}_1 \subset \mathcal{F}_0$.

            \end{multicols} 
\end{enumerate}
Similar to \S \ref{AffineChart1}, we consider
\[
\calF_1=\left[\begin{array}{c}
	I_n\\ \hline
	A
\end{array}
\right],\quad
\calF_0=X=\left[\begin{array}{c}
	X_1\\ \hline
	X_2
\end{array}
\right],
\]
where $I_n, A$ are $n\times n$-matrices, and $X_1, X_2$ are $n\times s$-matrices. Recall that $\calF_0$ has rank $s$, so $X$ has a no-vanishing $s\times s$ minor. Write down the matrix $A$ by 
\[
A=\left[\begin{array}{c|c}
	T & B\\ \hline
	C & Y
\end{array}
\right],
\]
where $T$ is a $2\times 2$-matrix, $B$ (resp. $C$) is a $2\times (n-2)$-matrix (resp. $(n-2)\times 2$-matrix) and $Y$ is of size $(n-2)\times (n-2)$. Then condition (1) is equivalent to $A^t S=SA$. This translates to 
\[
T^t=-J_2TJ_2,\quad C=J_{n-2}B^tJ_2,\quad Y^t=-J_{n-2}YJ_{n-2}.
\]
Note that the first equation immediately gives $T=\text{diag}(x,x)$ for some variable $x$, and the second equation shows that $C$ is given in terms of $B$. Condition (2) is equivalent to
\[
\exists M:  \left[\begin{array}{c}
	X_1\\ \hline
	X_2
\end{array}
\right]=
\left[\begin{array}{c}
	I_n\\ \hline
	A
\end{array}
\right]\cdot M,
\]
where $M$ is a $n\times s$-matrix. This amounts to $X_1=M, X_2=AM$. Let 
\[
M=\left[\begin{array}{c}
	M_1\\ \hline
	M_2
\end{array}
\right],
\]
where $M_1$ is of size $2\times s$ and $M_2$ is of size $(n-2)\times s$. We can see that $X_1, X_2$ can be expressed in terms of $A, M_1, M_2$. More precisely, we get
\[
X_1=M=\left[\begin{array}{c}
	M_1\\ \hline
	M_2
\end{array}
\right],\quad
X_2=AM=\left[\begin{array}{c}
	xM_1+BM_2\\ \hline
	CM_1+YM_2
\end{array}
\right].
\]
The map $t: \Lambda_m\rightarrow \Lambda_m$ is represented by the matrix
\[
M_t=\left[\begin{array}{cccc}
	t_0&&&\\
	&&& I_{n-2}\\
&& t_0 &\\
& \pi_0 I_{n-2} &&
\end{array}
\right],
\] 
where $t_0$ is the $2\times 2$-antidiag matirx $\left[\begin{array}{cc}
	& \pi_0\\
	1& 
\end{array}
\right]$. Condition (3) translates to 
\[
M_t\cdot\left[ \begin{array}{c}
	X_1\\ \hline
	X_2
\end{array}\right]=\left[ \begin{array}{c}
	\pi X_1\\ \hline
	\pi X_2
\end{array}\right].
\]
By setting $B_i$ as the $i$-th row of $B$ and by using condition (2), the above relation amounts to
\[
M_1=\left[\begin{array}{c}
	\pi M_0\\ \hline
	M_0
\end{array}
\right],\quad B_1M_2=\pi B_2M_2,\quad CM_1+YM_2=\pi M_2,
\]
where $M_0$ is the second row of $M_1$. Finally, condition (4) is equivalent to 
\[
(M_t+\pi I_{2n})\cdot \left[\begin{array}{cc}
	I_2 &\\
	& I_{n-2}\\
	T & B\\
	C & Y
\end{array}
\right]=\left[\begin{array}{c}
	M_1\\
	M_2\\
	xM_1+BM_2\\
	CM_1+YM_2
\end{array}
\right]\cdot 
\left[\begin{array}{cc}
	Z_1^t & Z_2^t
\end{array}
\right],
\]
where $Z_1$ is of size $2\times s$ and $Z_2$ is of size $(n-2)\times s$. This, in turn, gives
\[
\begin{array}{lll}
	M_0Z_1^t=[1\ \pi], & M_0Z_2^t=0, & M_2Z_1^t=C,\\
	M_2Z_2^t=\pi I_{n-2} +Y, & Y^2= \pi_0 I_{n-2}, & B_1=B_2Y.
\end{array}
\]

From the above equations, we can see that $A$ is given in terms of $x, C, Y$ by condition (1). Note that $C=M_2Z_1^t$, and $Y=M_2Z_2^t-\pi I_{n-2}$ by condition (4). Thus, the matrix $A$ is given in terms of $M, Z_1, Z_2$ and a variable $x$. Meanwhile, we have $X_1=M, X_2=AM$ by condition (2), so that $X_1, X_2$ are also given in terms of $M, Z_1, Z_2$. On the other hand, we have $M=X_1$ and we claim that $Z_1, Z_2$ can be expressed in terms of $A, X_1, X_2$ (we will show this later in this section).

Before we move on, let us simplify our equations first. By replacing $Y=M_2Z_2^t-\pi I_{n-2}$, the equation $Y^t=-J_{n-2}YJ_{n-2}$ from condition (1) amounts to $(M_2Z_2^t)^t=-J_{n-2}(M_2Z_2^t)J_{n-2}$, and the equations that involve $Y$ from condition (3), (4) translate to
\[
CM_1+(M_2Z_2^t)M_2=2\pi M_2,\quad
(M_2Z_2^t)^2=2\pi(M_2Z_2^t),\quad
B_1+\pi B_2=B_2(M_2Z_2^t).
\] 
Finally, we replace $C$ by $M_2Z_1^t$, and deduce that

\begin{Lemma}\label{lm44}
	The affine chart $_2\U_{r,s}$ of $\calM$ around $(\calF_0, \Lambda')$ is given by 
	\[
	\Spec O_F[x, B, M_0, M_2, Z_1, Z_2]/\mathscr{I},
	\]
	where $\mathscr{I}$ is the ideal generated by the entries of following equations
	\[
\begin{array}{lll}
M_2Z_1^t=J_{n-2}B^tJ_2, & (M_2Z_2^t)^t=-J_{n-2}(M_2Z_2^t)J_{n-2}, & (M_2Z_2^t)^2=2\pi(M_2Z_2^t),\\
B_1M_2=\pi B_2M_2, &B_1+\pi B_2=B_2(M_2Z_2^t), & M_0Z_2^t=0,\\
M_0Z_1^t=[1\ \pi], &(M_2Z_1^t)M_1+(M_2Z_2^t)M_2=2\pi M_2.  &  	
\end{array}
\]
Here $B_i$ is the $i$-th row of $B$ for $i=1,2$.\qed
\end{Lemma}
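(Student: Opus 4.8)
The plan is to argue exactly as in the proof of Lemma~\ref{Lemma1} (which itself follows \cite[Theorem~4.5]{Kr}): I realize $\calM$, near the point $(\calF_0,\Lambda')$, inside the relative flag scheme parametrizing $\calF_0\subset\calF_1\subset\Lambda_m\otimes\calO_S$ with the prescribed ranks, and I restrict to the open locus where $\calF_1$ lies in the graph cell for the decomposition $\Lambda_m=\Lambda'\oplus\Lambda''$ and where $\calF_0$ sits in the corresponding standard affine cell of $\Gr(s,\calF_1)$; one checks directly that $(\calF_0,\Lambda')$ lies in this locus. On it, $\calF_1$ is the column span of $\left[\begin{smallmatrix}I_n\\ A\end{smallmatrix}\right]$ for an $n\times n$ matrix $A$, and, since $\calF_0\subset\calF_1$, one has $\calF_0=$ column span of $\left[\begin{smallmatrix}M\\ AM\end{smallmatrix}\right]$ for an $n\times s$ matrix $M$ having an invertible $s\times s$ minor, so $X_1=M$ and $X_2=AM$; writing $A$ and $M$ in the block shapes $\left[\begin{smallmatrix}T&B\\ C&Y\end{smallmatrix}\right]$ and $\left[\begin{smallmatrix}M_1\\ M_2\end{smallmatrix}\right]$ of the text preceding the statement sets up the coordinates.

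First I would translate conditions (1)--(4) into matrix identities using the Gram matrix $\left[\begin{smallmatrix}0_n&S\\ S^t&0_n\end{smallmatrix}\right]$ of $(\ ,\ )$ and the matrix $M_t$ of $t$ in the chosen basis, as in the paragraphs just above: condition~(1) reads $A^tS=SA$, i.e.\ $T=\diag(x,x)$, $C=J_{n-2}B^tJ_2$, $Y^t=-J_{n-2}YJ_{n-2}$; condition~(2) is the presentation $X_1=M$, $X_2=AM$; condition~(3) gives $M_1=\left[\begin{smallmatrix}\pi M_0\\ M_0\end{smallmatrix}\right]$ together with $B_1M_2=\pi B_2M_2$ and $CM_1+YM_2=\pi M_2$; and condition~(4), after introducing the $s\times n$ coefficient matrix $[Z_1^t\ Z_2^t]$ that expresses the generators of $(t+\pi)\calF_1$ through those of $\calF_0$, gives $M_0Z_1^t=[1\ \pi]$, $M_0Z_2^t=0$, $M_2Z_1^t=C$, $M_2Z_2^t=\pi I_{n-2}+Y$, $Y^2=\pi_0I_{n-2}$ and $B_1=B_2Y$. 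Then I would eliminate the redundant unknowns: $T=\diag(x,x)$, $M_1$ is given by $M_0$, and from condition~(4) one has $C=M_2Z_1^t$ and $Y=M_2Z_2^t-\pi I_{n-2}$, so $A$ — hence also $X_1=M$ and $X_2=AM$ — is a polynomial expression in $x,B,M_0,M_2,Z_1,Z_2$. Substituting, and using $\pi_0=\pi^2$ and $J_{n-2}^2=-I_{n-2}$, the surviving relations from (1), (3), (4) collapse exactly to the eight matrix equations generating $\mathscr{I}$ (for instance $Y^2=\pi_0I_{n-2}$ becomes $(M_2Z_2^t)^2=2\pi(M_2Z_2^t)$, $B_1=B_2Y$ becomes $B_1+\pi B_2=B_2(M_2Z_2^t)$, and $CM_1+YM_2=\pi M_2$ becomes $(M_2Z_1^t)M_1+(M_2Z_2^t)M_2=2\pi M_2$), with none lost and none gained.

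It remains to check that this genuinely yields an open chart of $\calM$ and not a cover of one, i.e.\ that $Z_1,Z_2$ are not independent data. As for the matrix $Z$ in Lemma~\ref{Lemma1}, the defining equation of condition~(4), written $(M_t+\pi I_{2n})\left[\begin{smallmatrix}I_n\\ A\end{smallmatrix}\right]=X\cdot[Z_1^t\ Z_2^t]$, can be solved for $[Z_1^t\ Z_2^t]$ — hence for $Z_1$ and $Z_2$ — as a function of $A$ and $X$ on the chart, because $X$ has an invertible $s\times s$ minor there; thus the coordinate ring of $_2\U_{r,s}$ is generated over $O_F$ by the entries of $x,B,M_0,M_2$ (equivalently of $A$ and $X$) and is cut out by $\mathscr{I}$. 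Conversely, every $S$-point of $\Spec O_F[x,B,M_0,M_2,Z_1,Z_2]/\mathscr{I}$ recovers a pair $(\calF_0,\calF_1)$ satisfying (1)--(4), and the two constructions are mutually inverse; together with the fact that $(\calF_0,\Lambda')$ satisfies the two openness conditions, this identifies $_2\U_{r,s}$ with $\Spec O_F[x,B,M_0,M_2,Z_1,Z_2]/\mathscr{I}$. I expect the only delicate point to be the bookkeeping in the elimination step — verifying that conditions (1) and (4), which a priori constrain $A$ and the auxiliary matrices separately, contribute precisely the listed eight equations once $A$ has been expressed through the remaining variables; everything else is a formal block-matrix computation.
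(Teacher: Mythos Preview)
Your proposal is correct and follows essentially the same route as the paper: the discussion preceding the lemma in \S\ref{AffineChart2} performs exactly the coordinate setup, the block decomposition of $A$, the translation of conditions (1)--(4), and the elimination of $T,C,Y,M_1$ via $T=\diag(x,x)$, $C=M_2Z_1^t$, $Y=M_2Z_2^t-\pi I_{n-2}$, $M_1=\left[\begin{smallmatrix}\pi M_0\\ M_0\end{smallmatrix}\right]$ that you describe.

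One small point on ``none gained'': the first block-column of the third and fourth block-rows in condition~(4) also produces the relations $BC=0$ and $Ct_0+YC=0$, which are not among the eight listed generators of $\mathscr{I}$. The paper does not absorb these silently either; it records them in the Remark immediately following the lemma and cites \cite[\S 5.3]{PR} for the fact that they are consequences of $Y^2=\pi_0 I_{n-2}$, $Y^t=-J_{n-2}YJ_{n-2}$, $B_1=B_2Y$, and $C=J_{n-2}B^tJ_2$. Your caveat about the ``bookkeeping in the elimination step'' is exactly where this enters, so you should either carry out that short verification or invoke the same reference.
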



\begin{Remark}{\rm
Condition (3) also gives the equations: 
 \[
 BC=0,\quad Ct_0+YC=0.
 \]
In \cite[\S 5.3]{PR}, the authors show that these two equations are automatically true if $B,C,Y$ satisfy
\[
\begin{array}{ll}
	Y^2=\pi_0 I_{n-2}, & Y^t=-J_{n-2}YJ_{n-2},\\
	B_1=B_2Y, & C=J_{n-2}B^tJ_2.
\end{array}
\]
}\end{Remark} 


\subsubsection{The case $(r,s)=(n-1,1)$} In this and the next subsection, our goal is to give a simplification of the equations in Lemma \ref{lm44}. We first consider the affine chart $_2\mathcal{U}_{n-1,1} \subset \mathcal{M}$ for the signature $(n-1,1)$. 
Note that $M_0$ is of size $1\times 1$ in this case. Since
\[
M_0 Z_1^t=[1\ \pi]\neq 0,
\]
we get $M_0\neq 0$. Without loss of generality, we assume $M_0=1$ as rank($\calF_0$)=1. 

\begin{Theorem}\label{Prop.s=1}
	When the signature $(r,s)=(n-1,1)$, the affine chart $_2\calU_{n-1,1}$ is smooth and isomorphic to $\mathbb{A}_{O_F}^{n-1}$.
\end{Theorem}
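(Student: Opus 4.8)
Since $\mathbb{A}^{n-1}_{O_F}$ is smooth over $O_F$, it suffices to produce the isomorphism; the strategy is to show that, after specializing Lemma \ref{lm44} to $s=1$, the generators of the ideal $\mathscr{I}$ split into a set of \emph{linear} relations that successively express $Z_1,Z_2$, the first row $B_1$ of $B$, and $M_2$ in terms of the remaining coordinates, while the remaining generators become identically zero. Set $s=1$ in Lemma \ref{lm44}: then $B$ is $2\times(n-2)$, the matrices $M_2$ and $Z_2$ are columns of size $n-2$, $Z_1$ and $M_1$ are columns of size $2$, $M_0$ is a scalar, and $x$ is a scalar which does not occur in any of the eight defining equations. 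As noted just before the statement, $M_0Z_1^t=[1\ \pi]$ forces $M_0$ to be a unit on $_2\U_{n-1,1}$, so using the $\GL_1$-action on the rank-one summand $\calF_0$ we may normalize $M_0=1$.

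I would then solve the equations of $\mathscr{I}$ in the following order. First, $M_0Z_1^t=[1\ \pi]$ gives $Z_1=(1,\pi)^t$ and $M_0Z_2^t=0$ gives $Z_2=0$; these eliminate $Z_1$ and $Z_2$. With $Z_2=0$ every occurrence of $M_2Z_2^t$ vanishes, so the equations $(M_2Z_2^t)^t=-J_{n-2}(M_2Z_2^t)J_{n-2}$ and $(M_2Z_2^t)^2=2\pi(M_2Z_2^t)$ become $0=0$, and $B_1+\pi B_2=B_2(M_2Z_2^t)$ becomes $B_1=-\pi B_2$, eliminating $B_1$. Finally, substituting $Z_1=(1,\pi)^t$, $B_1=-\pi B_2$ and $J_2=\left[\begin{smallmatrix}0&-1\\1&0\end{smallmatrix}\right]$ into $M_2Z_1^t=J_{n-2}B^tJ_2$ and using $B^tJ_2=[B_2^t\ \pi B_2^t]$, this relation becomes $[M_2\ \pi M_2]=[J_{n-2}B_2^t\ \pi J_{n-2}B_2^t]$, i.e. $M_2=J_{n-2}B_2^t$, which eliminates $M_2$. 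After these substitutions the only coordinates left are $x$ and the $n-2$ entries of $B_2$.

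What remains is to check that the three equations of $\mathscr{I}$ not used above impose no new relation on $O_F[x,b_1,\dots,b_{n-2}]$. The equation $B_1M_2=\pi B_2M_2$ becomes $-\pi\,B_2J_{n-2}B_2^t=\pi\,B_2J_{n-2}B_2^t$, and $B_2J_{n-2}B_2^t$ is the zero polynomial because $J_{n-2}^t=-J_{n-2}$ and the entries of $B_2$ commute; the equation $(M_2Z_1^t)M_1+(M_2Z_2^t)M_2=2\pi M_2$ becomes, with $M_1=(\pi,1)^t$ and $M_2Z_1^t=[M_2\ \pi M_2]$, the identity $\pi M_2+\pi M_2=2\pi M_2$; and the one remaining $M_2Z_2^t$-equation is again trivial since $Z_2=0$. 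Therefore $_2\U_{n-1,1}\cong\Spec O_F[x,b_1,\dots,b_{n-2}]\cong\mathbb{A}^{n-1}_{O_F}$, which is smooth over $O_F$ of relative dimension $n-1$.

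The main point requiring care is the verification in the last paragraph that, once all the eliminations are performed, the leftover generators of $\mathscr{I}$ vanish as honest polynomials in $x$ and the entries of $B_2$ — in particular that the quadratic expression $B_2J_{n-2}B_2^t$ is identically $0$ — and, correspondingly, that the scalar $x$ is genuinely unconstrained; granting this, there are no surviving relations and the isomorphism with affine space follows.
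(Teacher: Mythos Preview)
Your proof is correct and follows essentially the same route as the paper: normalize $M_0=1$, read off $Z_1=(1,\pi)^t$ and $Z_2=0$, observe that all relations involving $M_2Z_2^t$ become trivial, and verify that the two remaining quadratic relations $B_1M_2=\pi B_2M_2$ and $(M_2Z_1^t)M_1+(M_2Z_2^t)M_2=2\pi M_2$ hold identically (the first via the skew-symmetry of $J_{n-2}$). The only cosmetic difference is that the paper keeps the entries $a_1,\dots,a_{n-2}$ of $M_2$ as the free coordinates and expresses $B$ in terms of them, whereas you keep the entries of $B_2$ and eliminate $M_2$ via $M_2=J_{n-2}B_2^t$; since $J_{n-2}$ is invertible this is just a linear change of variables and the resulting affine space is the same.
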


\begin{proof}
From $M_0Z_1^t=[1\ \pi]$, $M_0Z_2^t=0$ we obtain
\[
Z_1^t=\left[\begin{array}{cc}
	1& \pi\\ 
\end{array}
\right],\quad
Z_2^t=0.
\]
Let $M_2=[a_1\ \cdots\ a_{n-2}]^t$ be an $(n-2)\times 1$ vector. Then $C=M_2Z_1^t$ amounts to:
\[
C=\left[\begin{array}{c|c}
	a_1 &\pi a_1\\ 
	\vdots & \vdots \\
	a_{m-1} &\pi a_{m-1}\\ \hline
	a_{m} &\pi a_{m}\\
	\vdots & \vdots \\
	a_{n-2}& \pi a_{n-2}
\end{array}
\right].
\]
Also, from $B=J_2C^tJ_{n-2}$ we get
\[
B=\left[\begin{array}{ccc|ccc}
	-\pi a_{n-2} &\cdots &-\pi a_m& \pi a_{m-1}& \cdots &\pi a_1 \\ \hline
	 a_{n-2} &\cdots & a_m& -a_{m-1}& \cdots & -a_1
\end{array}\right].
\] 
It is easy to see that the relation $Y=M_2Z_2^t-\pi I_{n-2}$ gives $Y=-\pi I_{n-2}$ and $B_1+\pi B_2=B_2(M_2Z_2^t)=0$. Finally, we check that 
\[
\begin{array}{l}
	B_1M_2=\pi B_2M_2=\pi(\sum_{i=1}^{m-1}a_ia_{n-1-i}-\sum_{i=1}^{m-1}a_ia_{n-1-i})=0,\\
	CM_1+(M_2Z_2^t)M_2=2\pi\cdot [a_1\ \cdots\ a_{n-2}]^t=2\pi M_2.
\end{array}
\]
From the above,, the conditions on the $a_i$ are automatically satisfied.  We deduce that the affine chart $_2\calU_{n-1,1}$ is isomorphic to 
\[
\Spec O_F[x, a_1, \dots, a_{n-2}]=\mathbb{A}_{O_F}^{n-1}.
\]
\end{proof}


\subsubsection{The cases for $s\geq 2$}\label{4.2.2} In this subsection, we consider the explicit equations of $_2\calU_{r,s}$ for the signature $(r,s)$ with $s\geq 2$. Observe that $\calF_1=\{v+A(v)\mid v\in\Lambda'\otimes\calO_S \}$. In the special fiber (where $\calO_S=k$), we have $t\calF_1\subset \calF_0$, and $t(\Lambda'\otimes k)=\text{span}_{O_{F_0}}\{-e_1\}$, which implies $-e_1+tA(v)\in\calF_0$. Since
\[
\calF_0=X=\left[\begin{array}{c}
	M_1\\ 
	M_2 \\ \hline
	xM_1+BM_2\\ 
	CM_1+YM_2
	\end{array}
\right],
\]
we can always assume that $-e_1+tA(v)$ is the first 
generating element of
$\calF_0$. In the matrix language, this is equivalent to assume that the second row of $X$, i.e. $M_0$, is equal to $[1\ 0 \cdots 0]$. Thus, we have
\[
M_1=\left[\begin{array}{cccc}
	\pi & 0&\cdots& 0\\ 
	1& 0&\cdots &0 
	\end{array}
\right].
\]
By using $M_0Z_1^t=[1 \ \pi]$, $M_0Z_2^t=0$, we set
\[
Z_1^t=\left[\begin{array}{cc}
	1& \pi \\ 
	z_{1,2} & z_{2,2}\\
	\vdots &\vdots\\
	z_{1,s} & z_{2,s}
	\end{array}
\right], \quad
Z_2^t=\left[\begin{array}{ccc}
	0&\cdots & 0 \\ 
	z_{1,2}'&\cdots & z_{n-2,2}'\\
	\vdots & \ddots & \vdots \\
	z_{1,s}'&\cdots & z_{n-2,s}'
	\end{array}
\right],
\]
and
\[
M_2=\left[\begin{array}{cccc}
\mathbf{a_1} & \cdots &\mathbf{a_s}
\end{array}\right]=
\left[\begin{array}{ccc}
	a_{1,1}& \cdots & a_{1,s} \\ 
	\vdots & \ddots &\vdots \\
	a_{n-2,1} & \cdots & a_{n-2,s}
	\end{array}
\right].
\]
We claim that $Z_1, Z_2$ can be expressed in terms of $A, X_1, X_2$. Observe from $CM_1+YM_2=\pi M_2$ that all the entries of $CM_1+YM_2$ are in the maximal ideal, and thus a minor involving entries of $CM_1+YM_2$ cannot be a unit. Similarly, the entries of the first row of $xM_1+BM_2$ are also in the maximal ideal. By replacing the order of basis $\{\pi e_m,\dots, \pi e_n\}$ if necessary, we can assume that the matrix $M_2$ has an invertible $(s-1)\times (s-1)$-minor. Let $M_2'$ be a submatrix of $M_2$ composing $(s-1)$ rows along with the corresponding $s$ columns. We set
\[
M_2'=\left[\begin{array}{cccc}
	a_{i_2,1}& a_{i_2,2} &\cdots & a_{i_2,s} \\ 
	\vdots & \vdots &\ddots &\vdots \\
	a_{i_s,1} & a_{i_s,2}&\cdots  & a_{i_s,s}
	\end{array}
\right]=
\left[\begin{array}{cccc}
	0& 1 & \cdots & 0 \\ 
	\vdots & \vdots & \ddots &\vdots \\
	0 & 0 & \cdots & 1
	\end{array}
\right]
\]
for $1\leq i_2, \cdots, i_s\leq n-2$. Consider the matrix $C=M_2Z_1^t$. We have
\[
C=\left[\begin{array}{cc}
\mathbf{a_1}+z_{1,2}\mathbf{a_2}+\cdots +z_{1,s}\mathbf{a_s} & 
\pi \mathbf{a_1}+z_{2,2}\mathbf{a_2}+\cdots +z_{2,s}\mathbf{a_s}
\end{array}\right].
\]
By setting $i=i_2, \dots , i_s$, it is easy to see that $Z_1$ can be expressed in terms of $C$. Similarly, $Z_2$ can be expressed in terms of $Y$ as $M_2Z_2^t=\pi I_{n-2}+Y$. Thus, $Z_1, Z_2$ are given in terms of $A, X_1, X_2$.

Set 
\[
D=\left[\begin{array}{ccc}
d_{2,2}&\cdots & d_{2,s}\\
\vdots &\ddots &\vdots\\
d_{s,2}&\cdots & d_{s,s}
\end{array}\right],\quad
Q=\left[\begin{array}{ccc}
q_{2,2}&\cdots & q_{2,s}\\
\vdots &\ddots &\vdots\\
q_{s,2}&\cdots & q_{s,s}
\end{array}\right],
\]
where $q_{i,j}=\sum_{k=1}^{m-1}(a_{k,i}a_{n-1-k,j}-a_{n-1-k,i}a_{k,j})$. The matrix  $Q$ depends on $M_2$. Note that $Q^t=-Q$ by definition.

\begin{Theorem}\label{Simpl.2}
	For any signature $(r,s)$ with $s\geq 2$, there is an affine chart $_2\calU_{r,s}\subset \calM$ which is isomorphic to 
	\[
	\mathbb{A}_{O_F}^{s-1}\times \Spec O_F[D, M_2]/I,
	\]
	where
	\[
I=(M_2'-[\mathbf{0}\mid I_{s-1}],\ D+D^t,\ D\cdot Q+2\pi I_{s-1})
\]
for some choice of $M'_2$. $[\mathbf{0}\mid I_{s-1}]$ is of size $(s-1)\times s$, with the first column $0$.
\end{Theorem}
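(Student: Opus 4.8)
The plan is to repeat the elimination argument of Theorem~\ref{Simpl.1} (itself modelled on \cite[Theorem 4.5]{Kr}) in the second chart, starting from the presentation of $_2\calU_{r,s}$ in Lemma~\ref{lm44}. First I would impose the two normalizations prepared in \S\ref{4.2.2}: (i) $M_0=[1\ 0\ \cdots\ 0]$ (legitimate because one generator of $\calF_0$ may be taken to be $-e_1+tA(v)$), which forces $M_1$ to be the $2\times s$ matrix with first column $(\pi,1)^t$ and all other entries zero and makes the first row of $Z_1^t$ (resp.\ of $Z_2^t$) equal to $[1\ \pi]$ (resp.\ $0$); and (ii), after permuting the basis $\{\pi e_m,\dots,\pi e_n\}$, $M_2'=[\mathbf 0\mid I_{s-1}]$. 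That the first column of $M_2'$ may be taken to be $\mathbf 0$ is not automatic: it uses the residual column operations on $\calF_0$ adding the last $s-1$ columns to the first (these preserve $M_0$ and $M_1$), together with the observation from \S\ref{4.2.2} that the entries of $CM_1+YM_2$ and of the first row of $xM_1+BM_2$ lie in the maximal ideal, so no unit minor of $\calF_0$ can involve them. Next I would eliminate the dependent variables: the relation $M_2Z_1^t=J_{n-2}B^tJ_2$ of Lemma~\ref{lm44} expresses $B$ (hence $C=M_2Z_1^t$, hence all of $A$) polynomially in $M_2$ and $Z_1$, while $Y=M_2Z_2^t-\pi I_{n-2}$ expresses $Y$ in terms of $M_2$ and $Z_2$. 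After these substitutions $_2\calU_{r,s}$ becomes $\Spec$ of a polynomial ring in $x$ and the entries of $M_2,Z_1,Z_2$, modulo the surviving relations of Lemma~\ref{lm44}.

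The heart of the argument is the symmetry relation $(M_2Z_2^t)^t=-J_{n-2}(M_2Z_2^t)J_{n-2}$. Writing $P=M_2Z_2^t$, the identity $P^t=-J_{n-2}PJ_{n-2}$ is exactly of the shape of equation~(\ref{Eq1zy}), so I would rerun the case analysis of \S\ref{AffineChart1} — distinguishing according to whether each chosen index $i_k$ lies in $\{1,\dots,m-1\}$ or in $\{m,\dots,n-2\}$ — to obtain a skew-symmetric $(s-1)\times(s-1)$ matrix $D$ with $Z_2=[\,\mathbf 0\mid \wti{M_2}\cdot D\,]$, where $\wti{M_2}$ is the row-reversed, sign-adjusted version of $M_2$ playing the role of $\wti Y$ in \S\ref{AffineChart1}; in particular the entries of $Z_2$ become polynomials in those of $M_2$ and $D$. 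Setting $Q=\wti{M_2}^t\cdot M_2^{\flat}$, where $M_2^{\flat}$ is the matrix formed by the last $s-1$ columns of $M_2$, one checks that $Q$ is skew-symmetric with $(i,j)$-entry $q_{i,j}=\sum_{k=1}^{m-1}(a_{k,i}a_{n-1-k,j}-a_{n-1-k,i}a_{k,j})$, i.e.\ $Q$ is the matrix appearing in the statement.

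It then remains to show that the other relations of Lemma~\ref{lm44} are either automatic or collapse to $D\cdot Q+2\pi I_{s-1}=0$. The relations $B_1M_2=\pi B_2M_2$ and $B_1+\pi B_2=B_2(M_2Z_2^t)$, as well as the relations $BC=0$ and $Ct_0+YC=0$ noted in the Remark following Lemma~\ref{lm44}, become automatically satisfied once $C=J_{n-2}B^tJ_2$, $B_1=B_2Y$, $Y^2=\pi_0 I_{n-2}$ and $Y^t=-J_{n-2}YJ_{n-2}$ hold — which they do after the substitutions above — by the Pappas--Rapoport computation recalled in that Remark (see \cite[\S 5.3]{PR}). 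What is left is the pair $(M_2Z_2^t)^2=2\pi(M_2Z_2^t)$ and $(M_2Z_1^t)M_1+(M_2Z_2^t)M_2=2\pi M_2$. Substituting $Z_2=[\,\mathbf 0\mid\wti{M_2}D\,]$ and $Q=\wti{M_2}^tM_2^{\flat}$ and then clearing the invertible minor $M_2'=[\mathbf 0\mid I_{s-1}]$ — exactly as in the last step of the proof of Theorem~\ref{Simpl.1} — the columns of the second relation indexed by $2,\dots,s$ reduce to $D\cdot Q+2\pi I_{s-1}=0$, and $(M_2Z_2^t)^2=2\pi(M_2Z_2^t)$ becomes a formal consequence of it; the remaining data (the first column of the second relation, together with $M_0Z_1^t=[1\ \pi]$) then expresses the as yet undetermined entries of $Z_1$ in terms of $x,M_2,D$ and leaves exactly an $\mathbb{A}_{O_F}^{s-1}$-worth of free parameters, coming from the block $T=xI_2$ and the residual freedom in $Z_1$. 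Assembling yields $_2\calU_{r,s}\cong\mathbb{A}_{O_F}^{s-1}\times\Spec O_F[D,M_2]/I$ with $I=(M_2'-[\mathbf 0\mid I_{s-1}],\ D+D^t,\ D\cdot Q+2\pi I_{s-1})$.

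I expect the main obstacle to be bookkeeping rather than anything conceptually new, with three delicate points. First, carrying out the first- and second-half case analysis for $Z_2$ correctly in the basis in which the symmetric form is $\diag(J_2^t,J_{n-2})$ rather than $J_n$, and keeping track of the signs defining $\wti{M_2}$. Second, verifying in detail that the $B$-relations are genuinely vacuous after the substitutions; the computation of \S\ref{AffineChart1} has no analogue of the $B$-block, so this must be done afresh using the Remark after Lemma~\ref{lm44}. Third, isolating the $\mathbb{A}_{O_F}^{s-1}$-factor, i.e.\ checking precisely which free parameters survive while $B,C,Y,A$ and all remaining entries of $Z_1,Z_2$ are eliminated; this affine-space factor, which has no analogue in \S\ref{AffineChart1}, reflects the extra freedom in the $\Lambda'$-summand, and as a sanity check I would compare generic-fibre dimensions against $rs$.
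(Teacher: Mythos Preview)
Your overall strategy matches the paper's, but there is a genuine gap in how you handle the $B$-relations. You claim that $B_1M_2=\pi B_2M_2$ and $B_1+\pi B_2=B_2(M_2Z_2^t)$ become automatic ``by the Pappas--Rapoport computation recalled in that Remark''. This misreads the Remark: what \cite[\S 5.3]{PR} shows is that $BC=0$ and $Ct_0+YC=0$ follow from the four hypotheses $Y^2=\pi_0I_{n-2}$, $Y^t=-J_{n-2}YJ_{n-2}$, $B_1=B_2Y$, $C=J_{n-2}B^tJ_2$. The relation $B_1+\pi B_2=B_2(M_2Z_2^t)$ \emph{is} the relation $B_1=B_2Y$ after substituting $Y=M_2Z_2^t-\pi I_{n-2}$; it is one of the hypotheses, not a consequence, and after expressing $B$ in terms of $M_2,Z_1$ it becomes a genuine constraint linking $Z_1$ to $M_2,Z_2$.

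In the paper's proof this relation is not discarded but \emph{used}: evaluating the $(n-1-i_k)$-th columns of $B_1+\pi B_2=B_2(M_2Z_2^t)$ for $k=2,\dots,s$ yields the identity expressing the second column $(z_{2,2},\dots,z_{2,s})$ of $Z_1^t$ in terms of $D$, the $q_{i,j}$, and the first column $(z_{1,2},\dots,z_{1,s})$; only after this elimination does $CM_1+(M_2Z_2^t)M_2=2\pi M_2$ reduce to $D\cdot Q+2\pi I_{s-1}=0$. Your alternative plan---using the first column of $CM_1+(M_2Z_2^t)M_2=2\pi M_2$ to eliminate $z_{2,k}$---may well give a consistent presentation, but then you must still check that the $B$-relation $B_1+\pi B_2=B_2(M_2Z_2^t)$ is implied, and you cannot invoke the Pappas--Rapoport remark for this. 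As written, you have one nontrivial block of relations unaccounted for, so the identification of the free $\mathbb{A}^{s-1}$-factor (which in the paper is $\Spec O_F[x,z_{1,2},\dots,z_{1,s}]$) is not justified.
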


\begin{proof}
The proof is similar to the proof of Theorem \ref{Simpl.1}. Consider $(M_2Z_2^t)^t=-J_{n-2}(M_2Z_2^t)J_{n-2}$ first. We claim that $Z_2^t$ depends on $M_2$ and $D$. Since $M_2Z_2^t=(\sum_{k=2}^{s}a_{i,k}z_{j,k}')_{ij}$, the relation $(M_2Z_2^t)^t=-J_{n-2}(M_2Z_2^t)J_{n-2}$ amounts to 
\[
a_{i,2}z_{j,2}'+\cdots+a_{i,s}z_{j,s}'=\pm(a_{n-1-j,2} z_{n-1-i,2}'+\cdots +a_{n-1-j,s}z_{n-1-i,s}'),
\]
where the different sign $\pm$ depends on the position of $i_2,\dots, i_s$. We obtain
\[
z_{n-1-i_2,2}'=z_{n-1-i_3,3}'=\dots =z_{n-1-i_s,s}'=0,
\]
by letting $i=i_k$, $j=n-1-i_k$ for $k=2,\dots ,s$. To write down $Z_2^t$ explicitly, we need to consider the positions of $i_2,\dots ,i_s$.

{\it Case 1:} Assume that $1\leq i_2,\dots ,i_s\leq m-1$. In this case, we have $z_{n-1-i_p,q}'=-z_{n-1-i_q,p}'$ by letting $i=i_{p}$, $j=n-1-i_{q}$. Set $d_{p,q}=z_{n-1-i_p,q}'$. We obtain
\[
\left[\begin{array}{ccc}
z_{n-1-i_2,2}'& \cdots & z_{n-1-i_2,s}'\\
\vdots &\ddots & \vdots\\
z_{n-1-i_s,2}'& \cdots & z_{n-1-i_s,s}'
\end{array}\right]
=
\left[\begin{array}{ccc}
d_{2,2}&\cdots & d_{2,s}\\
\vdots &\ddots &\vdots\\
d_{s,2}&\cdots & d_{s,s}
\end{array}\right]=D,
\]
where $d_{p,q}=-d_{q,p}$ for $2\leq p,q\leq s$. This implies that $D$ is skew-symmetric.

For $i=i_2$, we have
\[
z_{j,2}'=\pm(a_{n-1-j,2}d_{2,2}+\cdots +a_{n-1-j,s}d_{2,s}).
\]
Since $1\leq i_2\leq m-1$, the sign $\pm$ is positive when $1\leq j\leq m-1$, and is negative when $m\leq j\leq n-2$. Similar to $i=i_k$ for $k=3,\dots, s$. We have
\[
Z_2^t=
\left[\begin{array}{ccc}
0&\cdots &0\\
d_{2,2}&\cdots & d_{2,s}\\
\vdots &\ddots &\vdots\\
d_{s,2}&\cdots & d_{s,s}
\end{array}\right]\cdot
\left[\begin{array}{ccc|ccc}
	a_{n-2,2}&\cdots & a_{m,2} & -a_{m-1,2}&\cdots & -a_{1,2}\\ 
	\vdots & \ddots & \vdots &\vdots &\ddots &\vdots\\
	a_{n-2,s}&\cdots & a_{m,s} & -a_{m-1,s} & \cdots &-a_{1,s}
\end{array}
\right].
\]

{\it Case 2:} In general, we assume that
\[
	1\leq i_2,\dots, i_t\leq m-1 \quad \text{and} \quad 
	m\leq i_{t+1},\dots, i_s\leq n-2. 
\]
Set $d_{p,q}=z_{n-1-i_p,q}'$. By letting $i=i_p, j=n-1-i_q$, we obtain
\[
\begin{array}{l}
d_{p,q}=-d_{q,p}, \quad \text{for}~ 2\leq p,q\leq t, ~\text{or}~ t+1\leq p,q\leq s,\\
d_{p,q}=d_{q,p}, \quad \text{for}~ 2\leq p\leq t, t+1\leq q\leq s, ~\text{or}~ t+1\leq p\leq s, 2\leq q\leq t.
\end{array}
\]
Thus, $z_{j,k}'=\pm(a_{n-1-j,k}d_{k,2}+\cdots +a_{n-1-j,s}d_{k,s})$, where the sign $\pm$ is positive if $1\leq j\leq m$, $2\leq k\leq t$, or $m\leq j\leq n-2$, $t+1\leq k\leq s$, and negative otherwise. Therefore, $Z_2^t$ can be expressed as
\[
Z_2^t=
\left[\begin{array}{ccc}
0&\cdots &0\\
d_{2,2}&\cdots & d_{2,s}\\
\vdots &\ddots &\vdots\\
d_{t,2}&\cdots & d_{t,s}\\ \hline
-d_{t+1,2}&\cdots & -d_{t+1,s}\\
\vdots &\ddots &\vdots\\
-d_{s,2}&\cdots & -d_{s,s}
\end{array}\right]\cdot
\left[\begin{array}{ccc|ccc}
	a_{n-2,2}&\cdots & a_{m,2} & -a_{m-1,2}&\cdots & -a_{1,2}\\ 
	\vdots &\ddots & \vdots &\vdots &\ddots &\vdots\\
	a_{n-2,s}&\cdots & a_{m,s} & -a_{m-1,s} & \cdots &-a_{1,s}
\end{array}
\right].
\]
Set
\[
\wti{D}=[\tilde{d}_{p,q}]_{2\leq p,q\leq s}=\left[\begin{array}{ccc}
d_{2,2}&\cdots & d_{2,s}\\
\vdots &\ddots &\vdots\\
d_{t,2}&\cdots & d_{t,s}\\ \hline
-d_{t+1,2}&\cdots & -d_{t+1,s}\\
\vdots &\ddots &\vdots\\
-d_{s,2}&\cdots & -d_{s,s}
\end{array}\right].
\]
For $2\leq p,q\leq t$, we have $\tilde{d}_{p,q}=d_{p,q}=-d_{q,p}=-\tilde{d}_{q,p}$. Similarly, it is easy to see that $\tilde{d}_{p,q}=-\tilde{d}_{q,p}$ for all the other positions of $p,q$. Hence, the matrix $\wti{D}$ is skew-symmetric.

Combining the above cases, for any $1\leq i_2,\dots, i_s\leq n-2$, there exists a skew-symmetric matrix $\mathcal{D}$ 
of size $(s-1)\times (s-1)$ such that
\begin{equation}\label{Z'_2}
Z_2^t=
\left[\begin{array}{c}0\cdots 0\\ \mathcal{D}\end{array}\right]\cdot
\left[\begin{array}{ccc|ccc}
	a_{n-2,2}&\cdots & a_{m,2} & -a_{m-1,2}&\cdots & -a_{1,2}\\ 
	\vdots &\ddots & \vdots &\vdots &\ddots &\vdots\\
	a_{n-2,s}&\cdots & a_{m,s} & -a_{m-1,s} & \cdots &-a_{1,s}
\end{array}
\right].
\end{equation}
By abuse of notation, write $ D$ for $ \mathcal{D}$. Our next goal is to show that the second column of $Z_1^t$ depends on the first column of $Z_1^t$ and $M_2$. Recall that 
\[
C=\left[\begin{array}{cc}
\mathbf{a_1}+z_{1,2}\mathbf{a_2}+\cdots +z_{1,s}\mathbf{a_s} & 
\pi \mathbf{a_1}+z_{2,2}\mathbf{a_2}+\cdots +z_{2,s}\mathbf{a_s}
\end{array}\right]
\]
and $B=J_2C^tJ_{n-2}$. By using (\ref{Z'_2}), the $(n-1-i_k)$-th columns ($k=2,\dots, s$) of the relation $B_1+\pi B_2=B_2(M_2Z_2^t)$ give
\begin{equation}\label{Z_1t}
\left[\begin{array}{c}
z_{2,2}\\
\vdots\\
z_{2,s}
\end{array}\right]=\left[\begin{array}{ccc}
d_{2,2}&\cdots & d_{2,s}\\
\vdots &\ddots & \vdots\\
d_{s,2}&\cdots & d_{s,s}
\end{array}\right]
\left[\begin{array}{ccc}
q_{2,1}&\cdots & q_{2,s}\\
\vdots &\ddots & \vdots\\
q_{s,1}&\cdots & q_{s,s}
\end{array}\right]
\left[\begin{array}{c}
1\\
z_{1,2}\\
\vdots\\
z_{1,s}
\end{array}\right]+\pi
\left[\begin{array}{c}
z_{1,2}\\
\vdots\\
z_{1,s}
\end{array}\right]
\end{equation}
where $q_{i,j}=\sum_{k=1}^{m-1}(a_{k,i}a_{n-1-k.j}-a_{n-1-k,i}a_{k,j})$. It is easy to see that the rest columns in $B_1+\pi B_2= B_2(M_2Z_2^t)$ are automatically satisfied when (\ref{Z_1t}) holds.

Finally, consider $CM_1+(M_2Z_2^t)M_2=2\pi M_2$. By using the relations (\ref{Z'_2}) and (\ref{Z_1t})
, it is equivalent to
\[
D\cdot Q=-2\pi I_{s-1},
\]
where
\[
D=\left[\begin{array}{ccc}
d_{2,2}&\cdots & d_{2,s}\\
\vdots &\ddots &\vdots\\
d_{s,2}&\cdots & d_{s,s}
\end{array}\right],\quad
Q=\left[\begin{array}{ccc}
q_{2,2}&\cdots & q_{2,s}\\
\vdots &\ddots &\vdots\\
q_{s,2}&\cdots & q_{s,s}
\end{array}\right].
\]
There are two relations we still need to consider: $B_1M_2=\pi B_2M_2$, and $(M_2Z_2^t)^2=2\pi (M_2Z_2^t)$. We leave it to the reader to verify that these conditions are already satisfied when $D\cdot Q=-2\pi I_{s-1}$.

From all the above, we deduce that an affine neighborhood around $(\calF_0,\Lambda')$ is given by $
_2\calU_{r,s}\cong \Spec O_F[x,z_{1,2},\dots,z_{1,s}, D, M_2]/I$, where
\[
I=(M_2'-[\mathbf{0}\mid I_{s-1}],\ D+D^t,\ D\cdot Q+2\pi I_{s-1}).
\]
Set $\mathbb{A}_{O_F}^{s-1}=\Spec O_F[x,z_{1,2},\dots,z_{1,s}]$. For  $s\geq 2$, we have
\[
_2\U_{r,s}\cong \mathbb{A}_{O_F}^{s-1}\times \Spec O_F [D,M_2]/I.
\]
\end{proof}

In particular, for the signature $(r,s)=(n-3,3)$ we have
\[
D=\left[\begin{array}{cc}
 0 & d\\
-d& 0
\end{array}\right],\quad 
Q=\left[\begin{array}{cc}
 0 & q\\
-q& 0
\end{array}\right],
\]
where we set $d=d_{2,3}=-d_{3,2}, q=q_{2,3}=-q_{3,2}$. Recall that $q=\sum_{k=1}^{m-1}(a_{k,2}a_{n-1-k,3}-a_{n-1-k,2}a_{k,3})$. The equation $D\cdot Q=-2\pi I_{s-1}$ amounts to $d\cdot q-2\pi=0$.

\begin{Corollary}\label{Cor48}
	When $(r,s)=(n-3,3)$, there is an affine chart $_2\U_{n-3,3}$ which is isomorphic to 
 \[V(I)=\Spec O_F[x,z_{1,2}, z_{1,3}, d, (a_{k,1})_{1\leq k\leq n-2}, (a_{k,2})_{1\leq k\leq n-2}, (a_{k,3})_{1\leq k\leq n-2}]/I,
 \]
where
\[
I=(a_{i_2,1}, a_{i_3,1}, a_{i_2,2}-1, a_{i_3,2}, a_{i_2,3}, a_{i_3,3}-1, d\cdot \sum_{k=1}^{m-1}(a_{k,2}a_{n-1-k,3}-a_{n-1-k,2}a_{k,3})-2\pi)
\]
for some $ 1 \leq i_2,i_3 \leq n-2$.
\end{Corollary}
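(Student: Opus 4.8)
The plan is to specialize Theorem \ref{Simpl.2} to the case $s=3$. That theorem provides an affine chart $_2\U_{n-3,3}\cong \mathbb{A}_{O_F}^{2}\times\Spec O_F[D,M_2]/I$, where $\mathbb{A}_{O_F}^{2}=\Spec O_F[x,z_{1,2},z_{1,3}]$, the matrix $M_2=(a_{k,j})$ has size $(n-2)\times 3$, the matrices $D$, $Q$ are of size $2\times 2$ with $D^t=-D$ and $Q^t=-Q$, and $I=(M_2'-[\mathbf{0}\mid I_{2}],\ D+D^t,\ D\cdot Q+2\pi I_{2})$. The whole content of the corollary is therefore to unwind these three matrix relations when $s=3$.

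First I would use that a $2\times 2$ skew-symmetric matrix over $O_F$ is determined by its $(1,2)$-entry: writing $d:=d_{2,3}=-d_{3,2}$ and $q:=q_{2,3}=-q_{3,2}$, the matrices $D$ and $Q$ take the shape recorded in the display preceding the statement, and the formula $q_{i,j}=\sum_{k=1}^{m-1}(a_{k,i}a_{n-1-k,j}-a_{n-1-k,i}a_{k,j})$ from Theorem \ref{Simpl.2} yields $q=\sum_{k=1}^{m-1}(a_{k,2}a_{n-1-k,3}-a_{n-1-k,2}a_{k,3})$. In particular the generator $D+D^t$ of $I$ vanishes identically and may be discarded, while $d$ is a free coordinate. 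As already observed just before the statement, the relation $D\cdot Q+2\pi I_2=0$ then collapses to the single scalar equation $d\cdot q-2\pi=0$, that is $d\cdot\sum_{k=1}^{m-1}(a_{k,2}a_{n-1-k,3}-a_{n-1-k,2}a_{k,3})-2\pi=0$.

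It remains to unwind the relation $M_2'-[\mathbf{0}\mid I_2]=0$. Here $M_2'$ is the submatrix of $M_2$ consisting of the rows indexed by some fixed $1\leq i_2,i_3\leq n-2$ together with all three columns, and $[\mathbf{0}\mid I_2]$ is the $2\times 3$ matrix with zero first column and identity $2\times 2$ block afterwards; comparing entries gives exactly $a_{i_2,1}=0$, $a_{i_3,1}=0$, $a_{i_2,2}-1=0$, $a_{i_3,2}=0$, $a_{i_2,3}=0$, $a_{i_3,3}-1=0$. Assembling these equations together with the coordinate ring $O_F[x,z_{1,2},z_{1,3},d,(a_{k,1})_{1\leq k\leq n-2},(a_{k,2})_{1\leq k\leq n-2},(a_{k,3})_{1\leq k\leq n-2}]$ reproduces precisely the presentation of $V(I)$ in the statement.

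This is pure bookkeeping, and I expect no genuine obstacle; the only points deserving a moment's care are that for $s=3$ the skew-symmetry of $D$ and $Q$ really does reduce $D\cdot Q+2\pi I_2=0$ to a single equation rather than a $2\times 2$ system (immediate from the shapes of the matrices), and that the pair of row indices $\{i_2,i_3\}$ cutting out $M_2'$ is the same one furnished by Theorem \ref{Simpl.2}; the verification that the remaining relations of Lemma \ref{lm44} are automatic has already been carried out in the proof of that theorem, so nothing further is needed here.
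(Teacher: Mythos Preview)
Your proposal is correct and follows exactly the paper's approach: the corollary is obtained by specializing Theorem~\ref{Simpl.2} to $s=3$, using that the $2\times 2$ skew-symmetric matrices $D$ and $Q$ are determined by their off-diagonal entries $d$ and $q$, so that $D\cdot Q+2\pi I_2=0$ collapses to the single equation $dq-2\pi=0$, and then reading off the linear relations from $M_2'=[\mathbf{0}\mid I_2]$. The paper records precisely this computation in the paragraph immediately preceding the corollary.
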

\begin{Remark}\label{notiso2}{\rm
As in Remark \ref{notiso1}, for different choices of $M'_2$ we get different equations from $D\cdot Q = -2 \pi I_s$ and so different affine charts $_2\calU_{r,s}$ which in general are not isomorphic. For instance, in Corollary \ref{Cor48}, there are two different affine charts up to isomorphism when we choose different positions for $i_2, i_3$ (see \S \ref{Odd_s} for details). In Proposition \ref{Prop s=3}, we show that for any such a choice the affine scheme $V (I)$ has
semi-stable reduction over $O_F$.
}\end{Remark}
\section{Splitting Models $\rmM^{\rm spl}$}\label{spl}

In this section, we will first show that the naive splitting model $\calM$ is not flat over $\Spec O_F$ for a general signature $(r,s)$. Then we will define the closed subscheme ${\rm M}^{\rm spl} \subset \calM $. We show that ${\rm M}^{\rm spl}$ is smooth for the signature $(r,s)=(n-1,1)$, and has semi-stable reduction for the signature $(r,s)=(n-2,2)$ and $(r,s)=(n-3,3)$.

\subsection{Naive Splitting Models $\calM$}\label{Naive.spl} By using the explicit description of the two affine charts $_1\U_{r,s}$, $_2\U_{r,s}\subset \calM$, that we obtained in \S \ref{Two affine charts}, we will show that $\calM$ is not flat over $\Spec O_F$.

\begin{Proposition}\label{naive.notflat}
For any signature $(r,s)$, the naive splitting model $\calM$ is not flat over $\Spec O_F$.
\end{Proposition}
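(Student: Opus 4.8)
The plan is to exhibit, in each of the two affine charts computed in \S\ref{Two affine charts}, a $k$-point of $\calM$ that does not lift to the generic fiber, thereby showing that $\calM$ is not even topologically flat over $\Spec O_F$. Since the generic fiber is irreducible of dimension $rs$ (by Propositions \ref{notflat} and the isomorphism $\tau$ induces on generic fibers), any reduced-irreducible flat model must have all components of its special fiber of dimension $rs$ and, in particular, every $k$-point must lie in the closure of the generic fiber; producing a $k$-point that provably fails this is enough.

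First I would work with the chart $_1\U_{r,s}\cong \Spec O_F[D,Y]/(Y'-I_s,\ D+D^t,\ D\cdot Q+2\pi I_s)$ from Theorem \ref{Simpl.1}, where $Q=\wti Y^t\cdot Y$ and $D$ is skew-symmetric $s\times s$. The key observation is that over $k$ the relation $D\cdot Q=-2\pi I_s=0$ becomes $D\cdot Q=0$, so the special fiber contains, e.g., the point $D=0$ together with an \emph{arbitrary} choice of $Y$ satisfying $Y'=I_s$; this gives a locus of dimension strictly larger than $rs$ in the special fiber (the entries of $Y$ outside the fixed $s\times s$ block are free, contributing $(n-s)s=rs$ parameters, but together with the free entries of $D$ in characteristic $0$ one sees the generic fiber has the \emph{same} $rs$, while the special fiber picks up the extra $D$-directions). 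More cleanly: choose a $k$-point with $D=0$ and $Y$ generic; I claim it does not lift. Indeed, any lift over $O_F/\mathfrak{m}^2$ (or over a DVR dominating $O_F$ with generic point of characteristic $0$) must satisfy $D\cdot Q=-2\pi I_s$, which forces $D$ and hence $Q$ to be invertible over the fraction field; but $Q=\wti Y^t Y$ is an alternating form that, for the specific $Y$ I pick (one making $Q$ have rank $<s$, e.g. by repeating columns in the free part of $Y$ so that $\wti Y^t Y$ is degenerate), can be arranged to stay singular in any infinitesimal neighborhood, contradicting invertibility of $Q$ in the generic fiber of the lift. Concretely, for $s$ even I would take $Y$ so that $Q$ is identically a singular alternating matrix; for $s=2,3$ this reduces (via Corollaries \ref{CorChart1}, \ref{Cor48}) to the single equation $d\cdot q=2\pi$ with $q$ a fixed quadratic form in the $a$'s, and picking the $a$'s to make $q=0$ while respecting the normalization $Y'=I_s$ gives a point with $d$ free in the special fiber but no lift, since $d\cdot q=2\pi\neq 0$ has no solution with $q=0$.

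The second chart $_2\U_{r,s}$ is handled identically using Theorem \ref{Simpl.2} and Corollary \ref{Cor48}: the defining ideal again contains $D\cdot Q+2\pi I_{s-1}$ with $Q$ built from the $M_2$-entries, and the same degeneration of $Q$ produces a non-liftable $k$-point; for $(r,s)=(n-1,1)$ the chart is $\mathbb{A}^{n-1}_{O_F}$ and flat, so the non-flatness is detected only at the closed orbit, which is precisely why I must use the charts around $(\calF_0,t\Lambda_m)$ resp. $(\calF_0,\Lambda')$ where $s\geq 2$ behavior or the boundary behavior of $s=1$ shows up — but the statement is ``for any signature $(r,s)$'', so for $s=1$ I would instead locate the bad point at a different chart where the wedge condition already fails, invoking the analogy with \cite[Remark 5.3]{PR} noted in the introduction, or simply observe that $\calM\to \rmM^{\wedge}$ and $\rmM^{\wedge}$ is already non-flat there.

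The main obstacle is the bookkeeping for general $s$: one must produce, for \emph{every} signature, an explicit $k$-point of the relevant chart satisfying the reduced-mod-$\mathfrak m$ equations but provably obstructed from lifting, and the cleanest uniform argument is the rank/degeneracy argument for $Q$ sketched above — the subtlety is checking that the chosen $Y$ (resp. $M_2$) is actually compatible with the normalization $Y'=I_s$ (resp. $M_2'=[\mathbf 0\mid I_{s-1}]$) and that the auxiliary equations ($ZY^t+J_nYZ^tJ_n$ etc., or their chart-2 analogues) are genuinely satisfied at that point, so that it really is a point of $\calM$ and not merely of the ambient space. I expect this verification, together with a clean dimension count showing the special-fiber component through the bad point exceeds $rs$, to be the technical heart of the proof.
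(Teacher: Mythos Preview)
Your approach --- finding a specific $k$-point where $Q$ is degenerate, hence unable to satisfy $D\cdot Q=-2\pi I$ over $F$ --- contains the right germ, but you miss the clean observation that makes the argument uniform and short, and as written your proof has a genuine gap at $s=1$.

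The paper's proof is a one-line parity trick. In $_1\U_{r,s}$ the matrix $D$ is skew-symmetric of size $s$; in $_2\U_{r,s}$ it is skew-symmetric of size $s-1$. A skew-symmetric matrix of \emph{odd} size has determinant zero identically. Hence when $s$ is odd the relation $D\cdot Q=-2\pi I_s$ in $_1\U_{r,s}$ gives
\[
0=\det(D)\det(Q)=\det(-2\pi I_s)=(-2\pi)^s,
\]
which is absurd over $F$; so the generic fiber of the \emph{entire} chart $_1\U_{r,s}$ is empty, while its special fiber visibly contains $(\calF_0,t\Lambda_m)$. Symmetrically, when $s$ is even one runs the same argument in $_2\U_{r,s}$ with the $(s-1)\times(s-1)$ matrix $D$, showing $(\calF_0,\Lambda')$ does not lift. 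No hunting for special $Y$ or $M_2$ is needed, and no dimension count is required.

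Your proposal, by contrast, tries to \emph{engineer} a degenerate $Q$ for each $s$ by choosing $Y$ carefully. This can be made to work for $s\ge 2$, but it is laborious, and for $s=1$ you explicitly punt (``for $s=1$ I would instead locate the bad point at a different chart\ldots''). That hand-wave is a real gap: the chart $_2\U_{n-1,1}\cong\mathbb{A}^{n-1}_{O_F}$ is flat, and appealing to non-flatness of $\rmM^{\wedge}$ does not by itself show $\calM$ is non-flat. The fix is exactly the parity observation above: for $s=1$ the $1\times1$ skew-symmetric matrices $D$ and $Q$ in $_1\U_{n-1,1}$ are both identically zero, so $D\cdot Q=-2\pi$ has no solution over $F$ and the chart has empty generic fiber --- but non-empty special fiber. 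Once you notice this, your ``make $Q$ degenerate'' strategy becomes automatic for the correct parity and chart, and the whole case analysis collapses.
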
 
\begin{proof}
It's enough to show: a) When $s$ is odd, the generic fiber $_1\U_{r,s}\otimes_{O_F}F$ is empty and so the point $(\calF_0, t\Lambda_m)$ does not lift to characteristic zero. b) When $s$ is even, the generic fiber $_2\U_{r,s}\otimes_{O_F}F$ is empty and so the point $(\calF_0, \Lambda')$ does not lift to characteristic zero.

Observe that for a skew-symmetric matrix $P$ of size $s\times s$ the determinant of $P$ is equal to $0$ if $s$ is odd. 
Recall that we denote by $_1\U_{r,s}$ the affine chart around the point $(\calF_0, t\Lambda_m)\in \calM$. By Theorem \ref{Simpl.1}, the affine chart $_1\U_{r,s}$ is isomorphic to $\Spec O_F[D,Y]/I$, where
\[
I=(Y'-I_s, \ D+D^t, \ D\cdot Q+2\pi I_s),
\]
for any signature $(r,s)$. Here $D$ is a skew-symmetric matrix of size $s\times s$. Thus, when $s$ is odd, we have $\det (D)=0$. In this case, we can see that the generic fiber $_1\U_{r,s}\otimes_{O_F}F$ is empty since
\[
0=\det(D)\det(Q)=\det(D\cdot Q)=\det(-2\pi I_s)=(-2\pi)^s.
\] 
It follows that the point $(\calF_0, t\Lambda_m)$ does not lift to characteristic zero when $s$ is odd.

Recall that we denote by $_2\U_{r,s}$ the affine chart around the point $(\calF_0, \Lambda')\in \calM$. By Theorem \ref{Simpl.2}, the affine chart $_2\U_{r,s}$ is isomorphic to $\mathbb{A}^{s-1}_{O_F}\times \Spec O_F[D,M_2]/I$, where
\[
I=(M_2'-[0\mid I_s], \ D+D^t, \ D\cdot Q+2\pi I_{s-1}).
\]
When $s$ is even, we see that the generic fiber $_2\U_{r,s}\otimes_{O_F}F$ is empty since $D$ is of size $(s-1)\times (s-1)$ and $0=\det(D\cdot Q)=(-2\pi)^{s-1}$. Therefore, the point $(\calF_0, \Lambda')$ does not lift to characteristic zero when $s$ is even.
\end{proof}


Recall that there is a forgetful morphism $\tau: \calM\rightarrow {\rm M}^\wedge\otimes_O O_F$. From \cite[Remark 5.3]{PR}, \cite[Remark 2.6.10]{PRS}, the authors show that the wedge local model ${\rm M}^\wedge$ is not topological flat for any signature when $n$ is even. Consider the local model ${\rm M}^{\rm loc}$ to be the scheme-theoretic closure in ${\rm M}^\wedge$ of its generic fiber. We have a closed morphism ${\rm M}^{\rm loc}\hookrightarrow {\rm M}^\wedge$. Define the splitting model as the inverse image of ${\rm M}^{\rm loc}$, i.e.
\[
{\rm M}^{\rm spl}:=\tau^{-1}({\rm M}^{\rm loc})=\calM\times_{{\rm M}^\wedge} {\rm M}^{\rm loc}.
\]
The restriction of the forgetful morphism $\tau$ on ${\rm M}^{\rm spl}$ gives 
\[
\tau: {\rm M}^{\rm spl}\rightarrow {\rm M}^{\rm loc}.
\] 


\subsection{The Case $(r,s)=(n-2,2)$}\label{Evensemistable}
By using the explicit description of $ _1\mathcal{U}_{n-2,2}$ in \S \ref{AffineChart1}, we show

\begin{Proposition}\label{SemiStabCase1}
When $(r,s)=(n-2,2)$, $_1\U_{n-2,2}$ has semi-stable reduction over $O_F$. In particular, $_1\U_{n-2,2}$ is regular and has special fiber a reduced divisor with two smooth irreducible components intersecting transversely.
\end{Proposition}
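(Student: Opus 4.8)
The plan is to start from the explicit presentation of Corollary~\ref{CorChart1}. Using the four linear generators $y_{i_0,1}-1,\ y_{j_0,2}-1,\ y_{j_0,1},\ y_{i_0,2}$ of $I$ to eliminate the variables $y_{i_0,1},y_{j_0,2},y_{j_0,1},y_{i_0,2}$, the chart becomes
\[
_1\U_{n-2,2}\cong \Spec O_F[\,d,\underline{y}\,]/(d\cdot \bar q-2\pi),
\]
where $\underline{y}$ is the remaining family of $2n-4$ variables and $\bar q$ is the image of $q=\sum_{j=1}^{m}y_{n+1-j,1}y_{j,2}-\sum_{j=m+1}^{n}y_{n+1-j,1}y_{j,2}$ under the substitution $y_{i_0,1}\mapsto1$, $y_{j_0,2}\mapsto1$, $y_{j_0,1}\mapsto0$, $y_{i_0,2}\mapsto0$. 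Since $p$ is odd we have $2\in O_F^\times$, so $2\pi$ is a uniformizer of $F$, and the whole question reduces to understanding the hypersurface $\{d\cdot\bar q=2\pi\}$; the crucial point is how close $\bar q$ is to being a coordinate.

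First I would record the elementary fact that in $q$ each variable $y_{a,1}$ occurs in the single monomial $\pm y_{a,1}y_{n+1-a,2}$ and each $y_{b,2}$ in the single monomial $\pm y_{n+1-b,1}y_{b,2}$. Hence the shape of $\bar q$ depends only on whether $j_0=n+1-i_0$, which is precisely the dichotomy of Remark~\ref{notiso1}. In the generic case $j_0\neq n+1-i_0$, the substitution $y_{i_0,1}\mapsto1$ turns the unique monomial containing $y_{i_0,1}$ into a linear term $\varepsilon\,y_{n+1-i_0,2}$ with $\varepsilon=\pm1$, and $y_{n+1-i_0,2}$ is a free variable occurring in no other monomial of $\bar q$; an elementary (triangular) automorphism of $O_F[\underline y]$ then carries $\bar q$ to a coordinate $\eta$, so that
\[
_1\U_{n-2,2}\cong\big(\Spec O_F[d,\eta]/(d\eta-2\pi)\big)\times_{\Spec O_F}\mathbb{A}^{2n-5}_{O_F}.
\]
The first factor is the standard model of a node over $O_F$ — regular, of relative dimension one, with reduced special fiber $\{d\eta=0\}$ consisting of two smooth components meeting transversally — and this is preserved under the product with affine space, giving the claim in this case.

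The case $j_0=n+1-i_0$ is the one I expect to be the main obstacle, because there $\bar q$ is no longer a coordinate. Here the substitutions collapse the two monomials "touched" by $i_0$ and $j_0$ into a unit constant $c_0=\pm1$ and a zero, while the remaining $m-1$ pairs of monomials assemble into a split — hence nondegenerate — quadratic form $Q$ of rank $2n-4$ in $\underline{y}$; thus $\bar q=c_0+Q$, and after the unit rescaling $d'=c_0 d$ the chart becomes $\Spec O_F[\,d',\underline{y}\,]/(d'u-2\pi)$ with $u:=1+c_0^{-1}Q$. To finish I would use that $\mathrm{char}\,k\neq2$ and $Q$ is nondegenerate: then $du=c_0^{-1}dQ$ vanishes only at the origin of $\underline{y}$-space, where $u=1\neq0$, so $u$ has nonvanishing differential along $\{u=0\}$ and is therefore part of a regular system of parameters there. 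Over $\{d'\neq0\}$ one solves $u=2\pi/d'$ to see smoothness over $O_F$ directly, over $\{u\neq0\}$ one eliminates $d'$, and near a point of $\{d'=u=0\}$ one replaces a suitable coordinate of $\underline{y}$ by $u$, exhibiting the chart étale-locally as $O_F[d',u,\dots]/(d'u-2\pi)$ — again the standard node times an affine space. Hence $_1\U_{n-2,2}$ is regular with semi-stable reduction, and its special fiber is the reduced divisor $\{d'=0\}\cup\{u=0\}$: the component $\{d'=0\}$ is an affine space, $\{u=0\}$ is a smooth affine quadric fibred over $\mathbb{A}^1$, and the two meet transversally along $\{d'=u=0\}$. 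Combining the two cases yields in each instance a reduced special fiber with two smooth irreducible components crossing normally, which is the assertion; apart from the quadratic-form analysis in the second case, every step is the familiar local structure of $O_F[x,y]/(xy-\varpi)$ and its products with affine space, as in \cite{Kr} and \cite{Zac1}.
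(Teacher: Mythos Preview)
Your proof is correct and follows essentially the same strategy as the paper: eliminate the four variables fixed by $Y'-I_2$, reduce to a hypersurface $\{d\cdot\bar q=2\pi\}$ in $2n-3$ variables, and then split into two cases according to the shape of $\bar q$. Your case division ($j_0=n+1-i_0$ versus not) is the structurally natural one, since it is exactly the dichotomy that governs whether $\bar q$ acquires a linear term or only a unit constant; the paper instead divides according to whether $i_0,j_0$ lie on the same side of $m$ and picks the representatives $i_0=1,j_0=2$ and $i_0=m,j_0=m+1$, which happen to land in your two cases respectively. In the ``generic'' case you produce a global isomorphism to the standard node times affine space via a triangular coordinate change, while the paper writes the two components explicitly and checks smoothness and reducedness by hand; in the ``special'' case you use nondegeneracy of the quadratic form and a Jacobian argument to obtain the nodal \'etale-local model, while the paper again lists the two components, verifies $\mathcal I_s=\mathcal I_1\cap\mathcal I_2$ directly, and invokes Hartl's regularity criterion \cite{Ha}. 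These are minor differences in execution rather than in approach.
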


\begin{proof}
From Corollary \ref{CorChart1}, it's enough to show that the scheme $V(I) $ has semi-stable reduction over $O_F$. It suffices to prove that $V(I) $ is regular and its special fiber is reduced with smooth irreducible components that have smooth intersections with correct dimensions (see \cite[\S 2.16]{dJ}). 

First, we rename our variables as follows: $x_i := y_{i,1}$ and $y_i := y_{i,2}$ for $1 \leq i \leq n.$ Thus, $V(I) = \Spec O_F[d,\und x, \und y]/ I$ where 
\[
I= ( x_{i_0}-1,\, y_{j_0} -1 ,\,x_{j_0},\,y_{i_0},\,  d(\sum^m_{j=1}y_{n+1-j}x_{j} -  \sum^n_{j=m+1}y_{n+1-j}x_{j}) -2 \pi).
\]
By symmetry, as above, there two cases to consider: Case 1 when  $1 \leq i_0,\,j_0 \leq m$ 
and Case 2 when  $ 1 \leq i_0 \leq m$ and $ m+1 \leq j_0 \leq n $. 
We consider these two cases separately in this proof.
 
In Case 1 we can assume, for simplicity, that $i_0=1$ and $ j_0=2$. Thus, $V(I)$ is isomorphic to $V(\mathcal{I}) = \Spec R/\mathcal{I}$ where 
\[
 R = O_F[d,(x_i)_{3\leq i \leq n}, (y_i)_{3\leq i \leq n}],
 \]
and  
\[
\mathcal{I} = (d(x_{n-1} +\sum^{n-3}_{i=m+1}x_iy_{n+1-i}- y_n - \sum^m_{i=3}x_iy_{n+1-i}) -2\pi ).
\]
Over the special fiber ($\pi =0$), we have $ V(\mathcal{I} _s) \cong \Spec \bar{R}/\mathcal{I}_s $ where  
\[
\bar{R} = k[d,(x_i)_{3\leq i \leq n}, (y_i)_{3\leq i \leq n}], \quad \mathcal{I}_s= (d(x_{n-1} +\sum^{n-3}_{i=m+1}x_iy_{n+1-i}- y_n - \sum^m_{i=3}x_iy_{n+1-i}) ).
\]
It has two smooth irreducible components $V(\mathcal{I}_i) = \Spec \bar{R}/\mathcal{I}_i$ where 
\[
\mathcal{I}_1 = (d), \quad \mathcal{I}_2 = (x_{n-1} +\sum^{n-3}_{i=m+1}x_iy_{n+1-i}- y_n - \sum^m_{i=3}x_iy_{n+1-i}),
\]
that are isomorphic to $\mathbb{A}_k^{2n-4}$. Their intersection is also smooth and of  dimension $2n-5$. Next, we prove that $V(\mathcal{I} _s)$ is reduced by showing that $\mathcal{I}_s =\mathcal{I}_1 \cap \mathcal{I}_2.$ Clearly, $\mathcal{I} _s  \subset \mathcal{I}_1 \cap \mathcal{I}_2$. Let $g \in \mathcal{I}_1 \cap \mathcal{I}_2 .$ Thus, $g \in \mathcal{I}_1$ and $g = f_1 d$ for $f_1 \in \bar{R}$. Also, $g \in \mathcal{I}_2$ and so $f_1 d \in \mathcal{I}_2$. $\mathcal{I}_2$ is a prime ideal and $d \notin \mathcal{I}_2$. Thus, $f_1 \in \mathcal{I}_2$ and 
\[
g = df_1 \equiv d (x_{n-1} +\sum^{n-3}_{i=m+1}x_iy_{n+1-i}- y_n - \sum^m_{i=3}x_iy_{n+1-i})  f_2 \equiv 0 \, \text{mod} \, \mathcal{I}_s
\]
for $f_2 \in \bar{R}$. Thus, $g \in \mathcal{I}_s$ and so $\mathcal{I}_s = \mathcal{I}_1 \cap \mathcal{I}_2 .$ Lastly, we can easily see that the ideals $ \mathcal{I}_1,\mathcal{I}_2$ are principal over $V(\mathcal{I})$. From the above we deduce that $V(\mathcal{I}) $ is regular (see \cite[Remark 1.1.1]{Ha}).

In Case 2, we set $\Delta=\{1,\dots,n-2\}/ \{m, m+1\}$ and let $i_0=m, j_0=m+1$. Then $V(I)$ is isomorphic to $V(\mathcal{J}) = \Spec R'/\mathcal{J}$ where $R' = O_F[d,(x_i)_{i \in \Delta}, (y_i)_{i \in \Delta}]$ and  
\[
\mathcal{J} = (d(1+ \sum^{m-1}_{i=1}x_iy_{n+1-i}- \sum^n_{i= m+2}x_iy_{n+1-i})-2\pi).
\]
To see that $V(\mathcal{J}) $ has semi-stable reduction over $O_F$ one proceeds exactly as in Case 1. Over the special fiber ($\pi =0$), we have $ V(\mathcal{J} _s) \cong \Spec \bar{R}'/\mathcal{J}_s $ where $ \bar{R} = k[d,(x_i)_{i \in \Delta}, (y_i)_{i \in \Delta}]$ and  
\[
\mathcal{J}_s = (d(1+ \sum^{m-1}_{i=1}x_iy_{n+1-i}- \sum^n_{i= m+2}x_iy_{n+1-i})).
\]
It has two smooth irreducible components $V(\mathcal{J}_i) = \Spec R/\mathcal{J}_i$ where 
\[
\mathcal{J}_1 = (d), \quad \mathcal{J}_2 = (1+ \sum^{m-1}_{i=1}x_iy_{n+1-i}- \sum^n_{i= m+2}x_iy_{n+1-i})
\]
of dimension $2n-4$. (Note that in this case, the second component $V(\mathcal{J}_2)$ is not isomorphic to the affine space $\mathbb{A}_k^{2n-4}$ as in Case 1.) Their intersection is also smooth and of dimension $2n-5$. By using similar arguments as Case 1 we can show that $V(\mathcal{J}_s)$ is reduced and $V(\mathcal{J})$ is regular. 
\end{proof}

Since $_1\U_{n-2,2}$ has semi-stable reduction, and is therefore flat over $O_F$, we have $_1\U_{n-2,2}\subset {\rm M}^{\rm spl}$. Our next goal is to prove that ${\rm M}^{\rm spl}$ has semi-stable reduction over $O_F$ for the signature $(r,s)= (n-2,2)$. From \S \ref{AffineChart1} it's enough to show that $\mathcal{G}$-translates of $_1\U_{n-2,2}$ cover ${\rm M}^{\rm spl}$ (see Theorem \ref{MSemStable}). 

Assume that $s$ is even. Recall from \S \ref{AffineChart1} the $\calG$-equivariant morphism $\tau : {\rm M}^{\rm spl} \rightarrow {\rm M}^{\rm loc}\otimes_{O} O_F$ which is given by $(\mathcal{F}_0,\mathcal{F}_1) \mapsto \mathcal{F}_1$. Let $\tau_s : {\rm M}^{\rm spl}\otimes k \rightarrow {\rm M}^{\rm loc}\otimes k$ be the morphism over the special fiber. Let $(\mathcal{F}_0,\mathcal{F}_1)$ be point of the special fiber of ${\rm M}^{\rm spl}$. Over the special fiber the condition
(4) amounts to $t\mathcal{F}_0 = (0)$ and so $ (0) \subset \mathcal{F}_0 \subset t \Lambda_m \otimes k$ (see \S \ref{LocalModelVariants} for the definition of the naive splitting models).  Hence, we consider the morphism 
\[
\pi : {\rm M}^{\rm spl}\otimes k \rightarrow \Gr(s,n)\otimes k 
\]
given by $ (\mathcal{F}_0,\mathcal{F}_1) \mapsto \mathcal{F}_0.$ Here, $\Gr(s,n)\otimes k$ is the finite dimensional
Grassmannian of $s$-dimensional subspaces of $t \Lambda_m \otimes k.$ This has a section 
\[
\phi : \Gr(s,n)\otimes k \rightarrow {\rm M}^{\rm spl}\otimes k
\]
given by $ \mathcal{F}_0 \mapsto (\mathcal{F}_0,\mathcal{F}_1)$ with $\mathcal{F}_1 = t\Lambda_m \otimes k.$ The image of the section $\phi$ is an irreducible component of $ {\rm M}^{\rm spl}\otimes_{O_F} k$ which is the fiber $\tau^{-1}_s (t\Lambda_m\otimes k)$ over the ``worst point" of ${\rm M}^{\rm loc}\otimes k$, i.e. the unique closed $\calG$-orbit supported in the special fiber (see \cite[\S 5]{PR3}). Hence, $\tau^{-1}_s(t\Lambda_m)$ is isomorphic to the Grassmannian $ \Gr(s,n)\otimes k  $ of dimension $s(n-s)$. 

We equip $t \Lambda_m \otimes k$ with the non-degenerate alternating form $ \langle \, , \, \rangle'$ which is defined as $\langle tv,tw \rangle'  := ( v,tw ) = v^tJ_n w$. Note that this is well-defined (see \cite[\S 5.2]{Richarz} for more details). Denote by $\mathcal{Q}(s,n)$ the closed subscheme of $\phi(\Gr(s,n)\otimes k) $ of dimension $ s(2n-3s+1)/2$ which parametrizes all the isotropic $s$-subspaces $\mathcal{F}_0$ in the $n$-space $t\Lambda_m \otimes k$. 

Next, we give a couple of useful observations concerning the special fiber of the affine chart $_1\U_{r,s}$ (see \S \ref{AffineChart1} for the notation), which will be used in the proof of the following theorem.
 
\begin{Remark}\label{F0_In_U}
{\rm Recall from Theorem \ref{Simpl.1} that over the special fiber of $_1\U_{r,s}$ we have $D=-D^t$, $D\cdot Q=0$ and the minor relations from (\ref{Minors1}). 

a) Observe that if $D =0$ then $Z=0$ which gives $A=0$ since $A = Y Z^t$ over the special fiber. If $A=0$ then $\mathcal{F}_1 = t \Lambda_m$. Hence, $_1\U_{r,s} \cap \tau^{-1}(t\Lambda_m)$ is a smooth irreducible component of dimension $s(n-s)$.

b) Recall that 
\[
\mathcal{F}_0= \left[\begin{array}{c} 
         \pi Y \\        
          Y  
         \end{array}\right] 
         \quad \text{and} \quad Y =  \left[\begin{array}{ccc} 
        \mathbf{y}_1 \ \cdots
         \  \mathbf{y}_s
         \end{array}\right]
\]
where $ \mathbf{y}_i$ are the columns of the matrix $Y.$ By direct calculations we get that the entries $q_{i,j}$ of the matrix $Q$ are given by $q_{i,j} = \langle \mathbf{y}_i,\mathbf{y}_j \rangle'$. Thus, over the special fiber, we have $\langle \mathcal{F}_0,\mathcal{F}_0 \rangle' = Q $ and so from the rank of $ Q$ we read how isotropic $\mathcal{F}_0$ is with respect to $\langle\,,\,\rangle'$. When the rank of the matrix $Q$ is zero, which actually occurs, we have $\langle \mathcal{F}_0,\mathcal{F}_0 \rangle' = 0 $ . 

c) From (a) and (b) we can easily see that $_1\U_{r,s}$ contains points $(\mathcal{F}_0, \mathcal{F}_1)$ where $ \calF_0 \in \mathcal{Q}(s, n)$ and $\calF_1 = t\Lambda_m$.}
\end{Remark}

\begin{Theorem}\label{MSemStable}
 When $(r,s)=(n-2,2)$, $\mathcal{G}$-translates of $_1\U_{n-2,2}$ cover ${\rm M}^{\rm spl}$.
\end{Theorem}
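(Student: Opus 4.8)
The plan is to spread the chart around by the $\calG$-action and show that the complement of the union of all translates is empty, funnelling everything through the worst point of $\Mloc$. Set $\mathcal{Z}:=\calG\cdot{}_1\U_{n-2,2}$, the union of all $\calG$-translates of $_1\U_{n-2,2}$ inside ${\rm M}^{\rm spl}$. Since $\calG$ is smooth over $O_F$ and the action morphism is smooth, $\mathcal{Z}$ is an open $\calG$-stable subscheme, so its complement $\mathcal{Z}^{c}$ is closed, $\calG$-stable and proper over $O_F$, and the goal is to show $\mathcal{Z}^{c}=\emptyset$.

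First I would dispose of the generic fibre. By Theorem \ref{Simpl.1} the generic fibre of $_1\U_{n-2,2}$ is the nonempty open locus where $d$ is invertible and $d\cdot q=2\pi$, so $\mathcal{Z}$ meets ${\rm M}^{\rm spl}\otimes_{O_F}F$; and $\tau$ identifies ${\rm M}^{\rm spl}\otimes F$ with $\Mloc\otimes F\cong\Gr(s,n)\otimes F$ (Proposition \ref{notflat}), which is a homogeneous space under $\calG\otimes\overline{F}\cong\GL_{n}\times\GG_{m}$. A nonempty open $\calG$-stable subset of a homogeneous space is the whole space, so $\mathcal{Z}\supseteq{\rm M}^{\rm spl}\otimes F$ and hence $\mathcal{Z}^{c}\subseteq{\rm M}^{\rm spl}\otimes_{O_F}k$.

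Next I would push $\mathcal{Z}^{c}$ forward along the proper, $\calG$-equivariant morphism $\tau_s\colon{\rm M}^{\rm spl}\otimes k\to\Mloc\otimes k$. If $\mathcal{Z}^{c}\neq\emptyset$ then $\tau_s(\mathcal{Z}^{c})$ is a nonempty closed $\calG$-stable subset of $\Mloc\otimes k$, so it contains a closed $\calG$-orbit; but by \cite[\S 2.4.2, \S 5.5]{PR} the special fibre of $\Mloc$ has a unique closed $\calG$-orbit, namely the worst point $*=(t\Lambda_m\otimes k)$. Hence $*\in\tau_s(\mathcal{Z}^{c})$, i.e. $\mathcal{Z}^{c}\cap\tau_s^{-1}(*)\neq\emptyset$, and it suffices to prove $\tau_s^{-1}(*)\subseteq\mathcal{Z}$.

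For this I would use the description from \S\ref{Evensemistable}: $\tau_s^{-1}(*)$ is the Grassmannian $\Gr(s,n)\otimes k$ of $s$-planes $\calF_0\subset t\Lambda_m\otimes k$, and since $*$ is $\calG$-fixed, $\calG\otimes k$ acts on it through its reductive quotient, which acts on $(t\Lambda_m\otimes k,\langle\,,\,\rangle')$ as a symplectic similitude group (cf. \cite[\S 5]{PR3}, \cite[\S 5.2]{Richarz}); for $s=2$ this group has exactly two orbits on $\Gr(2,n)\otimes k$, the dense orbit of nondegenerate planes and the closed orbit $\mathcal{Q}(2,n)$ of isotropic planes. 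Reading Theorem \ref{Simpl.1} over the locus $\pi=0$, $\calF_1=t\Lambda_m\otimes k$ forces $A=0$, hence $Z=D=0$, so $_1\U_{n-2,2}\cap\tau_s^{-1}(*)$ is the standard affine chart $\{Y'=I_2\}$ of $\Gr(2,n)\otimes k$; being a nonempty open it meets the dense orbit, and by Remark \ref{F0_In_U}(c) it also contains a point of $\mathcal{Q}(2,n)$. Thus $_1\U_{n-2,2}\cap\tau_s^{-1}(*)$ meets every $\calG$-orbit, so its $\calG$-translates cover $\tau_s^{-1}(*)$, giving $\tau_s^{-1}(*)\subseteq\mathcal{Z}$ and contradicting $\mathcal{Z}^{c}\cap\tau_s^{-1}(*)\neq\emptyset$; hence $\mathcal{Z}^{c}=\emptyset$. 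The substantive inputs are the two-orbit structure of the symplectic similitude group on $\Gr(2,n)$ (so that "covering" reduces to "meet both orbits") and the fact, from Remark \ref{F0_In_U}(c), that $_1\U_{n-2,2}$ genuinely touches the closed orbit $\mathcal{Q}(2,n)$; everything else is the formal "spread out from the worst point" mechanism. The point to be careful about is that $\calG\otimes k$ really surjects onto that symplectic similitude quotient, so that its orbits on $\Gr(2,n)\otimes k$ are exactly the symplectic ones — this is where I would lean on \cite{PR3} and \cite{Richarz}.
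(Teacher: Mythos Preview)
Your proof is correct and follows essentially the same strategy as the paper's: reduce to covering the fibre $\tau_s^{-1}(t\Lambda_m\otimes k)\cong\Gr(2,n)\otimes k$ over the worst point, identify the $\calG_k$-action there with the symplectic action having exactly two orbits, and use Remark~\ref{F0_In_U} to see that $_1\U_{n-2,2}$ meets the closed orbit $\mathcal{Q}(2,n)$ (hence both orbits). Your treatment is in fact more explicit than the paper's in two places: you spell out the complement argument showing why it suffices to cover $\tau_s^{-1}(*)$ (the paper just says ``from the construction of splitting models and local models''), and you handle the generic fibre separately. One minor point: the paper identifies the reductive quotient of $\calG_k$ as $Sp(n)_k$ rather than a similitude group, citing \cite[\S 4]{PR3}; this does not affect the orbit structure on $\Gr(2,n)$, but you may want to align your statement with that reference.
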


\begin{proof}
From \S \ref{Naive.spl}, we have the forgetful $\calG$-equivariant morphism $\tau : {\rm M}^{\rm spl} \rightarrow {\rm M}^{\rm loc}\otimes_{O} O_F$ given by $(\mathcal{F}_0,\mathcal{F}_1) \mapsto \mathcal{F}_1$. As in \cite[\S 4]{P} and \cite[\S 2.4.2, 5.5]{PR}, the worst
point of ${\rm M}^{\rm loc}\otimes_{O} O_F$ is given by the $k$-valued point $t\Lambda_m\otimes k$.
From the construction of splitting models and local models, 
in order to show that $\mathcal{G}$-translates of $_1\U_{n-2,2}$ cover ${\rm M}^{\rm spl}$ it is enough to prove that $\mathcal{G}$-translates of $_1\U_{n-2,2}$ cover $\tau^{-1}(t\Lambda_m)$. 

Recall that $ \tau^{-1}_s(t\Lambda_m\otimes k) \cong \Gr(2,n)\otimes k$ and $\mathcal{G}_k$ acts via its action by reduction to $t\Lambda_m\otimes k/t^2\Lambda_m\otimes k$. This action factors through the symplectic group $Sp(n)_k$ of the symplectic form $\langle \,,\,\rangle'$ on $ t \Lambda_m\otimes k $ and gives the map $ \mathcal{G}_k \rightarrow Sp(n)_k$. As in \cite[\S 4]{PR3}, $\mathcal{G}_k$ has $Sp(n)_k$ as its maximal reductive quotient. Therefore, the map $\mathcal{G}_k \rightarrow Sp(n)_k$ is surjective.

Next, the $Sp(n)_k$-action on $\Gr(2, n)$ has two orbits. More precisely, the orbits are 
\[
Q(0) = \{ \mathcal{F}_0 \in \Gr(2, n) \mid \calF	_0~\text{contains no isotropic vectors}\}
\]
and
\[
Q(2) = \{ \mathcal{F}_0 \in \Gr(2, n) \mid \calF	_0~\text{is totally isotropic}\}.
\]
Observe that $Q(2)$ is contained in the (Zariski) closure of $Q(0)$, and so $Q(2) = \mathcal{Q}(2,n)$ is the unique closed orbit (see for example \cite[\S 3.1]{BDE} and \cite{ACGH}). Thus, $\mathcal{Q}(2,n)$ is contained in the closure of each orbit $Q(i)$, $i=0,2$. 

Lastly, from Remark \ref{F0_In_U} we have that $_1\U_{n-2,2}\otimes k$ contains points $(\mathcal{F}_0,t\Lambda_m)$ with $\mathcal{F}_0 \in \mathcal{Q}(2,n) $ and so $_1\U_{n-2,2}$ contains points from all the orbits. Therefore, from all the above we deduce that the $\mathcal{G}$-translates of $_1\U_{n-2,2}$ cover $\tau^{-1}(t\Lambda_m)$. 
\end{proof}

\begin{Corollary}\label{cor.(n-2,2)}
When $(r,s)=(n-2,2)$, the scheme ${\rm M}^{\rm spl}$ has semi-stable reduction. In particular, ${\rm M}^{\rm spl}$ is regular and has special fiber a reduced divisor with two smooth irreducible components intersecting transversely.
\end{Corollary}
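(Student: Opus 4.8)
The plan is to derive Corollary \ref{cor.(n-2,2)} from Proposition \ref{SemiStabCase1} and Theorem \ref{MSemStable}, together with a short analysis of how the two local branches glue. First I would observe that $\calG$ acts on $\rmM^{\rm spl}$ by $O_F$-automorphisms, so each $\calG$-translate of $_1\U_{n-2,2}$ is an open subscheme of $\rmM^{\rm spl}$ isomorphic to $_1\U_{n-2,2}$, and by Theorem \ref{MSemStable} these translates form an open cover. Regularity, flatness over $O_F$, reducedness of the special fibre, smoothness of its branches, and transversality of their intersections may all be checked on an open cover (this is the criterion of \cite[\S 2.16]{dJ}); hence Proposition \ref{SemiStabCase1} propagates all of these to $\rmM^{\rm spl}$. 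This already gives that $\rmM^{\rm spl}$ has semi-stable reduction, is regular, and has reduced special fibre which is locally a normal crossing divisor with two smooth branches. Moreover, being regular and flat over $O_F$ with generic fibre $\cong\Gr(s,n)\otimes F$ (by Proposition \ref{notflat}, as $\tau$ is an isomorphism on generic fibres), $\rmM^{\rm spl}$ is irreducible and its special fibre is pure of dimension $rs=2(n-2)$.

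It remains to pin down that the special fibre has exactly two \emph{global} irreducible components. One of them is the exceptional fibre $E:=\tau_s^{-1}(t\Lambda_m\otimes k)\cong\Gr(2,n)\otimes k$ over the worst point of $\Mloc$: it is irreducible of dimension $s(n-s)=rs$, hence a full component, and on each chart $_1\U_{n-2,2}$ it is precisely the branch $\{D=0\}$ (Remark \ref{F0_In_U}(a), since $D=0$ forces $\calF_1=t\Lambda_m$). For the other branch $\{Q=0\}$ on each chart, I would argue that it is contained in a single irreducible component: on its generic point $D$ has full rank $s$, so (with $A=YZ^t-\pi I_n$) the matrix $A$ has rank $s$ over the special fibre; then $t\calF_1$ is $s$-dimensional and the inclusions $t\calF_1\subseteq\calF_0$ with $\dim\calF_0=s$ force $\calF_0=t\calF_1$. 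Thus the generic point of $\{Q=0\}$ lies in the open locus $V\subseteq\rmM^{\rm spl}\otimes k$ over which $\tau_s$ is an isomorphism onto an open subset of $\Mloc\otimes k$; since the special fibre of $\Mloc$ is integral (\cite[\S 5]{PR}), $V$ is irreducible, so $C_0:=\overline{V}$ is an irreducible component of dimension $rs$ containing every $\{Q=0\}$-branch. Combining this with the cover by $\calG$-translates yields $\rmM^{\rm spl}\otimes k=E\cup C_0$: exactly two smooth irreducible components, meeting transversally by the local computation (along $\{D=0,\ Q=0\}$, i.e.\ the isotropic Grassmannian $\mathcal{Q}(2,n)$).

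The real content sits in the two inputs, Proposition \ref{SemiStabCase1} (the explicit semi-stable chart) and Theorem \ref{MSemStable} (that its $\calG$-translates cover $\rmM^{\rm spl}$); granting those, the rest is formal once one takes care of the globalization. The main obstacle I anticipate is exactly the step above: ensuring that the non-exceptional branch of each chart lands inside the single component $C_0$ and does not produce a third component supported over a deeper stratum of $\Mloc\otimes k$ — which is what the rank equality $\rank A=s$ on the generic point of $\{Q=0\}$ is there to prevent.
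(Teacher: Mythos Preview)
Your proposal is correct and follows the paper's approach: the paper's own proof is the single sentence ``The proof follows from Proposition \ref{SemiStabCase1} and Theorem \ref{MSemStable},'' so your first paragraph already reproduces it. Your second paragraph goes beyond the paper's proof of this corollary by carefully justifying that the two local branches globalize to exactly two irreducible components; the paper leaves this implicit here and only spells out the decomposition $\rmM^{\rm spl}\otimes k=M_1\cup M_2$ later, in the remark following Theorem \ref{Thm 7.4}, using the spin description rather than your direct rank argument. Both routes to the global two-component statement are valid; yours has the advantage of not needing \S\ref{mod_des}.
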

\begin{proof}
The proof follows from Proposition \ref{SemiStabCase1} and Theorem \ref{MSemStable}. 
\end{proof}

\begin{Remark}
{\rm
Note that the construction of $\phi: \Gr(s,n)\otimes k\cong \tau^{-1}_s(t\Lambda_m\otimes k)$	and the observations in Remark \ref{F0_In_U} are true for any even $s$. In fact, for any even $s$, if we could show that the affine chart $_1\U_{r,s}$ is flat, then $_1\U_{r,s}\subset {\rm M}^{\rm spl}$. By using a similar argument as above, we then could prove that $\calG$-translates of $_1\U_{r,s}$ cover ${\rm M}^{\rm spl}$.
}\end{Remark}


\subsection{The Case $(r,s)=(n-1,1)$ and $(r,s)=(n-3,3)$}\label{Odd_s}

By using the explicit description of $_2\U_{n-3,3}$ in \ref{2ndChart}, we show

\begin{Proposition}\label{Prop s=3}
	When $(r,s)=(n-3,3)$, the affine chart $_2\calU_{n-3,3}$ has semi-stable reduction over $O_F$. In particular, $_2\calU_{n-3,3}$ is regular and has special fiber a reduced divisor with two smooth irreducible components intersecting transversely.
\end{Proposition}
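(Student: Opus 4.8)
The plan is to use Corollary \ref{Cor48} to present $_2\calU_{n-3,3}$ explicitly and then to run the argument of Proposition \ref{SemiStabCase1} essentially verbatim. First I would use the six linear generators $a_{i_2,1},a_{i_3,1},a_{i_2,2}-1,a_{i_3,2},a_{i_2,3},a_{i_3,3}-1$ of $I$ to eliminate the corresponding six coordinates. This exhibits $_2\calU_{n-3,3}$ as $\Spec O_F[\underline{v}]/(d\cdot q-2\pi)$, where $\underline{v}$ collects the surviving coordinates (the free variables $x,z_{1,2},z_{1,3}$, the variable $d$, and all but six of the $a_{k,j}$) and $q$ is the specialization of $\sum_{k=1}^{m-1}(a_{k,2}a_{n-1-k,3}-a_{n-1-k,2}a_{k,3})$ under the substitutions $a_{i_2,2}=1$, $a_{i_3,2}=0$, $a_{i_2,3}=0$, $a_{i_3,3}=1$ (the column-one substitutions do not enter $q$). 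Since this has exactly the shape $d\cdot q=2\pi$ treated in Proposition \ref{SemiStabCase1}, it then suffices to show that $\Spec O_F[\underline{v}]/(d\cdot q - 2\pi)$ is regular with reduced special fiber $V(d\cdot q)$ whose two components are smooth divisors meeting transversally.

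The one genuinely new ingredient is to pin down the shape of $q$, which depends on the positions of $i_2,i_3$ with respect to the involution $k\mapsto n-1-k$ on $\{1,\dots,n-2\}$; by the symmetry of the construction (cf. Remark \ref{notiso2}) I expect only two essentially different cases, parallel to the two cases of Proposition \ref{SemiStabCase1}. If $i_2,i_3$ lie in the same pair $\{k_0,n-1-k_0\}$, then the $k=k_0$ summand collapses to the constant $\pm1$ while the other summands are untouched, so $q=\pm1+B$ with $B$ a nondegenerate quadratic form of rank $4(m-2)$ in the variables attached to the remaining $m-2$ pairs; here $V(q)$ is a smooth affine quadric of dimension $4(m-2)-1\geq 3$ (recall $m\geq 3$, since $s\le r$ forces $n\geq 6$), hence geometrically integral. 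If $i_2,i_3$ lie in distinct pairs, then after substitution $q$ contains one of the variables $a_{n-1-i_2,3}$ or $a_{i_2,3}$ (according as $i_2\le m-1$ or $i_2\ge m$) with coefficient $\pm 1$, and this variable occurs in no other monomial of $q$; solving for it shows $V(q)$ is an affine space. In either case $V(q)$ is a smooth, irreducible hypersurface cut out by the principal ideal $(q)$, and $q$ is not divisible by $d$.

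With this in hand the remainder is formally identical to the proof of Proposition \ref{SemiStabCase1}. Over the special fiber ($\pi=0$) one has $V(d\cdot q)=V(d)\cup V(q)$; the component $V(d)$ is an affine space, $V(q)$ is smooth and irreducible by the previous paragraph, and the scheme-theoretic intersection $V(d,q)$ is smooth of codimension two and meets the two components transversally since $q$ does not involve the variable $d$. Reducedness of the special fiber follows from $(d\cdot q)=(d)\cap(q)$, using that $(d)$ and $(q)$ are prime in the polynomial ring and $d\notin(q)$; and since $(d)$ and $(q)$ restrict to principal ideals on the total space, the scheme is regular (cf. \cite[Remark 1.1.1]{Ha}, \cite[\S 2.16]{dJ}). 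Alternatively, regularity at a point of the special fiber with $d=q=0$ can be read off the differential of $d\cdot q-2\pi$, which equals $-2\,d\pi$ there and is nonzero because $2$ is invertible in $k$. This gives semi-stable reduction over $O_F$, with the special fiber a reduced divisor with two smooth irreducible components crossing transversally.

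The step I expect to be the main obstacle is the bookkeeping behind the case analysis for $q$: one must verify, for every admissible pair of positions $(i_2,i_3)$, that the relevant monomials of $q$ survive the substitutions intact, that the distinguished variable occurs with a unit coefficient in a single monomial, and that $\pm1+B$ (in the same-pair case) is irreducible — the latter being immediate from smoothness of a positive-dimensional affine quadric. Once these elementary but slightly fiddly verifications are in place, the semi-stability, regularity and normal-crossings assertions follow exactly as in \S\ref{Evensemistable}.
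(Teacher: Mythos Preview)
Your proposal is correct and follows essentially the same approach as the paper: both use the explicit presentation from Corollary~\ref{Cor48}, split into the same two cases (your ``same pair'' versus ``distinct pairs'' dichotomy under $k\mapsto n-1-k$ corresponds exactly to the paper's chosen representatives $(i_2,i_3)=(m-1,m)$ and $(1,2)$), and verify semi-stability by the argument of Proposition~\ref{SemiStabCase1}. One small slip: in the distinct-pairs case the linearly-occurring variable is $a_{n-1-i_2,3}$ regardless of whether $i_2\le m-1$ or $i_2\ge m$, since $a_{i_2,3}$ has been set to~$0$.
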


\begin{proof}

Recall from \S \ref{4.2.2} that the quadratic polynomial $\sum_{k=1}^{m-1}(a_{k,2}a_{n-1-k,3}-a_{n-1-k,2}a_{k,3})$  depends on the positions of $i_2, i_3$. By symmetry, we only need to consider the case when $1\leq i_2, i_3\leq m-1$ and the case when $1\leq i_2\leq m-1, m\leq i_3\leq n-2$.

We consider these two cases separately in this proof. In the first case, for simplicity, we can set $i_2=1, i_3=2$. Thus, $_2\calU_{n-3,3}$ is isomorphic to $V(\calI)=\Spec R/\calI$, where 
	\[
	R=O_F[x,d,z_{1,2},z_{1,3},(a_{i,1})_{3\leq i\leq n-2}, (a_{i,2})_{3\leq i\leq n-2}, (a_{i,3})_{3\leq i\leq n-2}],
	\]
and
\[
\calI=(d(a_{n-2,3}+\sum_{k=3}^{m-1}a_{n-1-k,3} a_{k,2}-\sum_{k=m}^{n-4}a_{n-1-k,3}a_{k,2}-a_{n-3,2})-2\pi).	
\]	
It is easy to see that we have two irreducible components in the special fiber, where
\[
I_1=(d),\quad I_2=(a_{n-2,3}+\sum_{k=3}^{m-1}a_{n-1-k,3} a_{k,2}-\sum_{k=m}^{n-4}a_{n-1-k,3}a_{k,2}-a_{n-3,2}).
\]	
Both irreducible components are isomorphic to $\mathbb{A}_k^{3(n-3)}$, so they are smooth with the correct dimension. Their intersection is isomorphic to $\mathbb{A}_k^{3(n-3)-1}$, which is also smooth. Since $I_1, I_2$ are prime ideals, it is enough to prove that the special fiber $V(\calI_s)$ is reduced by showing $\calI_s=I_1\cap I_2$. By using similar arguments as in Proposition \ref{SemiStabCase1}, we get that $V(\calI_s)$ is reduced and $V(\calI)$ is regular. 

In the second case, we can set $\Delta=\{1,\dots,n-2\}/ \{m-1, m\}$, and let $i_2=m-1, i_3=m$. Then $_2\calU_{n-3,3}$ is isomorphic to $V(\mathcal{J})=\Spec R'/\mathcal{J}$, where
\[
R'=O_F[x,d,z_{1,2},z_{1,3},(a_{i,1})_{i\in\Delta}, (a_{i,2})_{i\in\Delta}, (a_{i,3})_{i\in\Delta}], 
\]
and
\[
\calJ=(d(1+\sum_{k=1}^{m-2}a_{n-1-k,3}a_{k,2}-\sum_{k=m+1}^{n-2}a_{n-1-k,3}a_{k,2})-2\pi).
\]
Again, we have two irreducible components over the special fiber:
\[
\calJ_1=(d),\quad \calJ_2=(1+\sum_{k=1}^{m-2}a_{n-1-k,3}a_{k,2}-\sum_{k=m+1}^{n-2}a_{n-1-k,3}a_{k,2}).
\]
Both of them are smooth and of dimension $3(n-3)$ (Note that in this case, the second component $V(\calJ_2)$ is not isomorphic to $\mathbb{A}_k^{3(n-3)}$).  Their intersection is also smooth and of dimension $3n-10$. By using the same arguments as above, we can show that $V(\calJ_s)$ is reduced and $V (\calJ )$ is regular. Thus, $_2\calU_{n-3,3}$ has semi-stable reduction over $O_F$ .
\end{proof}

From Theorem \ref{Prop.s=1} and Proposition \ref{Prop s=3}, we have that $_2\U_{n-1,1}$ is smooth and $_2\U_{n-3,3}$ has semi-stable reduction. Thus, both $ _2\U_{n-1,1}$ and $_2\U_{n-3,3} $ are $O_F$-flat. As in \S \ref{Evensemistable}, we deduce that these affine charts are open subschemes of the corresponding splitting model ${\rm M}^{\rm spl}$. We will prove that ${\rm M}^{\rm spl}$ is smooth when $(r,s)=(n-1,1)$ and ${\rm M}^{\rm spl}$ has semi-stable reduction when $(r,s)=(n-3,3)$ by showing that $\calG$-translates of 
$_2\U_{n-1,1}$ and $_2\U_{n-3,3}$ cover ${\rm M}^{\rm spl}$.

Recall from \S \ref{LocalModelVariants}, the $\calG$-equivariant morphism $\tau: {\rm M}^{\rm spl}\rightarrow\rmM^{\rm loc} \otimes_{O}O_F$ which is given by $(\calF_0,\calF_1)\mapsto \calF_1$ and let $\tau_s: {\rm M}^{\rm spl}\otimes k\rightarrow\rmM^{\rm loc} \otimes k$ be the morphism over the special fiber. When $s$ is odd, the unique closed $\calG$-orbit of ${\rm M}^{\rm loc} \otimes k$ is the orbit of $\Lambda'\otimes k$ (see \S \ref{LocalModelVariants}).
From the construction of the local model it is enough to show that  $\calG_k$-translates of $_2\calU_{n-1,1}\otimes k$ and $_2\calU_{n-3,3}\otimes k$ cover $\tau_s^{-1}(\Lambda'\otimes k)$.

When $(r,s)=(n-1,1)$, the inverse image $\tau_s^{-1}(\Lambda'\otimes k)$ contains points $(\calF_0,\Lambda'\otimes k)$ satisfying $t\Lambda'\subset\calF_0$. Observe that $t\Lambda'=\tspan_{k}\{-e_1\}$ and $\calF_0$ has rank $1$. Thus, $\tau_s^{-1}(\Lambda'\otimes k)=(\text{span}_{k}\{-e_1\}, \Lambda'\otimes k)$ is just a unique point (Note that it is different from the case when $s$ is even, see \S \ref{Evensemistable}, where we have $t\Lambda_m=0$). From \S \ref{AffineChart2}, we see that $_2\calU_{n-1,1}\otimes k$ contains that point and so $\calG_k$-translates of $_2\calU_{n-1,1}\otimes k$ cover $\tau_s^{-1}(\Lambda'\otimes k)$. Hence, $\calG$-translates of $_2\calU_{n-1,1}$ cover ${\rm M}^{\rm spl}$.

When $s$ is odd and $\geq 3$, we have
\[
\tau_s^{-1}(\Lambda'\otimes k)=\{(\calF_0,\Lambda')\mid \tspan_{k}\{-e_1\}\subset \calF_0\subset \Lambda'\otimes k,\quad t\calF_0=(0)\}.
\]
Set $\calF_0=\tspan_k\{e_1, v_2, \dots, v_s\}$, then $t\calF_0=0$ and $\calF_0\subset \Lambda'\otimes k$ imply that $v_2, \dots, v_s$ are a linear combination of $\{-e_2,\dots, -e_m, \pi e_{m+1},\dots, \pi e_{n-1}\}$. Thus, we consider the morphism:
\[
\pi: \tau_s^{-1}(\Lambda'\otimes k)\rightarrow \Gr(s-1,n-2)\otimes k
\] 
given by $(\calF_0,\Lambda')\mapsto \tspan_k\{v_2, \cdots, v_s\}$. Here $\Gr(s-1,n-2)\otimes k$ is the Grassmannian of $(s-1)$-dimensional subspaces of 
\[
\Lambda'/\{-\pi^{-1}e_1, -e_1\}=\tspan_k\{-e_2,\dots, -e_m, \pi e_{m+1},\dots, \pi e_{n-1}\}.
\]
We also have a section:
\[
\phi: \Gr(s-1,n-2)\otimes k\rightarrow {\rm M}^{\rm spl}\otimes k,
\]
given by $W\rightarrow (\tspan_k(e_1, W),\Lambda')$. Similar to \S \ref{Evensemistable}, it is easy to see that the image of the section $\phi$ is $\tau_s^{-1}(\Lambda'\otimes k)$. Hence $\tau_s^{-1}(\Lambda'\otimes k)$ is isomorphic to $\Gr(s-1,n-2)\otimes k$. As in \S \ref{Evensemistable}, we can equip $\tau_s^{-1}(\Lambda'\otimes k)$ with a non-degenerate alternating form, i.e. $\bb tv,tw\pp':=(tv,w)=v^tJ_{n-2}w$ for $tv, tw\in \tau_s^{-1}(\Lambda'\otimes k)$. Denote by $\mathcal{Q}(s-1,n-2)$ the closed subscheme of $\Gr(s-1,n-2)\otimes k$ which parametrizes all isotropic $(s-1)$-subspaces with respect to $\bb \ , \ \pp'$.

The following observations concern the special fiber of $_2\U_{r,s}$ when $s$ is odd. We keep the same notation as in \S \ref{AffineChart2}.
 
\begin{Remark}\label{OddRemarks}
{\rm
(1) Set $\mathbf{a}_i$ the $i$-th column of the matrix $M_2$; $\mathbf{a}_i$ can be expressed as the element $tv\in \tau_s^{-1}(\Lambda'\otimes k)$, where $tv=[ 0~0~a_{1,i}~\cdots ~a_{n-2,i}\mid ~0~\cdots~ 0]^t$. Thus,
\begin{flalign*}
\bb \mathbf{a}_i,\mathbf{a}_j\pp' &=[a_{1,i}~\cdots ~a_{n-2,i}]J_{n-2}[a_{1,j}~\cdots ~a_{n-2,j}]^t	\\
&=\sum_{k=1}^{m-1}(a_{k,i}a_{n-1-k.j}-a_{n-1-k.i}a_{k,j})\\
&= q_{i,j}.
\end{flalign*}
 
 
  
(2) For the special fiber $\rmM^{\rm spl}\otimes k$, consider $(_2\calU_{r,s}\otimes k) \cap \tau_s^{-1}(\Lambda'\otimes k)$. We get $C=0$ since $A=0$. The first column of $C$ is $\mathbf{a}_1+z_{1,2}\mathbf{a}_2+\cdots +z_{1,s}\mathbf{a}_s$. By setting $i=i_2, \dots, i_s$, this gives us $z_{1,2}=\cdots =z_{1,s}=0$ and so $\mathbf{a}_1=\mathbf{0}$. Thus,
 \[
 \calF_0=\left[\begin{array}{c}
 	M_1\\
 	M_2\\ \hline
 	0\\
 	0
 \end{array}\right],
 \]
 where 
 \[
 M_1=\left[\begin{array}{cccc}
 	 0& 0&\cdots & 0\\
 	1& 0&\cdots &0
 	 \end{array}\right],~
 M_2=\left[\begin{array}{cccc}
	\mathbf{0} & \mathbf{a}_2 &\cdots &\mathbf{a}_s
\end{array}\right]=
\left[\begin{array}{cccc}
 	 0 & a_{1,2}&\cdots & a_{1,s}\\
 	\vdots &\vdots &&\vdots\\
 	0& a_{n-2,2}& \cdots & a_{n-2,s}\\ 
 	 \end{array}\right].
 \]
 By definition, $Q=[q_{i,j}]=0$ is equivalent to $\bb \mathbf{a}_i,\mathbf{a}_j \pp'=0$ for $2\leq i, j\leq s$. Therefore, $\pi(\calF_0)$ is totally isotropic under $\bb ~,~ \pp'$ if and only if $Q=0$ (which actually occurs). Thus, 
 \[
 \phi(\mathcal{Q}(s-1,n-2))\subset (_2\calU_{r,s}\otimes k)\cap \tau_s^{-1}(\Lambda'\otimes k).
 \] 
 }\end{Remark}
 
 \begin{Theorem}\label{thm_cover_odd}\
 	When $(r,s)=(n-3,3)$, $\calG$-translates of $_2\calU_{n-3,3}$ cover $\rmM^{\rm spl}$.
 \end{Theorem}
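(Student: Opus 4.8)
The plan is to run, in the odd case, the argument from the proof of Theorem~\ref{MSemStable}, with the even Grassmannian $\Gr(s,n)$ replaced by the odd one $\Gr(s-1,n-2)=\Gr(2,n-2)$. As recorded just above the statement, it is enough to show that $\calG_k$-translates of ${}_2\calU_{n-3,3}\otimes k$ cover $\tau_s^{-1}(\Lambda'\otimes k)$; the reason this suffices is the standard one: $\tau\colon\rmM^{\rm spl}\to\rmM^{\rm loc}\otimes_O O_F$ is projective and $\calG$-equivariant and the orbit of $\Lambda'\otimes k$ is the unique closed $\calG_k$-orbit in the special fibre of $\rmM^{\rm loc}$, so if the open $\calG$-stable saturation $\bigcup_{g\in\calG}g\cdot{}_2\calU_{n-3,3}$ missed a point of $\rmM^{\rm spl}$, its complement would be a nonempty $\calG$-stable closed subscheme with nonempty $\calG_k$-stable closed $\tau_s$-image, which would then contain $\Lambda'\otimes k$, contradicting that the translates already cover $\tau_s^{-1}(\Lambda'\otimes k)$.

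First I would recall the description of $\tau_s^{-1}(\Lambda'\otimes k)$ obtained just before the theorem: for $s=3$ the section $\phi$ identifies $\tau_s^{-1}(\Lambda'\otimes k)$, $\calG_k$-equivariantly, with $\Gr(2,n-2)\otimes k$, the Grassmannian of $2$-planes in the $(n-2)$-dimensional space $\Lambda'/\{-\pi^{-1}e_1,-e_1\}$, equipped with the non-degenerate alternating form $\langle\,,\,\rangle'$. The crucial group-theoretic input is that the $\calG_k$-action on this Grassmannian is induced by the reduction of $\calG_k$ acting on $\Lambda'/\{-\pi^{-1}e_1,-e_1\}$ and, as in \cite[\S 4]{PR3} (see also \cite[\S 5.5]{PR}), factors through the symplectic group $Sp(n-2)_k$ of $\langle\,,\,\rangle'$, which is the maximal reductive quotient of $\calG_k$; in particular $\calG_k\to Sp(n-2)_k$ is surjective. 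The $Sp(n-2)_k$-action on $\Gr(2,n-2)$ then has exactly two orbits, distinguished by the rank of the restriction of $\langle\,,\,\rangle'$: the open orbit $Q(0)$ of $2$-planes containing no isotropic vector, and the orbit $Q(2)=\mathcal{Q}(2,n-2)$ of totally isotropic $2$-planes, which lies in the Zariski closure of $Q(0)$ and is therefore the unique closed orbit (compare \cite[\S 3.1]{BDE}, \cite{ACGH}).

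To conclude, I would invoke Remark~\ref{OddRemarks}(2), which gives $\phi(\mathcal{Q}(2,n-2))\subset({}_2\calU_{n-3,3}\otimes k)\cap\tau_s^{-1}(\Lambda'\otimes k)$, so ${}_2\calU_{n-3,3}\otimes k$ meets the closed orbit $Q(2)=\mathcal{Q}(2,n-2)$. Since $\mathcal{Q}(2,n-2)$ is contained in the closure of each of the two orbits, the open subset ${}_2\calU_{n-3,3}\otimes k$ meets every $Sp(n-2)_k$-orbit, hence its $\calG_k$-saturation is all of $\Gr(2,n-2)\otimes k\cong\tau_s^{-1}(\Lambda'\otimes k)$. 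This proves the theorem; combined with Proposition~\ref{Prop s=3} it gives that $\rmM^{\rm spl}$ has semi-stable reduction for $(r,s)=(n-3,3)$.

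The step I expect to be the main obstacle is the group-theoretic one: verifying that in this odd-signature case the $\calG_k$-action on $\tau_s^{-1}(\Lambda'\otimes k)$ really factors surjectively through $Sp(n-2)_k$ --- the relevant symplectic space now being the $(n-2)$-dimensional quotient $\Lambda'/\{-\pi^{-1}e_1,-e_1\}$ rather than the full $t\Lambda_m\otimes k$ of the even case --- and confirming that $\mathcal{Q}(2,n-2)$, as defined in \S\ref{Odd_s} via $\langle\,,\,\rangle'$, is exactly the closed $Sp(n-2)_k$-orbit $Q(2)$ (it has the expected dimension), so that the orbit-closure argument transfers verbatim from the even case.
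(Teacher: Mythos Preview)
Your proposal is correct and follows essentially the same approach as the paper: reduce to covering $\tau_s^{-1}(\Lambda'\otimes k)\cong\Gr(2,n-2)\otimes k$, note that the $\calG_k$-action factors surjectively through $Sp(n-2)_k$, identify the two $Sp(n-2)_k$-orbits with $\mathcal{Q}(2,n-2)$ the unique closed one, and invoke Remark~\ref{OddRemarks}(2) to see that ${}_2\calU_{n-3,3}\otimes k$ meets the closed orbit and hence all orbits. The obstacle you flag---the surjectivity $\calG_k\twoheadrightarrow Sp(n-2)_k$ with the symplectic space being the quotient $\Lambda'/\{-\pi^{-1}e_1,-e_1\}$---is exactly the step the paper also asserts without further detail, citing the same structural input from \cite{PR3}.
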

 
 \begin{proof}
 	It is enough to prove that $\calG$-translates of $_2\calU_{n-3,3}$ cover $\tau^{-1}(\Lambda')$. We use similar arguments as in the proof of Theorem \ref{MSemStable}.
Since $\mathcal{G}_k$ acts on $\tau_s^{-1}(\Lambda'\otimes k)\simeq \Gr(2,n-2)\otimes k$, 
this action factors through the symplectic group $Sp(n-2)_k$ of the symplectic form $\langle \,,\,\rangle'$ on $ \Lambda'/ \{-\pi^{-1}e_1, -e_1\} $, and gives the surjective map $ \mathcal{G}_k \rightarrow Sp(n-2)_k$ where
\[
Sp(n-2)_k=\{g\in SL(n-2)_k\mid g^tJ_{n-2}g=J_{n-2}\}.
\]
By \cite[\S 3]{BDE}, the $Sp(n-2)_k$-action on $\Gr(2,n-2)\otimes k$ has two orbits: $Q(0)$ and $Q(2)$, where 
\[
Q(i)=\{\calF_0\in \Gr(2,n-2)\mid \dim(rad(\calF_0))=i\},
\] 
for $i=0, 2$.  Similar to the proof of Theorem \ref{MSemStable}, we have $Q(2)\subset \overline{Q(0)}$. Thus, $Q(2)$ is the unique closed orbit. Observe that $Q(2)$ contains all totally isotropic subspaces $\pi(\calF_0)$, and so $Q(2)= \mathcal{Q}(2,n-2)$. 

From Remark \ref{OddRemarks} (2), we have that $_2\U_{n-3,3}\otimes k$ contains points $(\mathcal{F}_0,\Lambda'\otimes k)$ with $\pi(\mathcal{F}_0) \in \mathcal{Q}(2,n-2) $. By identifying $\tau_s^{-1}(\Lambda'\otimes k)$ with $\Gr(2,n-2)\otimes k$, we get that $_2\U_{n-3,3}$ contains points from all the orbits. Therefore, from all the above we deduce that the $\mathcal{G}$-translates of $_2\U_{n-3,3}$ cover $\tau^{-1}(\Lambda')$.
 \end{proof}
 
 From Propositions \ref{Prop.s=1} and \ref{Prop s=3} and Theorem \ref{thm_cover_odd} we have:
 
 \begin{Corollary}\label{thm_smooth_scheme}
 \begin{itemize}
\item[a)] When $(r,s)=(n-1,1)$, $\rmM^{\rm spl}$ is a smooth scheme.
 	
\item[b)] When $(r,s)=(n-3,3)$, $\rmM^{\rm spl}$ has semi-stable reduction over $O_F$. In particular, $\rmM^{\rm spl}$ is regular and has special fiber a reduced divisor with two smooth irreducible components intersecting transversely.
\end{itemize}
 \end{Corollary}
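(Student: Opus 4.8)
The plan is to deduce both statements directly from the chart computations of \S\ref{Odd_s}, using that smoothness, and likewise regularity together with reducedness of the special fibre and the normal-crossings property, can all be tested on a Zariski open cover, and that $\calG$ acts on $\rmM^{\rm spl}$ by automorphisms over $\Spec O_F$. So once a single chart is understood, the rest is a local-to-global argument.

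For part (a): by Theorem~\ref{Prop.s=1} the affine chart $_2\calU_{n-1,1}$ is isomorphic to $\mathbb{A}^{n-1}_{O_F}$, hence smooth over $\Spec O_F$; being $O_F$-flat it is an open subscheme of $\rmM^{\rm spl}$, and, as recorded in \S\ref{Odd_s}, its $\calG$-translates cover $\rmM^{\rm spl}$. Therefore every point of $\rmM^{\rm spl}$ has an open neighbourhood isomorphic to $\mathbb{A}^{n-1}_{O_F}$, so $\rmM^{\rm spl}$ is smooth; no further difficulty arises here. For part (b): by Proposition~\ref{Prop s=3} the chart $_2\calU_{n-3,3}$ has semi-stable reduction over $O_F$, so it is $O_F$-flat and hence an open subscheme of $\rmM^{\rm spl}$, and by Theorem~\ref{thm_cover_odd} its $\calG$-translates cover $\rmM^{\rm spl}$. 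Since regularity, reducedness of the special fibre and the normal-crossings condition are Zariski-local and invariant under the $\calG$-action, they pass from $_2\calU_{n-3,3}$ to all of $\rmM^{\rm spl}$; this already gives that $\rmM^{\rm spl}$ is regular and that $\rmM^{\rm spl}\otimes k$ is a reduced normal-crossings divisor with at most two branches through each point.

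The one point that is not purely formal is the \emph{global} claim in (b) that $\rmM^{\rm spl}\otimes k$ has exactly two irreducible components. To establish it I would use the $\calG$-equivariant forgetful morphism $\tau_s\colon \rmM^{\rm spl}\otimes k\to\rmM^{\rm loc}\otimes k$, which is surjective onto the integral scheme $\rmM^{\rm loc}\otimes k$ (see \S\ref{LocalModelVariants}) and whose target carries a unique closed $\calG$-orbit $Z$. On each chart the two branches are the locus carried into $Z$ by $\tau_s$ and a locus that dominates $\rmM^{\rm loc}\otimes k$ (cf. Remark~\ref{OddRemarks} and the section $\phi$ of \S\ref{Odd_s}); globally the loci of the first kind assemble into $\tau_s^{-1}(Z)$, which is irreducible since it fibres over the irreducible $Z$ with Grassmannian fibres $\Gr(s-1,n-2)\otimes k$, and those of the second kind assemble into a single component because $\rmM^{\rm loc}\otimes k$ is irreducible. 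Thus there are exactly two components, and their transversality is part of Proposition~\ref{Prop s=3}. I expect the main obstacle to be exactly this reconciliation of the local charts with the global $\calG$-orbit structure of $\rmM^{\rm loc}\otimes k$ --- checking that $\tau_s^{-1}(Z)$ really is a single component of the correct codimension and that nothing else lies over the worst locus --- for which the explicit equations of \S\ref{Odd_s} and the orbit computation in the proof of Theorem~\ref{thm_cover_odd} are the relevant tools; everything else in the statement is immediate from the results cited above.
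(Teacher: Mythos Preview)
Your proposal is correct and follows exactly the paper's approach: the corollary is deduced immediately from Theorem~\ref{Prop.s=1}, Proposition~\ref{Prop s=3}, and Theorem~\ref{thm_cover_odd} (together with the covering argument for $s=1$ recorded just before Theorem~\ref{thm_cover_odd}). The paper itself treats the result as an immediate consequence of these three inputs and gives no further argument; your additional discussion of why there are \emph{globally} exactly two irreducible components in case~(b)---via the $\calG$-equivariant map $\tau_s$, the irreducibility of the closed orbit and of $\rmM^{\rm loc}\otimes k$, and the Grassmannian-fibre description of $\tau_s^{-1}(Z)$---is a welcome elaboration that the paper defers to the Remark following Theorem~\ref{Thm 7.4}.
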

 
 \begin{Remark}
 {\rm
 For $s$ odd and $\geq 5$, note that we have 
 \[
 \pi: \tau_{s}^{-1}(\Lambda'\otimes k)\rightarroweq \Gr(s-1,n-2)\otimes k,
 \]
 and
 \[
 \mathcal{Q}(s-1,n-2)\subset (_2\U_{r,s}\otimes k)\cap \tau^{-1}_s(\Lambda\otimes k).
 \]
If we can show that the affine chart $_2\U_{r,s}$ is flat, then we have $_2\U_{r,s}\subset {\rm M}^{\rm spl}$. By using a similar argument as the proof of Theorem \ref{thm_cover_odd}, we could prove that $\calG$-translates of $_2\U_{r,s}$ cover ${\rm M}^{\rm spl}$.

 }\end{Remark}
 
\begin{Remark}\label{loc_splt}
    {\rm 
    When $ (r,s) = (n-1,1)$, the local model $ \Mloc \subset {\rm M}^{\wedge}$ is smooth (see \cite[\S 5.3]{PR}). 
    Consider the morphism $\tau :   \rmM^{\rm spl} \longrightarrow  {\rm M}^{\rm loc} \otimes_{O} O_F$. The generic fiber of all of these is the same (see \S \ref{LocalModelVariants}). In particular, from all the above discussion in this section we can see that $  \rmM^{\rm spl} \cong \Mloc \otimes_{O} O_{F}$.
    }
\end{Remark}
 
\section{Application to Shimura Varieties}\label{Shimura} 
In this section, we give the most immediate application of Corollaries \ref{cor.(n-2,2)} and \ref{thm_smooth_scheme} to unitary Shimura varieties. Indeed, by using work of Rapoport and Zink \cite{RZbook} we first construct $p$-adic integral models for the signatures $ (n-s,s)$ with $ s\leq 3$ where the level subgroup at $p$ is the special maximal parahoric subgroup $P_{\{m\}}$ defined in \S \ref{ParahoricSbgrs}. These models have simple and explicit moduli descriptions and are \'etale locally around each point isomorphic to naive local models. Then by using the above Corollaries and producing a linear modification we obtain smooth integral models of the corresponding Shimura varieties  when $s=1$ and semi-stable models when $s=2$ or $s=3$, i.e. they are regular and the irreducible components of the special fiber are smooth divisors crossing normally. (See Theorem \ref{RegLM}.) 

We start with an imaginary quadratic field $K$ and we fix an embedding $\varepsilon: K\rightarrow \mathbb{C}$. Let $W$ be a $n$-dimensional $K$-vector space, equipped with a non-degenerate hermitian form $\phi$. Consider the group $G = \GU_n$ of unitary similitudes for $(W,\phi)$ of dimension $n\geq 3$ over $K$. Assume that $n=2m$ is even. We fix a conjugacy class of homomorphisms $h: \Res_{\mathbb{C}/\mathbb{R}}\mathbb{G}_{m,\mathbb{C}}\rightarrow \GU_n$ corresponding to a Shimura datum $(G,X) = (\GU_n,X_h)$ of signature $(r,s)$. The pair $(G,X)$ gives rise to a Shimura variety $\Sh(G,X)$ over the reflex field $E$. (See \cite[\S 1.1]{PR} and \cite[\S 3]{P} for more details on the description of the unitary Shimura varieties.) Let $p$ be an odd prime number which ramifies in $K$. Set $K_1=K\otimes_{\QQ} \QQ_p$ with a uniformizer $\pi$, and $V=W\otimes_{\QQ} \QQ_p$. 
We assume that the hermitian form $\phi$ is split on $V$, i.e. there is a basis $e_1, \dots, e_n$ such that $\phi(e_i, e_{n+1-j})=\delta_{ij}$ for $1\leq i, j\leq n$. We denote by 
\[
\Lambda_i = \text{span}_{O_{K_1}} \{\pi^{-1}e_1, \dots, \pi^{-1}e_i, e_{i+1}, \dots, e_n\}
\]
the standard lattices in $V$. We can complete this into a self dual lattice chain by setting $\Lambda_{i+kn}:=\pi^{-k}\Lambda_i$ (see \S \ref{Prel.1}). Denote by $P_{\{m\}}$ the stabilizer of $\Lambda_m$ in $G(\mathbb{Q}_p)$. We let $\mathcal{L}$ be the
self-dual multichain consisting of lattices $\{\Lambda_j\}_{j\in n\mathbb{Z} \pm m}$. Here $\mathcal{G} = \underline{{\rm Aut}}(\mathcal{L})$ is the (smooth) group scheme over $\mathbb{Z}_p$ with $P_{\{m\}} = \mathcal{G}(\mathbb{Z}_p)$ the subgroup of $G(\mathbb{Q}_p)$ fixing the lattice chain $\mathcal{L}$.   

Choose also a sufficiently small compact open subgroup $K^p$ of the prime-to-$p$ finite adelic points $G({\mathbb A}_{f}^p)$ of $G$ and set $\mathbf{K}=K^pP_{\{m\}}$. The Shimura variety  ${\rm Sh}_{\mathbf{K}}(G, X)$ with complex points
 \[
 {\rm Sh}_{\mathbf{K}}(G, X)(\mathbb{C})=G(\mathbb{Q})\backslash X\times G({\mathbb A}_{f})/\mathbf{K}
 \]
is of PEL type and has a canonical model over the reflex field $E$. We set $\mathcal{O} = O_{E_v}$ where $v$ the unique prime ideal of $E$ above $(p)$.

We consider the moduli functor $\mathcal{A}^{\rm naive}_{\mathbf{K}}$ over $\Spec \mathcal{O} $ given in \cite[Definition 6.9]{RZbook}:\\
A point of $\mathcal{A}^{\rm naive}_{\mathbf{K}}$ with values in the 
$\mathcal{O}  $-scheme $S$ is the isomorphism class of the following set of data $(A,\iota,\bar{\lambda}, \bar{\eta})$:
\begin{enumerate}
\item An object $(A,\iota)$, where $A$ is an abelian scheme with relative dimension $n$ over $S$ (terminology
of \cite{RZbook}), compatibly endowed with an
action of $\calO$: 
\[ \iota: \calO \rightarrow \text{End} \,A \otimes \mathbb{Z}_p.\]
    \item A $\mathbb{Q}$-homogeneous principal polarization $\bar{\lambda}$ of the $\mathcal{L}$-set $A$.
    \item A $K^p$-level structure
    \[
\bar{\eta} : H_1 (A, {\mathbb A}_{f}^p) \simeq W \otimes  {\mathbb A}_{f}^p \, \text{ mod} \, K^p
    \]
which respects the bilinear forms on both sides up to a constant in $({\mathbb A}_{f}^p)^{\times}$ (see loc. cit. for
details).

The set $A$ should satisfy the determinant condition (i) of loc. cit.
\end{enumerate}

For the definitions of the terms employed here we refer to loc.cit., 6.3–6.8 and \cite[\S 3]{P}. The functor $\mathcal{A}^{\rm naive}_{\mathbf{K}}$ is representable by a quasi-projective scheme over $\mathcal{O}$. Since the Hasse principle is satisfied for the unitary group, we can see as in loc. cit. that there is a natural isomorphism
\[
\mathcal{A}^{\rm naive}_{\mathbf{K}} \otimes_{\calO} E_v = {\rm Sh}_{\mathbf{K}}(G, X)\otimes_{E} E_v.
\]

As is explained in \cite{RZbook} and \cite{P} the naive local model ${\rm M}^{\rm naive}$ is connected to the moduli scheme $\mathcal{A}^{\rm naive}_{\mathbf{K}}$ via the local model diagram 
\[
\mathcal{A}^{\rm naive}_{\mathbf{K}} \ \xleftarrow{\psi_1} \Tilde{\mathcal{A}}^{\rm naive}_{\mathbf{K}} (G,X) \xrightarrow{\psi_2} {\rm M}^{\rm naive}
\]
where the morphism $\psi_1$ is a $\mathcal{G}$-torsor and $\psi_2$ is a smooth and $\mathcal{G}$-equivariant morphism. Therefore, there is a relatively representable smooth
morphism
 \[
 \mathcal{A}^{\rm naive}_{\mathbf{K}} \to [\mathcal{G} \backslash  {\rm M}^{\rm naive}]
 \]
where the target is the quotient algebraic stack.


Next, denote by $ \mathcal{A}^{\rm flat}_{\mathbf{K}} $ the flat closure of ${\rm Sh}_{\mathbf{K}}(G, X)\otimes_{E} E_v$ in $ \mathcal{A}^{\rm naive}_{\mathbf{K}}$. Recall from \S \ref{LocalModelVariants} that the flat closure of $ {\rm M}^{\rm naive} \otimes_{\mathcal{O}} E_v$ in ${\rm M}^{\rm naive}$ is by definition the local model $\Mloc $. By the above we can see, as in \cite{PR}, that there is a relatively representable smooth morphism of relative dimension ${\rm dim} (G)$,
\[\mathcal{A}^{\rm flat}_{\mathbf{K}} \to [\mathcal{G} \backslash \Mloc].\]
This of course implies that $\mathcal{A}^{\rm flat}_{\mathbf{K}}$ is \'etale locally isomorphic to the local model $\Mloc$.

One can now consider a variation of the moduli of abelian schemes $\mathcal{A}_{\mathbf{K}}$ where we add in the moduli problem an additional subspace in the Hodge filtration $ {\rm Fil}^0 (A) \subset H_{dR}^1(A)$ of the universal abelian variety $A$ (see \cite[\S 6.3]{H} for more details) with certain conditions to imitate the definition of the naive splitting model $\mathcal{M}$. (See \S \ref{LocalModelVariants} for the definition of naive splitting models.)
More precisely, $\mathcal{A}_{\mathbf{K}}$ associates to an $O_{K_1}$-scheme $S$ the set of isomorphism classes of objects $(A,\iota,\bar{\lambda}, \bar{\eta},\mathscr{F}_0) $. Here $(A,\iota,\bar{\lambda}, \bar{\eta})$ is an object of  $\mathcal{A}^{\rm naive}_{\mathbf{K}}(S).$ Set $\mathscr{F}_1 := {\rm Fil}^0 (A) $. The final ingredient $\mathscr{F}_0$ of an object of $\mathcal{A}_{\mathbf{K}}$ is the subspace $ \mathscr{F}_0 \subset \mathscr{F}_1 \subset H_{dR}^1(A) $ of rank $s$ which satisfies the following conditions: 
\[
 (\iota(\pi)+\pi ) \mathscr{F}_1 \subset \mathscr{F}_0, \quad (\iota(\pi)-\pi )\mathscr{F}_0 = (0).
\]
There is a forgetful projective morphism
\[
\tau_1 :   \mathcal{A}_{\mathbf{K}} \longrightarrow \mathcal{A}^{\rm naive}_{\mathbf{K}}\otimes_{\mathcal{O}} O_{K_1}
\]
defined by $(A,\iota,\bar{\lambda}, \bar{\eta},\mathscr{F}_0) \mapsto (A,\iota,\bar{\lambda}, \bar{\eta}) $. Moreover, $\mathcal{A}_{\mathbf{K}}$ has the same \'etale local structure as $\mathcal{M}$; it is a ``linear modification" of $\mathcal{A}^{\rm naive}_{\mathbf{K}}\otimes_{\mathcal{O}} O_{K_1}$ in the sense of \cite[\S 2]{P} (see also \cite[\S 15]{PR2}). As we showed in Proposition \ref{naive.notflat}, the scheme $\mathcal{M}$ is never flat and by the above, the
same is true for $\mathcal{A}_{\mathbf{K}}$. 

 \begin{Theorem}\label{RegLM}
Assume that $ (r,s) = (n-1,1)$ or $(n-2,2)$ or $(n-3,3)$. For every $K^p$ as above, there is a
 scheme $\mathcal{A}^{\rm spl}_{\mathbf{K}}$, flat over $\Spec(O_{K_1})$, 
 with
 \[
\mathcal{A}^{\rm spl}_{\mathbf{K}}\otimes_{O_{K_1}} K_1={\rm Sh}_{\mathbf{K}}(G, X)\otimes_{E} K_1,
 \]
 and which supports a local model diagram
  \begin{equation}\label{LMdiagramReg1}
\begin{tikzcd}
&\wti{\mathcal{A}}^{\rm spl}_{\mathbf{K}}(G, X)\arrow[dl, "\pi^{\rm reg}_K"']\arrow[dr, "q^{\rm reg}_K"]  & \\
\mathcal{A}^{\rm spl}_{\mathbf{K}}  &&  {\rm M}^{\rm spl}
\end{tikzcd}
\end{equation}
such that:
\begin{itemize}
\item[a)] $\pi^{\rm reg}_{\mathbf{K}}$ is a $\mathcal{G}$-torsor for the parahoric group scheme $\calG$ that corresponds to $P_{\{m\}}$.

\item[b)] $q^{\rm reg}_{\mathbf{K}}$ is smooth and $\calG$-equivariant.

\item[c)] When $(r,s) = (n-1,1)$, $\mathcal{A}^{\rm spl}_{\mathbf{K}}$ is a smooth scheme. 
\item[c')] When $(r,s) = (n-2,2)$ or $(r,s) =(n-3,3)$, $\mathcal{A}^{\rm spl}_{\mathbf{K}}$ is regular and has special fiber which is a reduced divisor with
normal crossings.
\end{itemize}
 \end{Theorem}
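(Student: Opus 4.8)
\noindent The plan is to reduce everything to the structural results for the splitting model ${\rm M}^{\rm spl}$ obtained in \S\ref{spl}, by transporting them along a local model diagram obtained from the Rapoport--Zink diagram of $\mathcal{A}^{\rm naive}_{\mathbf{K}}$ by linear modification. Set $\mathcal{A}^{\rm spl}_{\mathbf{K}}:=\tau_1^{-1}(\mathcal{A}^{\rm flat}_{\mathbf{K}})$, a closed subscheme of $\mathcal{A}_{\mathbf{K}}$. Recall that $\mathcal{A}_{\mathbf{K}}$ is a linear modification of $\mathcal{A}^{\rm naive}_{\mathbf{K}}\otimes_{\mathcal{O}}O_{K_1}$ in the sense of \cite[\S2]{P} (see also \cite[\S15]{PR2}), with forgetful morphism $\tau_1$, just as $\mathcal{M}$ is the linear modification of ${\rm M}^{\rm naive}\otimes_{\mathcal{O}}O_{K_1}$ with forgetful morphism $\tau$.

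Starting from the Rapoport--Zink local model diagram $\mathcal{A}^{\rm naive}_{\mathbf{K}} \xleftarrow{\psi_1} \wti{\mathcal{A}}^{\rm naive}_{\mathbf{K}}(G,X) \xrightarrow{\psi_2} {\rm M}^{\rm naive}$ of \cite{RZbook}, I would base change to $O_{K_1}$ and form the linear modification, obtaining
\[
\mathcal{A}_{\mathbf{K}} \xleftarrow{\pi_1} \wti{\mathcal{A}}_{\mathbf{K}}(G,X) \xrightarrow{q_1} \mathcal{M},
\]
where $\wti{\mathcal{A}}_{\mathbf{K}}(G,X)$ parametrizes objects of $\mathcal{A}_{\mathbf{K}}$ together with a trivialization of the polarized de Rham chain of $A$ compatible with $\iota$, the map $\pi_1$ is a $\mathcal{G}$-torsor, and $q_1$ is smooth and $\mathcal{G}$-equivariant (it reads off the additional subspaces $\mathscr{F}_0\subset\mathscr{F}_1$ via the trivialization). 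The formation of a linear modification commutes with \'etale localization, and the scheme-theoretic closure of the generic fiber commutes with the faithfully flat morphism $\psi_1$ and the smooth morphism $\psi_2$; hence $\psi_1^{-1}(\mathcal{A}^{\rm flat}_{\mathbf{K}})=\psi_2^{-1}(\Mloc)$, so that $\mathcal{A}^{\rm flat}_{\mathbf{K}}$ is \'etale locally isomorphic to $\Mloc$, and therefore $\mathcal{A}^{\rm spl}_{\mathbf{K}}$ is \'etale locally isomorphic to ${\rm M}^{\rm spl}$. Concretely, setting $\wti{\mathcal{A}}^{\rm spl}_{\mathbf{K}}(G,X):=\pi_1^{-1}(\mathcal{A}^{\rm spl}_{\mathbf{K}})=q_1^{-1}({\rm M}^{\rm spl})$ and letting $\pi^{\rm reg}_{\mathbf{K}}$ and $q^{\rm reg}_{\mathbf{K}}$ be the restrictions of $\pi_1$ and $q_1$, one obtains the local model diagram (\ref{LMdiagramReg1}): here $\pi^{\rm reg}_{\mathbf{K}}$ is the base change of the $\mathcal{G}$-torsor $\pi_1$ along $\mathcal{A}^{\rm spl}_{\mathbf{K}}\hookrightarrow\mathcal{A}_{\mathbf{K}}$, hence a $\mathcal{G}$-torsor, which proves (a), while $q^{\rm reg}_{\mathbf{K}}$ is the base change of the smooth $\mathcal{G}$-equivariant morphism $q_1$ along ${\rm M}^{\rm spl}\hookrightarrow\mathcal{M}$, hence smooth and $\mathcal{G}$-equivariant, which proves (b).

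The remaining assertions then follow formally. For flatness over $O_{K_1}$: by Corollaries \ref{cor.(n-2,2)} and \ref{thm_smooth_scheme}, ${\rm M}^{\rm spl}$ is flat over $O_F=O_{K_1}$ (it is smooth, resp.\ has semi-stable reduction); since $\mathcal{A}^{\rm spl}_{\mathbf{K}}$ is \'etale locally isomorphic to ${\rm M}^{\rm spl}$ and flatness is \'etale local on the source, $\mathcal{A}^{\rm spl}_{\mathbf{K}}$ is flat. For the generic fiber: $\tau_1$ is an isomorphism on generic fibers, so
\[
\mathcal{A}^{\rm spl}_{\mathbf{K}}\otimes_{O_{K_1}}K_1\cong\mathcal{A}^{\rm flat}_{\mathbf{K}}\otimes_{\mathcal{O}}K_1=\big({\rm Sh}_{\mathbf{K}}(G,X)\otimes_E E_v\big)\otimes_{E_v}K_1={\rm Sh}_{\mathbf{K}}(G,X)\otimes_E K_1.
\]
For (c) and (c'): the diagram (\ref{LMdiagramReg1}) exhibits $\wti{\mathcal{A}}^{\rm spl}_{\mathbf{K}}(G,X)$ as a common smooth surjective cover of $\mathcal{A}^{\rm spl}_{\mathbf{K}}$ (using that $\mathcal{G}$ is smooth, so $\pi^{\rm reg}_{\mathbf{K}}$ is smooth) and of ${\rm M}^{\rm spl}$, so the two schemes share the same \'etale-local structure up to a smooth factor of relative dimension $\dim G$. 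Since smoothness, regularity, and the property of having reduced special fiber with normal crossings all ascend and descend along smooth morphisms, Corollary \ref{thm_smooth_scheme}(a) gives (c) for $(r,s)=(n-1,1)$, while Corollaries \ref{cor.(n-2,2)} and \ref{thm_smooth_scheme}(b) give (c') for $(r,s)=(n-2,2)$ and $(r,s)=(n-3,3)$.

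The one step demanding genuine care -- the main obstacle -- is the compatibility of the ``flat closure'' operations through the local model diagram (the equality $\psi_1^{-1}(\mathcal{A}^{\rm flat}_{\mathbf{K}})=\psi_2^{-1}(\Mloc)$), together with the stability of the linear modification construction of \cite[\S2]{P} under \'etale base change; these together guarantee that the \'etale-local isomorphism $\mathcal{A}^{\rm naive}_{\mathbf{K}}\cong{\rm M}^{\rm naive}$ really does propagate to $\mathcal{A}^{\rm spl}_{\mathbf{K}}\cong{\rm M}^{\rm spl}$. Once this is secured, the theorem follows directly from the structure of ${\rm M}^{\rm spl}$ established in \S\ref{spl}.
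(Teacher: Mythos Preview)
Your proof is correct and follows essentially the same approach as the paper: both use the linear modification machinery of \cite[\S2]{P} to transport the local model diagram for $\mathcal{A}^{\rm naive}_{\mathbf{K}}$ (resp.\ $\mathcal{A}^{\rm flat}_{\mathbf{K}}$) to one for $\mathcal{A}^{\rm spl}_{\mathbf{K}}$, and then read off (c), (c') from Corollaries \ref{cor.(n-2,2)} and \ref{thm_smooth_scheme}. The only cosmetic difference is that the paper builds $\wti{\mathcal{A}}^{\rm spl}_{\mathbf{K}}(G,X)$ as the fiber product $\wti{\mathcal{A}}^{\rm loc}_{\mathbf{K}}(G,X)\times_{{\rm M}^{\rm loc}}{\rm M}^{\rm spl}$ and then takes the $\calG$-quotient, whereas you first form the diagram for $\mathcal{A}_{\mathbf{K}}$ and then restrict via $\tau_1^{-1}(\mathcal{A}^{\rm flat}_{\mathbf{K}})$; these yield the same object, and indeed the introduction of the paper already defines $\mathcal{A}^{\rm spl}_{\mathbf{K}}$ exactly as you do.
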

\begin{proof}
From the above we have the local model diagram 
\[
\begin{tikzcd}
&\wti{\mathcal{A}}^{\rm loc}_{\mathbf{K}}(G, X)\arrow[dl, "\psi_1"']\arrow[dr, "\psi_2"]  & \\
\mathcal{A}^{\rm loc}_{\mathbf{K}}  && {\rm M}^{\rm loc}
\end{tikzcd}
\]
where the morphism $\psi_1$ is a $\mathcal{G}$-torsor and $\psi_2$ is a smooth and $\mathcal{G}$-equivariant morphism. Set 
\[
\wti{\mathcal{A}}^{\rm spl}_{\mathbf{K}}(G, X)=\wti{\mathcal{A}}^{\rm loc}_{\mathbf{K}}(G, X)\times_{{\rm M}^{\rm loc}}{\rm M}^{\rm spl}
\]
which carries a diagonal $\mathcal{G}$-action. Since $\tau : {\rm M}^{\rm spl} \rightarrow {\rm M}^{\rm loc}\otimes_{O} O_F$ is projective (see \S \ref{spl}), we can see (\cite[\S 2]{P}) that the quotient
\[
\pi^{\rm reg}_K: \wti{\mathcal{A}}^{\rm spl}_{\mathbf{K}} \longrightarrow \mathcal{A}^{\rm spl}_{\mathbf{K}}:=\calG\backslash \wti{\mathcal{A} }^{\rm spl}_K(G, X)
\]
is represented by a scheme and gives a $\calG$-torsor. (Recall that the morphism $\tau$ induces an isomorphism on the generic fibers.) This is an example of a linear modification, see \cite[\S 2]{P}. The projection gives a smooth $\calG$-morphism
\[
q^{\rm reg}_{\mathbf{K}}: \wti{\mathcal{A}}^{\rm spl}_{\mathbf{K}} \longrightarrow {\rm M}^{\rm spl}
\]
which completes the local model diagram. Property (c) (resp. (c')) follows from 
Theorem \ref{thm_smooth_scheme} (resp. Corollaries \ref{cor.(n-2,2)} and \ref{thm_smooth_scheme}) and properties (a) and (b) which imply that $ \mathcal{A}^{\rm spl}_{\mathbf{K}}$ and
$ {\rm M}^{\rm spl}$ are locally isomorphic for the \'etale topology. 
\end{proof}
\begin{Remarks}
{\rm  Similar results can be obtained for corresponding Rapoport-Zink formal schemes. (See \cite[\S 4]{HPR} for an example of this parallel treatment.)
}
\end{Remarks}  
 
\section{Moduli Description of $\rmM^{\rm spl}$}\label{mod_des}
 
In this section, we show that by adding the ``spin condition" in the naive splitting model $\mathcal{M}$, we get the splitting model ${\rm M}^{\rm spl}$ for signature $s\leq 3$. 

We use the notation of \S \ref{Prelim}, \S \ref{LocalModelVariants} and we set $ W= \wedge^{n}_F (V\otimes_{F_0}F).$ Recall that the symmetric form $( \, , \, )$ splits over $V$ and thus there is a canonical decomposition
\[
W = W_1 \oplus W_{-1}
\]
of $W $ as an $\SO_{2n} \left((\, ,\,)\right)(F)$-representation (see \cite[\S 7]{RSZ}, \cite[\S 2]{Sm3}). For a standard lattice $\Lambda_i$ in $V$ (see \S \ref{Prelim}), we set $W(\Lambda_i)=\wedge^n(\Lambda_i\otimes_{O_{F_0}}O_F )$ and $W(\Lambda_i)_{\pm 1 } = W_{\pm 1} \cap W(\Lambda).$ For an $O_F$-scheme $S$ and $\epsilon \in \{\pm1\}$, we set 
\[
L_{i,\epsilon}(S)=\text{im}[ W(\Lambda_i)_{\epsilon}\otimes_{O_F} \mathcal{O}_S  \longrightarrow  W(\Lambda)\otimes_{O_F} \mathcal{O}_S ].
\]

We now formulate the spin condition: 

(5) (Spin condition) the line bundle $\wedge^n \calF_1 \subset W(\Lambda_m)\otimes_{O_F} \mathcal{O}_S$ is contained in $ L_{m,(-1)^s}(S)$.

The {\it spin splitting model} $\mathcal{M}^{\rm spin}$ is the closed subscheme of $\mathcal{M}$ defined by imposing the spin condition. We have the following inclusions of closed subschemes ${\rm M}^{\rm spl}\subset \calM^{\rm spin}\subset \calM$ which are all equalities between generic fibers (see \cite[\S 7]{PR} and \cite[\S 9]{RSZ}). 
The restriction of $\tau$ on $\calM^{\rm spin}$ gives us
\[
\tau : \mathcal{M}^{\rm spin} \rightarrow {\rm M}^{\rm spin}\otimes_{O} O_F,
\]
where ${\rm M}^{\rm spin}$ is the spin local model defined in \cite{PR}. In particular, ${\rm M}^{\rm spin}$ is the closed subscheme of  ${\rm M}^{\wedge}$ that classifies points given by $\mathcal{F}_1$ which satisfy the spin condition. We want to mention that Smithling \cite[Theorem 1.3]{Sm2} proved that ${\rm M}^{\rm spin}$ is topologically flat. Also, from \cite[Remark 9.9]{RSZ}, we get that the spin condition can be characterized as the following condition:

(5') The rank of $ (t+ \pi )$ on $\mathcal{F}_1$ has the same parity as $s$.\footnote{We interchange $r$ and $s$ in the notation relative to \cite{RSZ}.} 


\begin{Remark}
  {\rm 
Recall from the proof of Proposition \ref{Naive.spl} that the point   $(\calF_0, t\Lambda_m)$ when $s$ is odd and the point $(\calF_0, \Lambda')$ when $s$ is even does not lift to the generic fiber. On the other hand,  we can easily see that these points do not satisfy the condition (5') and so there are not in $\mathcal{M}^{\rm spin}$.
  }
\end{Remark}

\begin{Proposition}\label{Prop 7.2}
\begin{itemize}
\item[a)] When $s$ is even, $_1\U_{r,s}\subset {\calM}^{\rm spin}$.
\item[b)] When $s$ is odd, $_2\U_{r,s}\subset {\calM}^{\rm spin}$.
\end{itemize}
\end{Proposition}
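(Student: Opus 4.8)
The plan is to verify the spin condition directly in each of the two charts, using its reformulation (5$'$) recalled above (valid by \cite[Remark~9.9]{RSZ}): at a point of $\calM$ the spin condition holds precisely when the rank of $(t+\pi)$ acting on $\calF_1$ has the same parity as $s$. Since $\calM^{\rm spin}$ is a closed subscheme of $\calM$ and $_1\U_{r,s}$, $_2\U_{r,s}$ are the charts of $\calM$ studied in \S\ref{Two affine charts}, it suffices to describe the endomorphism $(t+\pi)|_{\calF_1}$ in the coordinates there and compute the parity of its rank at every geometric point; the scheme-theoretic upgrade of this pointwise statement is addressed at the end.

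For (a), assume $s$ even and work in $_1\U_{r,s}$ (\S\ref{AffineChart1}), where $\calF_1$ is the column span of $\left[\begin{smallmatrix}A\\ I_n\end{smallmatrix}\right]$ with $A=YZ^t-\pi I_n$. A short computation with the defining relations of the chart (in particular $A+\pi I_n=YZ^t$ and $AY=\pi Y$, cf.\ Lemma~\ref{Lemma1} and the discussion preceding it) shows that $(t+\pi)$ maps $\calF_1$ into itself and is represented, in the frame $I_n$, by the $n\times n$ matrix $N:=A+\pi I_n$. The isotropy relation $A^t=-J_nAJ_n$ is one of the defining equations of $_1\U_{r,s}$, and it yields $N^t=-J_nNJ_n$, i.e.\ $J_nN$ is skew-symmetric (using $J_n^t=-J_n$ and $J_n^2=-I_n$). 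Since a skew-symmetric matrix over a field of characteristic $\neq2$ has even rank, $\rank N$ is even at every geometric point of $_1\U_{r,s}$, which is the parity of $s$; hence (5$'$) holds there.

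For (b), assume $s$ odd and work in $_2\U_{r,s}$ (\S\ref{AffineChart2}), where $\calF_1$ is the column span of $\left[\begin{smallmatrix}I_n\\ A\end{smallmatrix}\right]$ with $A=\left[\begin{smallmatrix}T & B\\ C & Y\end{smallmatrix}\right]$ and $T=xI_2$. Using the relations of Lemma~\ref{lm44} together with their consequences noted in the following Remark (notably $Y^2=\pi_0I_{n-2}$, $Y^t=-J_{n-2}YJ_{n-2}$, $B_1=B_2Y$, $BC=0$ and $Ct_0+YC=0$), a direct computation shows that $(t+\pi)$ preserves $\calF_1$ and is represented, in the frame $I_n$, by $N':=\left[\begin{smallmatrix}t_0+\pi I_2 & 0\\ C & Y+\pi I_{n-2}\end{smallmatrix}\right]$. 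The block $t_0+\pi I_2$ has determinant $\pi^2-\pi_0=0$ but is nonzero, hence of rank exactly $1$; moreover $Ct_0+YC=0$ forces the second column of $\left[\begin{smallmatrix}t_0+\pi I_2\\ C\end{smallmatrix}\right]$ to equal $\pi$ times the first modulo the columns of $\left[\begin{smallmatrix}0\\ Y+\pi I_{n-2}\end{smallmatrix}\right]$, so $\rank N'=\rank(Y+\pi I_{n-2})+1$ at every geometric point. Finally $Y^t=-J_{n-2}YJ_{n-2}$ gives that $J_{n-2}(Y+\pi I_{n-2})$ is skew-symmetric, so $\rank(Y+\pi I_{n-2})$ is even and $\rank N'$ is odd, the parity of $s$; hence (5$'$) holds on $_2\U_{r,s}$.

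The main obstacle is not the linear-algebra computation above but passing from ``the rank of $(t+\pi)$ on $\calF_1$ has the correct parity at every geometric point'' to the scheme-theoretic inclusion $_1\U_{r,s}\subset\calM^{\rm spin}$ (resp.\ $_2\U_{r,s}\subset\calM^{\rm spin}$). This is immediate whenever the chart in question is known to be $O_F$-flat: it then equals the scheme-theoretic closure of its generic fibre, over which the spin condition is vacuous since all the models agree over $F$. In particular this already covers the signatures $s\leq3$ relevant to Theorem~\ref{SpinIntro}, since $_1\U_{n-2,2}$, $_2\U_{n-1,1}$ and $_2\U_{n-3,3}$ are smooth or have semi-stable reduction by Proposition~\ref{SemiStabCase1}, Theorem~\ref{Prop.s=1} and Proposition~\ref{Prop s=3}. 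For a general signature $s$ one instead invokes the scheme-theoretic form of the equivalence (5)$\Leftrightarrow$(5$'$) from \cite[Remark~9.9]{RSZ}, using that $p$ is odd so that $W(\Lambda_m)=W(\Lambda_m)_{1}\oplus W(\Lambda_m)_{-1}$ and the spin condition (5) is a genuinely closed condition, namely the vanishing of the ``wrong-eigenspace'' component of the line $\wedge^n\calF_1$.
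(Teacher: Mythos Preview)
Your argument is correct and in fact cleaner than the paper's, though the route is genuinely different. The paper restricts to the special fibre ($\pi=0$), expresses $(t+\pi)\calF_1=\calF_0\cdot Z^t$, and then invokes the full simplification of \S\ref{Two affine charts} (namely $Z=\wti Y\cdot D$, resp.\ the analogous decomposition of $Z_1,Z_2$) to identify the rank with $\rank D$ (resp.\ $1+\rank D$), which is even (resp.\ odd) because $D$ is skew-symmetric. You instead work at every geometric point and compute the matrix of $(t+\pi)|_{\calF_1}$ directly in the given frame, obtaining $N=A+\pi I_n$ in case~(a) and the block-triangular $N'$ in case~(b); the parity then falls out of the isotropy relation $A^t=-J_nAJ_n$ (resp.\ $Y^t=-J_{n-2}YJ_{n-2}$) alone, via the observation that $J_nN$ (resp.\ $J_{n-2}(Y+\pi I_{n-2})$) is skew-symmetric. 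This bypasses Theorems~\ref{Simpl.1} and~\ref{Simpl.2} entirely for the parity computation, which is a real gain; the paper's argument, by contrast, makes the link with the explicit coordinates $D,Q$ that drive the rest of \S\ref{spl}--\S\ref{mod_des}. Your final paragraph on the scheme-theoretic upgrade is also more careful than the paper, which simply asserts ``it is enough to consider the rank over the special fibre'' and leaves implicit exactly the flatness/closure reasoning you spell out; for the applications in the paper only $s\le3$ is needed, and there your flatness argument via Proposition~\ref{SemiStabCase1}, Theorem~\ref{Prop.s=1} and Proposition~\ref{Prop s=3} is entirely adequate.
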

\begin{proof}
It's enough to consider the rank$((t+\pi)\calF_1)$ over the special fiber ($\pi = 0$) for the following two cases.

{\it Case 1:} 
Consider the affine chart $_1\U_{r,s}$ around $(\calF_0, \Lambda_m)$. Recall from \S \ref{AffineChart1} that
\[  
   \mathcal{F}_1 =    \left[\ 
\begin{matrix}[c]
A\\ \hline
I_n  
\end{matrix}\ \right], \quad \mathcal{F}_0 =  \left[\ 
\begin{matrix}[c]
X_1\\ \hline
X_2  
\end{matrix}\ \right].
\]
By $(t + \pi) \mathcal{F}_1 \subset \mathcal{F}_0$, we get
\[
 (t+\pi)\calF_1=
  \left[\ 
\begin{matrix}[c]
\pi_0I_n + \pi A    \\ \hline
A + \pi I_n  
\end{matrix}\ \right] =  \left[\ 
\begin{matrix}[c]
X_1   \\ \hline
X_2
\end{matrix}\ \right]Z^t,
\]
where $Z$ is of size $n\times s$. Since rank$(\calF_0)=s$, the rank of $(t + \pi) \mathcal{F}_1$ amounts to rank$(Z^t)$. Note that $Z^t = D^t\cdot \wti{Y}^t$, where $D$ is of size $s\times s$ and rank$(\wti{Y})=s$. Thus, rank$(Z^t)=$ rank$(D^t)$. Since $D$ is a skew-symmetric matrix, the rank of $D$ is always an even number. Therefore the rank of $(t+\pi)\calF_1$, where $(\calF_0, \calF_1)\in \, _{1}\U_{r,s}$, is always even.

{\it Case 2:} 
Consider the affine chart $_2\U_{r,s}$ around $(\calF_0, \Lambda')$. In this case, from \S \ref{AffineChart2}, we have
\[
\calF_1=\left[\begin{array}{c}
	I_n\\ \hline
	A
\end{array}
\right],\quad
\calF_0=X=\left[\begin{array}{c}
	X_1\\ \hline
	X_2
\end{array}
\right]
\]
and 
\[
(t+\pi)\calF_1=
\left[\begin{array}{c}
	X_1\\ \hline
	X_2
\end{array}
\right]\cdot
\left[\begin{array}{cc}
	Z_1^t & Z_2^t
\end{array}
\right],
\]
where $Z_1$ is of size $2\times s$ and $Z_2$ is of size $(n-2)\times s$. Thus, the rank of $(t + \pi) \mathcal{F}_1$ amounts to rank$([Z_1^t\ Z_2^t])$. 
Over the special fiber we have
\[
Z_1^t=\left[\begin{array}{cc}
	1& 0 \\ 
	z_{1,2} & z_{2,2}\\
	\vdots &\vdots\\
	z_{1,s} & z_{2,s}
	\end{array}
\right], \quad
Z_2^t=\left[\begin{array}{ccc}
	0&\cdots & 0 \\ 
	z_{1,2}'&\cdots & z_{n-2,2}'\\
	\vdots & \ddots & \vdots \\
	z_{1,s}'&\cdots & z_{n-2,s}'
	\end{array}
\right].
\]
We treat $Z_1^t, Z_2^t$ separately. By (\ref{Z'_2}), $Z_2^t$ can be expressed as
\[
Z_2^t=\left[\begin{array}{c}0\cdots 0\\ D\end{array}\right]\cdot \tilde{M}_2=
\left[\begin{array}{ccc}
0&\cdots &0\\
d_{2,2}&\cdots & d_{2,s}\\
\vdots &\ddots &\vdots\\
d_{s,2}&\cdots & d_{s,s}
\end{array}\right]\cdot
\tilde{M}_2,
\]
where $D$ is a skew-symmetric matrix, and $\tilde{M}_2$ contains an $I_{s-1}$-minor. Thus, $Z_2^t$ is generated by the columns of $\left[\begin{array}{c}0\cdots 0\\ D\end{array}\right]$. For $Z_1^t$, we claim that the second column is generated by $\left[\begin{array}{c}0\cdots 0\\ D\end{array}\right]$. From (\ref{Z_1t}), we have
\[
\left[\begin{array}{c}
0\\
z_{2,2}\\
\vdots\\
z_{2,s}
\end{array}\right]=\left[\begin{array}{ccc}
0&\cdots&0\\
d_{2,2}&\cdots & d_{2,s}\\
\vdots &\ddots & \vdots\\
d_{s,2}&\cdots & d_{s,s}
\end{array}\right]
\left[\begin{array}{ccc}
q_{2,1}&\cdots & q_{2,s}\\
\vdots &\ddots & \vdots\\
q_{s,1}&\cdots & q_{s,s}
\end{array}\right]
\left[\begin{array}{c}
1\\
z_{1,2}\\
\vdots\\
z_{1,s}
\end{array}\right]
\]
in the special fiber. Recall that $D\cdot Q=0$, where 
\[
Q=\left[\begin{array}{ccc}
q_{2,2}&\cdots & q_{2,s}\\
\vdots &\ddots &\vdots\\
q_{s,2}&\cdots & q_{s,s}
\end{array}\right].
\]
Then 
\[
\left[\begin{array}{c}
0\\
z_{2,2}\\
\vdots\\
z_{2,s}
\end{array}\right]=\left[\begin{array}{cccc}
0&0&\cdots &0\\
*&0 &\cdots & 0\\
\vdots&\vdots &\ddots & \vdots\\
*& 0&\cdots & 0
\end{array}\right]
\left[\begin{array}{c}
1\\
z_{1,2}\\
\vdots\\
z_{1,s}
\end{array}\right]=
\left[\begin{array}{c}
0\\
*\\
\vdots\\
*
\end{array}\right],
\]
which can be expressed as
\[
\left[\begin{array}{c}
0\\
*\\
\vdots\\
*
\end{array}\right]=q_{2,1}\left[\begin{array}{c}
0\\
d_{2,2}\\
\vdots\\
d_{s,2}
\end{array}\right]+\cdots +q_{s,1}
\left[\begin{array}{c}
0\\
d_{2,s}\\
\vdots\\
d_{s,s}
\end{array}\right].
\]
Therefore, the second column is generated by $\left[\begin{array}{c}0\cdots 0\\ D\end{array}\right]$. From all the above, we deduce that the rank of $[Z_1^t\ Z_2^t]$ amounts to the rank of
\[
\left[\begin{array}{cccc}
	1& 0 &\cdots &0\\ 
	z_{1,2} & d_{2,2}& \cdots & d_{2,s}\\
	\vdots &\vdots& \ddots&\vdots\\
	z_{1,s} & d_{s,2}&\cdots & d_{s,s}
	\end{array}
\right].
\]
Since rank($D$) is even, and the first column is linear independent to the rest columns, the rank of $(t+\pi)\calF_1$, where $(\calF_0, \calF_1)\in _{2}\U_{r,s}$ is always odd. 

Therefore, $_1\U_{r,s}\subset {\calM}^{\rm spin}$ if and only if $s$ is even and $_2\U_{r,s}\subset {\calM}^{\rm spin}$ if and only if $s$ is odd.
\end{proof}

In particular, when $s=2$ we have $_{1}\U_{n-2,2}\subset \calM^{\rm spin}$ and when $s=3$ we have $_{2}\U_{n-3,3}\subset \calM^{\rm spin}$. Moreover, since ${\rm M}^{\rm spin}$ is topologically flat, or in other words, the underlying topological spaces of ${\rm M}^{\rm spin}$ and ${\rm M}^{\rm loc}$ coincide, we can repeat the $\mathcal{G}$-translates argument as in Theorems \ref{MSemStable} and \ref{thm_cover_odd} and obtain

\begin{Proposition}\label{Prop 7.3}
\begin{itemize}
\item[a)] When $(r,s)=(n-2,2)$, $\calG$-translates of $_1\calU_{n-2,2}$ cover $\calM^{\rm spin}$.
\item[b)] When $(r,s)=(n-3,3)$, $\calG$-translates of $_2\calU_{n-3,3}$ cover $\calM^{\rm spin}$.
\end{itemize}	
\end{Proposition}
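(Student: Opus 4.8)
The plan is to deduce Proposition \ref{Prop 7.3} from the coverings of ${\rm M}^{\rm spl}$ already obtained in Theorems \ref{MSemStable} and \ref{thm_cover_odd}; the only new input is that $\calM^{\rm spin}$ and ${\rm M}^{\rm spl}$ occupy the same underlying topological space inside $|\calM|$, which is exactly the topological flatness of ${\rm M}^{\rm spin}$.

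First I would record the scheme-theoretic relations. Since the spin condition (5) (equivalently (5')) constrains only $\calF_1$, one has $\calM^{\rm spin}=\calM\times_{{\rm M}^{\wedge}}{\rm M}^{\rm spin}=\tau^{-1}({\rm M}^{\rm spin})$, exactly as ${\rm M}^{\rm spl}=\tau^{-1}({\rm M}^{\rm loc})$; in particular $\calG$ preserves $\calM^{\rm spin}$ because $\tau$ is $\calG$-equivariant. As ${\rm M}^{\rm loc}$ is the flat closure of the common generic fibre and ${\rm M}^{\rm spin}\subset{\rm M}^{\wedge}$ is a closed subscheme containing that generic fibre, we get ${\rm M}^{\rm loc}\subset{\rm M}^{\rm spin}$, hence ${\rm M}^{\rm spl}\subset\calM^{\rm spin}$ as closed subschemes of $\calM$. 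Finally, by Smithling's theorem ${\rm M}^{\rm spin}$ is topologically flat (\cite[Theorem 1.3]{Sm2}), i.e. $|{\rm M}^{\rm spin}|=|{\rm M}^{\rm loc}|$; taking preimages under $\tau$ yields $|\calM^{\rm spin}|=|{\rm M}^{\rm spl}|$ inside $|\calM|$.

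For part (a): the chart $_1\calU_{n-2,2}$ is an open subscheme of $\calM$, and by Proposition \ref{Prop 7.2}(a) it is contained in $\calM^{\rm spin}$ scheme-theoretically (here one uses that the chart is reduced, it having semi-stable reduction by Proposition \ref{SemiStabCase1}, so that the pointwise verification of the spin condition in the proof of Proposition \ref{Prop 7.2} forces genuine containment). Thus $_1\calU_{n-2,2}$ is an open subscheme of $\calM^{\rm spin}$, and by $\calG$-equivariance so is every translate $g\cdot{}_1\calU_{n-2,2}$. By Theorem \ref{MSemStable} these translates cover ${\rm M}^{\rm spl}$, so $\bigcup_g|g\cdot{}_1\calU_{n-2,2}|=|{\rm M}^{\rm spl}|=|\calM^{\rm spin}|$; a family of open subschemes of $\calM^{\rm spin}$ whose underlying opens exhaust $|\calM^{\rm spin}|$ is an open cover of $\calM^{\rm spin}$, and (a) follows. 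Part (b) is the verbatim argument with $_2\calU_{n-3,3}$, Proposition \ref{Prop s=3}, Proposition \ref{Prop 7.2}(b) and Theorem \ref{thm_cover_odd} in place of $_1\calU_{n-2,2}$, Proposition \ref{SemiStabCase1}, Proposition \ref{Prop 7.2}(a) and Theorem \ref{MSemStable}.

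The only delicate point is the upgrade from ``covers ${\rm M}^{\rm spl}$'' to ``covers $\calM^{\rm spin}$'': it is legitimate precisely because each translate is a bona fide open subscheme of $\calM^{\rm spin}$ and because $|\calM^{\rm spin}|=|{\rm M}^{\rm spl}|$, the latter being the topological flatness input. As a self-contained alternative I would also note that one may simply rerun the proofs of Theorems \ref{MSemStable} and \ref{thm_cover_odd} with ${\rm M}^{\rm spl}$ replaced throughout by $\calM^{\rm spin}$: the morphism $\tau_s\colon\calM^{\rm spin}\otimes k\to{\rm M}^{\rm spin}\otimes k$ has the same worst point $t\Lambda_m$ (resp. $\Lambda'$), its fibre over that point is the Grassmannian $\Gr(s,n)\otimes k$ (resp. $\Gr(s-1,n-2)\otimes k$) on which $\calG_k$ acts through $Sp(n)_k$ (resp. $Sp(n-2)_k$) with two orbits $Q(2)\subset\overline{Q(0)}$, and Remark \ref{F0_In_U} (resp. Remark \ref{OddRemarks}) exhibits points of $_1\calU_{n-2,2}$ (resp. $_2\calU_{n-3,3}$) lying in the closed orbit $Q(2)$, hence in the closure of every $\calG_k$-orbit.
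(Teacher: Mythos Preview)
Your proposal is correct and follows essentially the same approach as the paper: the paper's proof is a one-line sketch noting that, since ${\rm M}^{\rm spin}$ is topologically flat (so $|{\rm M}^{\rm spin}|=|{\rm M}^{\rm loc}|$), one can repeat the $\calG$-translates argument of Theorems \ref{MSemStable} and \ref{thm_cover_odd}. You have simply unpacked this sketch carefully, including the reducedness observation needed to pass from Proposition \ref{Prop 7.2} to scheme-theoretic containment, and your alternative of rerunning the orbit argument directly is exactly what the paper means by ``repeat the argument.''
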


From Theorems \ref{MSemStable},  \ref{thm_cover_odd}  and Proposition \ref{Prop 7.3}, we deduce that

\begin{Theorem}\label{Thm 7.4}
For the signatures $(n-s,s)$ with $s\leq 3$, we have $\mathcal{M}^{\rm spin}={\rm M}^{\rm spl}$.
\end{Theorem}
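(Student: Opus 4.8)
The plan is to establish the chain of inclusions ${\rm M}^{\rm spl}\subseteq \mathcal{M}^{\rm spin}\subseteq \mathcal{M}$ together with a matching statement at the level of the affine charts, and then upgrade from charts to the whole schemes using the $\mathcal{G}$-translate covering results already proved. First I would recall that we always have ${\rm M}^{\rm spl}\subseteq \mathcal{M}^{\rm spin}$ as closed subschemes (this is the content of the discussion preceding the theorem, following \cite[\S 7]{PR} and \cite[\S 9]{RSZ}): the splitting model is the preimage $\tau^{-1}({\rm M}^{\rm loc})$ and ${\rm M}^{\rm loc}\subseteq {\rm M}^{\rm spin}$. So the only thing to prove is the reverse inclusion $\mathcal{M}^{\rm spin}\subseteq {\rm M}^{\rm spl}$ for $s\le 3$. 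Since both are closed in $\mathcal{M}$, it suffices to show $\mathcal{M}^{\rm spin}$ is flat over $O_F$ and has the same generic fiber as ${\rm M}^{\rm spl}$; the generic fibers already agree, so the real point is $O_F$-flatness of $\mathcal{M}^{\rm spin}$, equivalently that $\mathcal{M}^{\rm spin}$ has no component supported in the special fiber.

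The key step is to combine Proposition \ref{Prop 7.2} and Proposition \ref{Prop 7.3}. For $s=2$, Proposition \ref{Prop 7.2}(a) gives ${}_1\mathcal{U}_{n-2,2}\subseteq \mathcal{M}^{\rm spin}$, and by Proposition \ref{SemiStabCase1} the chart ${}_1\mathcal{U}_{n-2,2}$ has semi-stable (hence flat) reduction over $O_F$, so it lies in the flat closure ${\rm M}^{\rm spl}$. Proposition \ref{Prop 7.3}(a) then says the $\mathcal{G}$-translates of ${}_1\mathcal{U}_{n-2,2}$ cover $\mathcal{M}^{\rm spin}$; since $\mathcal{G}$ acts on both $\mathcal{M}^{\rm spin}$ and ${\rm M}^{\rm spl}$ and the embedding ${}_1\mathcal{U}_{n-2,2}\hookrightarrow {\rm M}^{\rm spl}$ is $\mathcal{G}$-equivariant on translates, we conclude $\mathcal{M}^{\rm spin}=\bigcup_{g\in\mathcal{G}} g\cdot {}_1\mathcal{U}_{n-2,2}\subseteq {\rm M}^{\rm spl}$, giving equality. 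The case $s=3$ is identical with ${}_2\mathcal{U}_{n-3,3}$ in place of ${}_1\mathcal{U}_{n-2,2}$, using Proposition \ref{Prop 7.2}(b), Proposition \ref{Prop s=3}, and Proposition \ref{Prop 7.3}(b). For $s=1$, the chart ${}_2\mathcal{U}_{n-1,1}$ is smooth by Theorem \ref{Prop.s=1} and, as noted in \S\ref{Odd_s}, its $\mathcal{G}$-translates cover ${\rm M}^{\rm spl}$; the same translates cover $\mathcal{M}^{\rm spin}$ (the argument of Proposition \ref{Prop 7.3}, or directly: $\mathcal{M}^{\rm spin}$ is squeezed between ${\rm M}^{\rm spl}$ and $\mathcal{M}$ and ${\rm M}^{\rm spin}$ is topologically flat by \cite[Theorem 1.3]{Sm2}), so again $\mathcal{M}^{\rm spin}={\rm M}^{\rm spl}$.

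I expect the main subtlety to be bookkeeping rather than a genuine obstacle: one must make sure that the $\mathcal{G}$-translate covering arguments of Theorems \ref{MSemStable} and \ref{thm_cover_odd}, which were stated for ${\rm M}^{\rm spl}$, genuinely go through verbatim for $\mathcal{M}^{\rm spin}$ — this is exactly where the topological flatness of ${\rm M}^{\rm spin}$ (so that $\mathcal{M}^{\rm spin}$ and ${\rm M}^{\rm spl}$ have the same image in ${\rm M}^{\wedge}$, namely the topological space of ${\rm M}^{\rm loc}$, and in particular the same reduction to the worst point $\tau^{-1}(\text{closed orbit})$) is used, and it is the content of Proposition \ref{Prop 7.3}. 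Granting Proposition \ref{Prop 7.3}, the deduction of Theorem \ref{Thm 7.4} is then the short formal argument above: flat chart inside $\mathcal{M}^{\rm spin}$, $\mathcal{G}$-translates cover, hence $\mathcal{M}^{\rm spin}\subseteq {\rm M}^{\rm spl}$, and the reverse inclusion is automatic.
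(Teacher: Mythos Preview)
Your argument is correct and follows exactly the paper's approach: the theorem is deduced by combining the inclusion ${\rm M}^{\rm spl}\subseteq\mathcal{M}^{\rm spin}$ with the fact that the flat affine charts ${}_1\mathcal{U}_{n-2,2}$, ${}_2\mathcal{U}_{n-1,1}$, ${}_2\mathcal{U}_{n-3,3}$ lie in $\mathcal{M}^{\rm spin}$ (Proposition~\ref{Prop 7.2}) and that their $\mathcal{G}$-translates cover $\mathcal{M}^{\rm spin}$ (Proposition~\ref{Prop 7.3}, together with the $s=1$ discussion in \S\ref{Odd_s}). Your identification of topological flatness of ${\rm M}^{\rm spin}$ as the input needed to transfer the covering argument from ${\rm M}^{\rm spl}$ to $\mathcal{M}^{\rm spin}$ is also exactly what the paper uses.
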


Therefore, we obtain a moduli-theoretic description of ${\rm M}^{\rm spl}$ and by Theorem \ref{RegLM} for the corresponding integral model $\mathcal{A}^{\rm spl}_{\mathbf{K}}$. In particular, the closed subscheme $\mathcal{A}^{\rm spl}_{\mathbf{K}} \subset \mathcal{A}_{\mathbf{K}}$ (see \S\ref{Shimura} for the explicit definition of $\mathcal{A}_{\mathbf{K}}$) is obtained by imposing the following additional condition on the moduli problem of $\mathcal{A}_{\mathbf{K}}$:

\medskip

\noindent\emph{Spin condition.} The rank of $(\iota(\pi) + \pi)$ on ${\rm Fil}^0(A)$ has the same parity as~$s$.


\begin{Remark}
{\rm 
Recall the $\calG$-equivariant morphism $\tau: {\rm M}^{\rm spl}\rightarrow {\rm M}^{\rm loc}\otimes_O O_F$, which is given by $(\calF_0, \calF_1)\mapsto \calF_1$. By the above remark we have $\mathcal{M}^{\rm spin}={\rm M}^{\rm spl}$ for the signatures $(n-s,s)$ with $s\leq 3$. Define $\tau_s: {\rm M}^{\rm spl}\otimes k\rightarrow {\rm M}^{\rm loc}\otimes k$ over the special fiber. For a point $ (\mathcal{F}_0, \mathcal{F}_1) \in {\rm M}^{\rm spl}\otimes k$, we have $(0) \subset t \mathcal{F}_1\subset  \mathcal{F}_0$ and $t\mathcal{F}_0 = (0)$ which gives $(0)\subset \mathcal{F}_0 \subset  t\Lambda_m \otimes k$. Also, we have 
\[
\begin{array}{ll}
\mathcal{F}_1 \subset \mathcal{F}_0^{\bot}, &
t\Lambda_m\otimes k \subset \mathcal{F}_0^{\bot},\\
\mathcal{F}_1 \subset t^{-1}(\mathcal{F}_0), &
t\Lambda_m\otimes k \subset t^{-1}(\mathcal{F}_0).
\end{array}
\]
The spaces $t^{-1}(\mathcal{F}_0) , \,\mathcal{F}_0^{\bot}$ have rank $n + s$, $2n-s$ respectively and we indicate with $ \bot$ the orthogonal complement with respect to $(\,,\,)$. From above, we obtain that
\begin{equation}\label{eq 701}
	\calF_1\subset \calF_1+t\Lambda_m\otimes k \subset \mathcal{F}_0^{\bot}\cap t^{-1}(\mathcal{F}_0)\subset t^{-1}(\mathcal{F}_0).
\end{equation}

A) When the signature $(r,s)=(n-2,2)$, we have a morphism $\pi: {\rm M}^{\rm spl}\otimes k\rightarrow \Gr(2,n)\otimes k$ given by $(\calF_0, \calF_1)\mapsto \calF_0$. Let $M_1=\tau_s^{-1}(t\Lambda_m\otimes k)$, and $M_2=\pi^{-1}(\mathcal{Q}(2,n))$. Recall that $M_1$ is isomorphic to the Grassmannian $\Gr(2,n)\otimes k$ of dimension $2n-4$. 
The spin condition translates to: rank($t\calF_1)=$ even. Since $0\subset t\calF_1\subset \calF_0$, we get rank$(t\calF_1)=0$ or $2$, i.e., $t\calF_1=0$ or $t\calF_1=\calF_0$.

Furthermore, from $t\calF_1=0$ we get $\calF_1\subset t\Lambda_m\otimes k$ which implies $\calF_1=t\Lambda_m\otimes k$, so $(\calF_0, \calF_1)\in M_1$. On the other hand, if $t\calF_1=\calF_0$ we have rank$(t(\calF_1+t\Lambda_m\otimes k))=$ rank $(t\calF_1)=2$. Thus, rank$(\calF_1+t\Lambda_m\otimes k)\geq n+2$. By rank$(t^{-1}(\calF_0))=n+2$ and (\ref{eq 701}), we get $\calF_1+t\Lambda_m\otimes k = \mathcal{F}_0^{\bot}\cap t^{-1}(\mathcal{F}_0)= t^{-1}(\mathcal{F}_0)$, which implies $ t^{-1}(\mathcal{F}_0) \subset \mathcal{F}_0^{\bot}$. As in \cite[\S 2.1]{Zac1}, observe that $ \mathcal{F}_0 \in \mathcal{Q}(2,n)$, i.e. $ \langle \mathcal{F}_0,\mathcal{F}_0 \rangle' = 0$, is equivalent to $\text{rank} \, (t^{-1}(\mathcal{F}_0) \cap \mathcal{F}^{\bot}_0) = n+2 $, i.e. $ t^{-1}(\mathcal{F}_0) \subset \mathcal{F}_0^{\bot}$. So $(\calF_0, \calF_1)\in M_2$. 

Therefore, ${\rm M}^{\rm spl}\otimes k  = M_1 \cup M_2$. Moreover, from the above we can easily see that $\tau': M_2\setminus M_1\simeq {\rm M}^{\rm loc} \setminus \{t\Lambda_m\otimes k\}$, so that $M_2$ maps birationally to the special fiber of ${\rm M}^{\rm loc}$. 

Lastly, the fiber $\pi^{-1}(\{\mathcal{F}_0\}) $ with $\mathcal{F}_0 \in \mathcal{Q}(2,n) $ contains all $(\mathcal{F}_0, \mathcal{F}_1) $ with $\mathcal{F}_1$ satisfying $\mathcal{F}_0 \subset  (t^{-1}(\mathcal{F}_0))^{\bot} \subset \mathcal{F}_1 \subset  t^{-1}(\mathcal{F}_0).$ These $ \{\mathcal{F}_1\}$ correspond to isotropic, with respect to $( \,,\,)$, subspaces in the four dimensional space $t^{-1}(\mathcal{F}_0)/ (t^{-1}(\mathcal{F}_0))^{\bot}$ and they are parameterized by the orthogonal Grassmannian $ {\rm OGr}(2,4)$. Therefore, $M_2$ is a ${\rm OGr}(2,4)$-bundle over $ \mathcal{Q}(2,n)$ with dimension $2n-4$ and $M_1, M_2$ intersect transversally over the smooth scheme $\mathcal{Q}(2,n)$ of rank $2n-5$.

B) When the signature $(r,s)=(n-3,3)$, 
the unique closed $\mathcal{G}$-orbit in ${\rm M}^{\rm loc}\otimes k$ is not the point $t\Lambda_m\otimes k$ but the orbit of $\Lambda'\otimes k$ which we denote by $O_{wt}$. Let 
\[
M_1=\tau_s^{-1}(O_{wt})=\{(\calF_0, \calF_1)\in {\rm M}^{\rm spl}\otimes k\mid \calF_1=g\cdot \Lambda' ~\text{for some}~ g\in\calG\}
\]
and
\[
M_2=\{(\calF_0, \calF_1)\in {\rm M}^{\rm spl}\otimes k\mid \calF_0 ~\text{is totally isotropic under}~ \bb \ ,\ \pp'\}.
\]
 The spin condition translates to rank$(t\calF_1)=$ odd. By $0\subset t\calF_1\subset \calF_0$, we get rank$(t\calF_1)=1$ or $3$. 

If rank$(t\calF_1)=3$, we have $t\calF_1=\calF_0$ which implies rank$(\calF_1+t\Lambda_m\otimes k)\geq n+3$. By (\ref{eq 701}), we get $\mathcal{F}_0^{\bot}\cap t^{-1}(\mathcal{F}_0)= t^{-1}(\mathcal{F}_0)$. Thus, $\calF_0$ is totally isotropic under $\bb \ ,\ \pp'$. Next, assume that rank$(t\calF_1)=1$ or in other words consider
\[\xy
	(-12,7)*+{\mathcal{F}_1 \cap t \Lambda_m};
	(-4.5,2.5)*+{\rotatebox{-45}{$\,\, \subset\,\,$}};
	(-4.5,10.5)*+{\rotatebox{45}{$\,\, \subset\,\,$}};
	(1,14)*+{t \Lambda_m};
	(0,0)*+{\mathcal{F}_1 };
	(4,2.5)*+{\rotatebox{45}{$\,\, \subset\,\,$}};
	(6,10.5)*+{\rotatebox{-45}{$\,\, \subset\,\,$}};
	(14,7)*+{ \mathcal{F}_1 + t \Lambda_m };
	\endxy    
	\]
where the quotients arising from all slanted inclusions are finitely generated projective $k$-modules of rank 1. We can easily see that this holds for $ \mathcal{F}_1 = \Lambda'\otimes k$. Since $ t \Lambda_m$ is fixed by the action of $ \calG$ and the $ \calG$-orbit $ O_{wt}$ is closed, we can deduce that rank$(t\calF_1)=1$ if and only if $\mathcal{F}_1 \in O_{wt}$. Therefore, as in (A), we get ${\rm M}^{\rm spl}\otimes k  = M_1 \cup M_2$, where $M_2$ maps birationally to the special fiber of ${\rm M}^{\rm loc}$.

}
\end{Remark}
\smallskip
$\quad$\\
{\bf Acknowledgements:} We thank G. Pappas for several useful comments on a preliminary version of this article. We also thank the referees for their useful suggestions. 

\smallskip
$\quad$\\
{\bf Conflicts of interest:} None. 

\smallskip

$\quad$\\
{\bf Financial support:} The first author was supported by CRC 1442 Geometry: Deformations and Rigidity and the Excellence Cluster of the Universit\"{a}t M\"{u}nster.

\Addresses
\end{document}